\titleformat {\subsection}[runin]
  {\normalfont  \bfseries \raggedright}{\thesubsection}{1em}{}
\titleformat {\subsubsection}[runin]
  {\normalfont  \bfseries \raggedright}{\thesubsubsection}{1em}{}
\renewcommand*{\@cite@ofmt}{\bfseries\hbox}
\g@addto@macro\bfseries{\boldmath}
\numberwithin{equation}{section}
\newtheorem{theorem}{Theorem}[section]
\newtheorem{proposition}[theorem]{Proposition}
\newtheorem{definition}[theorem]{Definition}
\newtheorem{corollary}[theorem]{Corollary}
\newtheorem{lemma}[theorem]{Lemma}
\theoremstyle{remark}
\newtheorem{remark}[theorem]{\bf Remark}
\newtheorem{example}[theorem]{\bf Example}
\newcommand{\SSS}{\mathbb S}
\newcommand{\HHH}{\mathbb H}
\newcommand{\real}{\mathbb R}
\newcommand{\C}{\mathbb C}
\newcommand{\bdm}{\begin{displaymath}}
\newcommand{\edm}{\end{displaymath}}
\newcommand{\beq}{\begin{equation}}
\newcommand{\beqa}{\begin{eqnarray}}
\newcommand{\beqas}{\begin{eqnarray*}}
\newcommand{\eeq}{\end{equation}}
\newcommand{\eeqa}{\end{eqnarray}}
\newcommand{\eeqas}{\end{eqnarray*}}
\newcommand{\dd}{\textup{d}}
\newcommand{\bbar}{\left( \begin{array}}
\newcommand{\ebar}{\end{array} \right)}
\begin{document}
\title[The Bj\"orling problem for Willmore surfaces]{\bf{ On the Bj\"{o}rling problem for Willmore surfaces}}
\author{David Brander, Peng Wang }

\maketitle

\begin{abstract}

 We solve the analogue of Bj\"orling's problem for Willmore surfaces via a harmonic map representation.
For the umbilic-free case the problem and solution are as follows:
given a real analytic curve $y_0$ in $\SSS^3$, together with the prescription
of the values of the surface normal and the dual Willmore surface along the curve,
lifted to the light cone in Minkowski $5$-space $\real^5_1$, we prove,
using isotropic harmonic maps, that there
exists a unique pair of dual Willmore surfaces $y$ and $\hat y$ satisfying the
given values along the curve.  We give explicit formulae for the generalized
Weierstrass data for the surface pair. For the three dimensional
target, we use the solution to explicitly
describe the Weierstrass data, in terms of geometric quantities, for all equivariant Willmore surfaces.
For the case that the surface has umbilic points,
we apply the more general half-isotropic harmonic maps introduced by H\'{e}lein
 to derive a solution: in this case the map $\hat y$ is not necessarily the dual surface, and the
additional data of a derivative of $\hat y$ must be prescribed. This solution is  generalized to higher
codimensions. 
\end{abstract}


\renewcommand{\theenumi}{(\roman{enumi})}
\renewcommand{\labelenumi}{\theenumi}

\section{Introduction}
A \emph{Willmore surface} in Euclidean 3-space $\real^3$ is an immersion $S$ that is
locally critical for the \emph{Willmore functional}
\[
\mathcal{W}(S) = \int_S H^2 \dd A,
\]
where $H$ is the mean curvature of the surface. As such, these surfaces
 are generalizations of
minimal surfaces, and also, from another point of view, of elastic curves.
Hence the interest in Willmore surfaces, which have attracted
a lot of attention in recent decades. The governing equations are a fourth order nonlinear PDE, and they are therefore a challenging class
of surfaces to get information about: for example, the Willmore conjecture,
 that the Clifford torus is the
global minimizer of the Willmore energy among  tori,
 proposed in the 1960's, took more than half a century to resolve \cite{MN}.

 The property of being a Willmore surface
is invariant under conformal transformations of the ambient space.  Hence, from a theoretical point of view, the choice of conformally congruent target space is
unimportant.  In fact the natural choice is the $3$-sphere $\SSS^3$, because
this case includes, up to M\"obius equivalence,
 both $\real^3$ and the hyperbolic space ${\mathbb H}^3$
as proper subspaces. In this article, we  generally regard the surfaces as
living in $\SSS^3$, and more generally $\SSS^n$, $n\geq 3$.
For further introduction and background
on Willmore surfaces, especially relevant to this article, see H\'{e}lein \cite{Helein}.

Being one kind of generalization of minimal surfaces, it is natural to consider
the extension of Bj\"orling's classical problem to Willmore surfaces.
Bj\"orling's problem is to find the unique minimal surface that contains a
given curve with surface normal prescribed along the curve.  The solution
can be found, in terms of the Weierstrass-Enneper representation, via analytic extension of the  prescribed data. It is a useful
tool in the study of minimal surfaces and has been generalized recently,
through various means, to several other surface classes.  An approach that can be expected to be fruitful among surfaces associated
to harmonic maps can be found in the solution for non-minimal constant
mean curvature surfaces given in \cite{Br-Do}. Here one uses an infinite
dimensional version of the Weierstrass-Enneper formula,
the DPW method of Dorfmeister/Pedit/Wu \cite{DPW}, to again obtain the
solution by holomorphic extension.

For Willmore surfaces, there are more than one type of harmonic map one might
consider employing.
For example, it has long been known that the conformal Gauss map into the Grassmannian
$Gr_{3,1}(\mathbb{R}^{5}_{1})$ of Lorentzian $4$-planes in $\real^5_1$ is harmonic.
This is
a certain lift of the surface normal into  $\real^5_1$,
and the harmonicity of this map  has been used in \cite{DoWa1} to study Willmore surfaces
via the DPW method. The related flat connections also form  the basis for
 some of the
recent works on \emph{constrained} Willmore surfaces: see, e.g.
\cite{bq, bohle, flpp, heller2014}.

On the other hand, a different (``roughly'')
 harmonic map, this time into $SO(1,4)/(SO(1,1) \times SO(3))$ was
found by H\'elein in \cite{Helein} (See also \cite{Helein2}).  In our distillation of H\'elein's
work, the basic object is the
map $Y\wedge \hat Y$, where $Y$ and $\hat Y$ are the surface and its
dual, lifted to the light cone.  Essentially, the projections of
$Y$ and $\hat Y$ are Willmore if and only if $Y\wedge \hat Y$
is what we call an \emph{isotropic} harmonic map. The DPW method also works for isotropic harmonic maps, and this is the approach we will use.

\begin{figure}[ht]
\centering
$
\begin{array}{ccc}
\includegraphics[height=28mm]{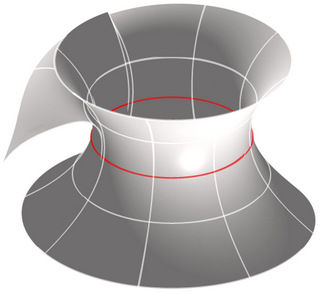} \quad & \quad
\includegraphics[height=28mm]{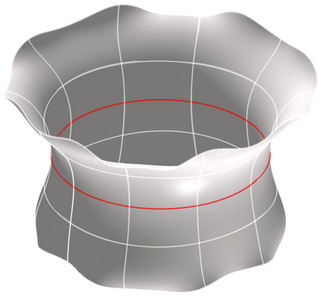} \quad & \quad
\includegraphics[height=28mm]{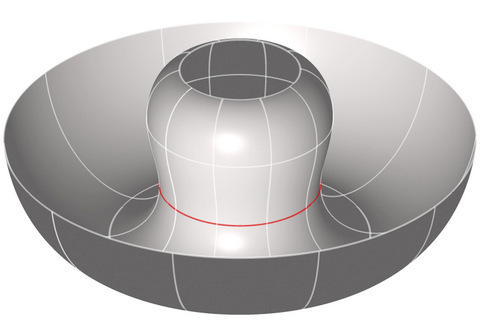}
\end{array}
$
\caption{Three solutions to the Bj\"orling problem for Willmore surfaces in $\SSS^3$,
all with the same initial curve (a circle) and the same  normal along the curve. The
prescribed dual surface data $\hat Y_0$ is different in each case. The surfaces are all given
the same stereographic projection to $\real^3$. }
\label{figure1}
\end{figure}

\subsection{Results of this article}
If only the surface and surface normal are prescribed along a curve, then there
is no hope of obtaining a unique solution for the Bj\"orling problem for Willmore
surfaces (see Figures \ref{figure1} and \ref{figureSOR1}).
One needs to prescribe something more, and it turns out that
the value along the curve of the dual surface $\hat Y$ is enough. Hence, the representation
in terms of $Y \wedge \hat Y$ seems canonical for this problem, rather than the conformal Gauss map representation.

In Section \ref{section2}, we outline the projective light cone model for conformal surface
theory, the basic theory of Willmore surfaces in this setting, and the relation with
isotropic harmonic maps into $SO(1,4)/(SO(1,1) \times SO(3))$.
In Section \ref{section3} we derive the DPW construction for isotropic harmonic
maps.  The DPW construction for harmonic maps $f: \Sigma \to G/K$ makes use of
a \emph{holomorphic frame} $F_-^\lambda$ for the \emph{extended frame}  $F^\lambda: \Sigma \to \Omega G \cong \Lambda G^\C/\Lambda^+G^\C$, a lift of $f$ into the group of based loops
in $G$.  The Maurer-Cartan form $\eta$ of $F_-^\lambda$ is known as a \emph{potential},
and this is the Weierstrass data for the problem. Given a potential
$\eta$, which essentially consists  of a series of arbitrary holomorphic functions,
the equation $\dd F_-^\lambda = F_-^\lambda \eta$ can be solved, and a frame
$F^\lambda: \Sigma \to \Lambda G$ is obtained via the Iwasawa decomposition.
If $G$ is non-compact, all of this happens only on a large open set (the big cell) of
the loop group, but otherwise the theory is the same.  We need to verify that
the theory restricts to \emph{isotropic} harmonic maps (see Definition \ref{isotropicdef}), and this is indeed
the case because the isotropic condition is preserved by the loop group decompositions.

\begin{figure}[ht]
\centering
$
\begin{array}{ccc}
\includegraphics[height=50mm]{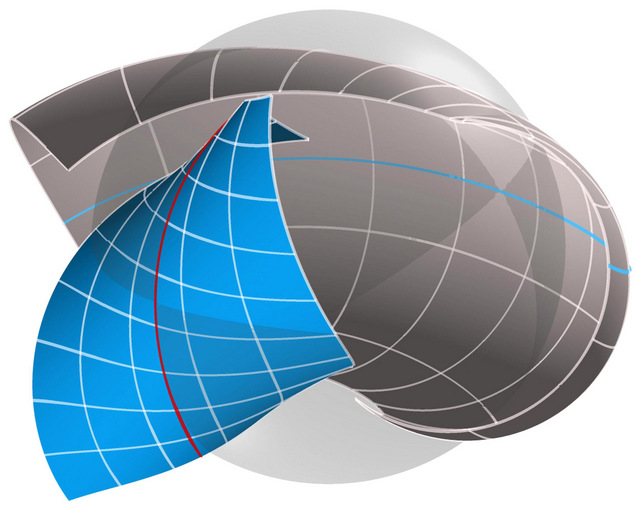} \,\, & \,\,
\end{array}
$
\caption{Dual solutions of Bj\"orling's problem for Willmore surfaces. The prescribed data is the pair of curves (one red, one blue) together with a
family of $2$-spheres tangent to both curves at the touching points. One
sphere is shown.}
\label{figuresc}
\end{figure}

In Section \ref{section4} we present, in Theorems \ref{thm-Bjorling} and
\ref{thm-Bjorling-BP}, a solution to the Bj\"orling problem
for Willmore surfaces: given a real analytic sphere congruence $\psi_0$ (a lift of the surface normal) along a curve $\mathbb I$, with two enveloping curves $Y_0$ and and $\hat Y_0$, there exists a
unique dual pair of Willmore surfaces $Y$ and $\hat Y$ that restrict, along $\mathbb I$,
to $Y_0$ and $\hat Y_0$ (Figure \ref{figuresc}).  We also give an explicit formula for a holomorphic
potential for the surface, in terms of the prescribed geometric data.

In Section \ref{equisection}, we apply this result to describe all $SO(4)-$\emph{equivariant} Willmore surfaces in $\SSS^3$, that is surfaces invariant under the action of a
$1$-parameter subgroup of the isometry group.
 Our approach is
to solve the Bj\"orling problem along a parallel.
One can describe all
 $SO(1,3)-$\emph{equivariant} Willmore surfaces in $\mathbb{H}^3$ in an
analogous way, and we  give the details for some of these, including
hyperbolic rotational surfaces and the hyperbolic analogue of Hopf surfaces in
Section \ref{hypequisection}.
We remark that it is known \cite{bg1986,ls1984} that Willmore surfaces of revolution in $\real^3$ can be obtained by revolving about the $x$-axis an elastic curve in $\HHH^2$, represented by the upper half plane model above the $x$-axis. General equivariant surfaces have not been described so explicitly,
however Ferus and Pedit
\cite{fp1990} gave a description of all non-rotational $SO(4)$-equivariant Willmore tori.

\begin{figure}[ht]
\centering
$
\begin{array}{ccc}
\includegraphics[height=40mm]{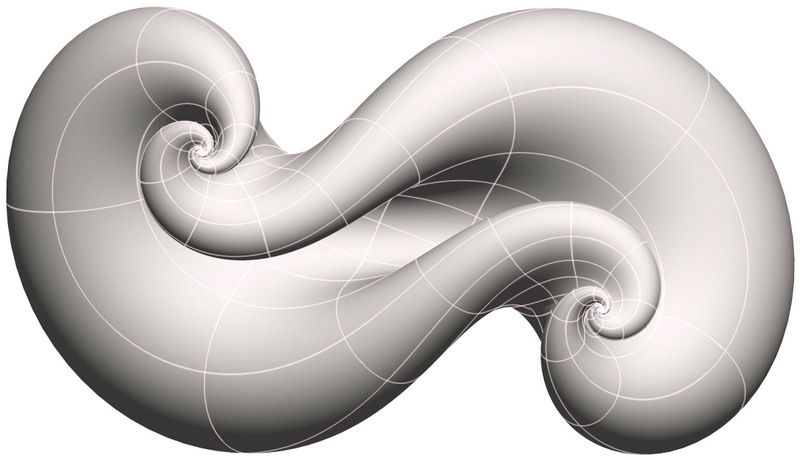} \,\, & \,\,
\end{array}
$
\caption{An $SO(1,3)$-equivariant Willmore surface not congruent to a minimal surface in any space form (Section \ref{sectionhrn0}).}
\label{figure2}
\end{figure}

 In Section \ref{section7} we extend the loop group representation to the case of isotropic
 and  \emph{half}-isotropic harmonic maps for general $n$. The half-isotropic case is a generalization
of the isotropic case where $\hat Y$ is no longer required to be the dual (or geometric adjoint transform)
of $Y$. This section is partly motivated
by the desire to give a uniform treatment of results of H\'elein \cite{Helein},
Xia/Shen \cite{Xia-Shen} and Ma \cite{Ma2006}, but it also allows us to deal with umbilics, which
are ruled out in the isotropic case.

We end this paper in  Section \ref{section8} with an application of the harmonic maps in Section
 \ref{section7}  to the solution of the Bj\"orling problem for Willmore surfaces in $\SSS^{n+2}$.
The half-isotropic setting is needed both for Willmore surfaces in $\SSS^3$ with umbilics and
for general Willmore surfaces in $\SSS^{n+2}$.  Since $\hat Y$ is no longer required to be the dual
of $Y$, there is now more freedom, and so an additional condition is needed to define a unique
solution.  The $v$ derivative, $\hat Y_v$ turns out to be sufficient. 

\subsection{Concluding remarks}
All the images in this article were produced by numerically implementing the DPW method for the problem at hand.  At the time of writing, some code is available at: http://davidbrander.org/software.html. In our examples, mainly working in the isotropic setting, the surfaces appear smooth when the boundary of the Iwasawa
big cell is approached.  One expects that these are
 points where the surface and its dual coincide,
such as can happen at umbilics (see Lemma \ref{umbiliclemma} below). Babich and
Bobenko \cite{bab-bob}, constructed Willmore surfaces which contain lines of
umbilics.  For such solutions, one needs to use the general construction of Section \ref{section8}.

Recently, Jensen, Musso and Nicolodi have provided a solution of the geometric Cauchy
problem for the more general membrane shape equation \cite{jmn2014}.  This equation includes Willmore surfaces as a special case.
Their solution, which needs an umbilic-free assumption, is quite different: the framework is differential systems, the problem is
posed in principal coordinates, the Cauchy data are the curve $y$, the mean curvature $h$ and the transverse derivative $h_v$
along the curve $y(u)$, plus the value of the normal at a single point. Because of these major differences, the range of applications
of their solution is fundamentally different - for example the description of all equivariant surfaces we provide here
does not seem feasible
with their formulation.

\section{Willmore surfaces in $\SSS^{n+2}$}  \label{section2}

\subsection{Conformal surface theory in the projective light cone model}

We will review first the projective light cone model of the conformal geometry of
$\SSS^{n+2}$ and derive the surface theory in this model. Then we formulate it at the Lie algebra level. Our treatment here follows the surface theory in \cite{BPP,Ma}.

We denote the Minkowski space $\mathbb{R}^{n+4}_1$ as $\mathbb{R}^{n+4}$ equipped with a Lorentzian metric
\[
\langle x,y\rangle =-x_{0}y_0+\sum_{j=1}^{n+3}x_jy_j=x^t I_{1,n+3} y,\ \ I_{1,n+3}=\textup{diag}(-1,1,\cdots,1).
\]
Let $\mathcal{C}_+^{n+3}$ be the forward light cone of $\mathbb{R}^{n+4}_{1}$, i.e. for any $x\in\mathcal{C}_+^{n+3}$, $x_0>0$. One can see that the projective light cone
\[
Q^{n+2}=\{\ [x]\in\mathbb{R}P^{n+3}\ |\ x\in \mathcal{C}_+^{n+3}
\},
\]
with the induced conformal metric, is conformally equivalent to $\SSS^{n+2}$, and the conformal group of
$Q^{n+2}$ is exactly the orthogonal group $O(1,n+3)/\{\pm1\}$ of
$\mathbb{R}^{n+4}_1$, acting on $Q^{n+2}$ by
$
T([x])=[Tx],\ T\in O(1,n+3).
$ We denote by $SO^+(1,n+3)$ the connected component of $O(1,n+3)$ containing $I$, that is for any $T\in SO^+(1,n+3)$, $\det T=1$ and $T$ preserves the signature of the first coordinate of any $x\in \mathbb{R}^{n+4}_1$ (i.e, it preserves the time direction).

Let $y:M^2\rightarrow \SSS^{n+2}$ be a conformal immersion from a Riemann surface $M$. Let $U\subset M$ be an open subset. A local
lift of $y$ is a map $Y:U\rightarrow \mathcal{C}_+^{n+3}$ such
that $\pi\circ Y=y$. Two different local lifts differ by a scaling,
so with conformal induced metrics. Here we call $y$ a conformal immersion, if $\langle Y_{z},Y_{z}\rangle=0$ and $\langle
Y_{z},Y_{\bar{z}}\rangle >0$ for any local lift $Y$ and any complex
coordinate $z$ on $M$. Then there is a decomposition $M\times
\mathbb{R}^{n+4}_{1}=V\oplus V^{\perp}$, where
\[
V={\rm Span}\{Y,{\rm Re}Y_{z},{\rm Im}Y_{z},Y_{z\bar{z}}\}
\]
is a Lorentzian rank-4 sub-bundle independent of the choice of $Y$
and $z$. Their
complexifications are denoted separately as $V_{\mathbb{C}}$ and
$V^{\perp}_{\mathbb{C}}$.

Fix a local coordinate $z$. There is a local lift $Y$ satisfying
$|{\rm d}Y|^2=|{\rm d}z|^2$,  called the canonical lift (with respect
to $z$). Choose a frame $\{Y,Y_{z},Y_{\bar{z}},N\}$ of
$V_{\mathbb{C}}$, where $N\in\Gamma(V)$ is uniquely determined by
\begin{equation}\label{eq-N}
\langle N,Y_{z}\rangle=\langle N,Y_{\bar{z}}\rangle=\langle
N,N\rangle=0,\langle N,Y\rangle=-1.
\end{equation}

Now we define \emph{the conformal Gauss map} of $y$ as follow. See
also \cite{Bryant1984,BPP,Ejiri1988,Ma}.

\begin{definition}
For a conformally immersed surface $y:M\to \SSS^{n+2}$ with canonical
lift $Y$ (with respect to a local coordinate $z$), we define
\[
G:=Y\wedge Y_{u}\wedge Y_{v}\wedge N=-2i\cdot Y\wedge Y_{z}\wedge
Y_{\bar{z}} \wedge N,\ z=u+iv,
\]
where $N\equiv 2Y_{z\bar{z}}(\!\!\mod Y)$ is the frame vector
determined in \eqref{eq-N}. It is direct to see that $G$ is well defined. We call $G:M\rightarrow
Gr_{3,1}(\mathbb{R}^{n+4}_{1})$ \emph{the conformal Gauss map} of
$y$.
\end{definition}

Given frames as above, and noting that $Y_{zz}$ is orthogonal to
$Y$, $Y_{z}$ and $Y_{\bar{z}}$, there exists a complex function $s$
and a section $\kappa\in \Gamma(V_{\mathbb{C}}^{\perp})$ such that
\[
Y_{zz}=-\frac{s}{2}Y+\kappa.
\]
This defines two basic invariants $\kappa$ and $s$ depending on
coordinates $z$, \emph{the conformal Hopf differential} and
\emph{the Schwarzian} of $y$ (for more discussion, see \cite{BPP,Ma}). Let $D$
denote the normal connection and $\psi\in
\Gamma(V_{\mathbb{C}}^{\perp})$ any section of the normal bundle.
The structure equations can be given as follows:
\[
\left\{\begin {array}{lllll}
Y_{zz}=-\frac{s}{2}Y+\kappa,\\
Y_{z\bar{z}}=-\langle \kappa,\bar\kappa\rangle Y+\frac{1}{2}N,\\
N_{z}=-2\langle \kappa,\bar\kappa\rangle Y_{z}-sY_{\bar{z}}+2D_{\bar{z}}\kappa,\\
\psi_{z}=D_{z}\psi+2\langle \psi,D_{\bar{z}}\kappa\rangle Y-2\langle
\psi,\kappa\rangle Y_{\bar{z}}.
\end {array}\right.
\]
The conformal Gauss, Codazzi and Ricci equations as integrable
conditions are:
\begin{equation}\label{eq-integ}
\left\{\begin {array}{lllll} \frac{1}{2}s_{\bar{z}}=3\langle
\kappa,D_z\bar\kappa\rangle +\langle D_z\kappa,\bar\kappa\rangle,\\
{\rm Im}(D_{\bar{z}}D_{\bar{z}}\kappa+\frac{\bar{s}}{2}\kappa)=0,\\
R^{D}_{\bar{z}z}=D_{\bar{z}}D_{z}\psi-D_{z}D_{\bar{z}}\psi =
2\langle \psi,\kappa\rangle\bar{\kappa}- 2\langle
\psi,\bar{\kappa}\rangle\kappa.
\end {array}\right.
\end{equation}

The conformal
Hopf differential plays an important role in the study of Willmore
surfaces. To see this, we first give the transformation formula of
$\kappa$. For another complex coordinate $w$,
$Y_1=Y\cdot|\frac{\dd w}{\dd z}|$ is the canonical lift with respect to $w$.
So the corresponding Hopf differential $\kappa_1$ with respect to
$(Y_1,w)$ is
\begin{equation}\label{eq-kappa-trans}
\kappa_1=\kappa\cdot\left(\frac{\dd z}{\dd w}\right)^2/\lvert\frac{\dd z}{\dd w}\rvert.
\end{equation}

 Direct computation using the structure equations above shows that $G$ induces a conformal-invariant
metric
\[
g:=\frac{1}{4}\langle {\rm d}G,{\rm d}G\rangle=\langle
\kappa,\bar{\kappa}\rangle|\dd z|^{2}
\]
on M. Note this metric degenerates at umibilic points of $y$.  We define the Willmore
functional and Willmore surfaces by use of this metric.

\begin{definition} \emph{The Willmore functional} of $y$ is
defined as the area of M with respect to the metric above:
\[
W(y):=2i\int_{M}\langle \kappa,\bar{\kappa}\rangle \dd z\wedge
\dd \bar{z}.
\]
An immersed surface $y:M\rightarrow \SSS^{n+2}$ is called a
\emph{Willmore surface} if it is a critical surface of the Willmore
functional with respect to any variation of the map $y:M\rightarrow
 \SSS^{n+2}$.
\end{definition}
It is direct to verify that
$W(y)$ is well-defined from the formula \eqref{eq-kappa-trans}. Willmore surfaces can be characterized as
follows
\cite{Bryant1984,BPP,Ejiri1988,Wang1998}:

\begin{theorem}\label{thm-willmore} For a conformal immersion $y:M\rightarrow  \SSS^{n+2}$, the following three conditions
are equivalent:
\begin{enumerate}
\item The immersion $y$ is Willmore.
\item The conformal Gauss map $G$ is a harmonic map into
$G_{3,1}(\mathbb{R}^{n+3}_{1})$.
\item The conformal Hopf differential $\kappa$ of $y$ satisfies the
following Willmore condition, which is stronger than the conformal
Codazzi equation \eqref{eq-integ}:
\[
D_{\bar{z}}D_{\bar{z}}\kappa+\frac{\bar{s}}{2}\kappa=0.
\]
\end{enumerate}
\end{theorem}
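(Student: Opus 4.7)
The proof plan is to establish (1) $\Leftrightarrow$ (3) by a direct variational computation, and (2) $\Leftrightarrow$ (3) by computing the tension field of $G$ in the given moving frame.

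For (1) $\Leftrightarrow$ (3), fix a local conformal coordinate $z$ and the associated canonical lift $Y$, so that $W(y)$ is represented locally as the integral of $\langle \kappa,\bar\kappa\rangle$ against $2i\,\dd z\wedge \dd\bar z$. First I would reduce the admissible variations to those parametrized by a compactly supported normal section $\psi\in \Gamma(V_{\C}^\perp)$, since tangential variations of $y$ merely reparametrize the map and variations along $Y$ are scalings of the lift, to which $W$ is insensitive. Next I would compute the linearized change $\delta \kappa$ from the defining relation $Y_{zz}=-\tfrac{s}{2}Y+\kappa$ and the variation $\delta Y = $ (tangential $+$ normal) component, keeping track of the fact that the canonical lift condition $|\dd Y|^2=|\dd z|^2$ imposes a normalization. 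After a short calculation using the structure equations, one obtains
\[
\delta W \;=\; 4i\int_M \bigl\langle \psi,\; D_{\bar z}D_{\bar z}\kappa+\tfrac{\bar s}{2}\kappa\bigr\rangle\,\dd z\wedge \dd\bar z + \text{c.c.},
\]
after two integrations by parts in $\bar z$ to move $D_{\bar z}$ off the variation; the Schwarzian term arises through the structure equation for $\psi_z$. Since $\psi$ is an arbitrary compactly supported normal section and the pairing on $V_\C^\perp$ is nondegenerate, the Euler–Lagrange equation is exactly (3). The main technical step here is organizing the $\psi_z,\psi_{\bar z}$ integrations to produce the covariant derivatives $D_{\bar z}D_{\bar z}$ acting on $\kappa$ without leaving leftover curvature terms; the Ricci equation in \eqref{eq-integ} is the book-keeping device that makes the $s$ and the second normal derivative pair correctly.

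For (2) $\Leftrightarrow$ (3), I would use that for a map $G$ into a Grassmannian inside a pseudo-Euclidean space, harmonicity is equivalent to the vanishing of the component of $G_{z\bar z}$ normal to the submanifold $Gr_{3,1}\subset \Lambda^4 \real^{n+4}_1$, which in the $4$-vector representation $G=Y\wedge Y_z\wedge Y_{\bar z}\wedge N$ amounts to showing that $G_{z\bar z}$ lies in $G$'s own tangent space at each point, i.e.\ contains no $V^\perp$-valued wedge component. Differentiating $G$ twice and substituting the structure equations to eliminate $Y_{zz}$, $Y_{z\bar z}$ and $N_z$ in favor of $\kappa$ and $s$, one finds that the $V^\perp$-valued part of $G_{z\bar z}$ is (after standard cancellations coming from the Codazzi equation)
\[
Y\wedge Y_z\wedge Y_{\bar z}\wedge \bigl(D_{\bar z}D_{\bar z}\kappa+\tfrac{\bar s}{2}\kappa\bigr)\;+\;\text{conjugate},
\]
which vanishes exactly when (3) holds. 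The bookkeeping is simplified by noting that terms along $Y,Y_z,Y_{\bar z},N$ themselves automatically lie tangent to the Grassmannian and so contribute nothing to the tension field.

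The main obstacle is the second computation: the tension field of $G$ naively produces many terms, and one must use both the Codazzi and Ricci equations of \eqref{eq-integ} to collapse them into the single expression $D_{\bar z}D_{\bar z}\kappa+\tfrac{\bar s}{2}\kappa$. Once both equivalences are established at the level of the Hopf differential, the theorem follows, with the Willmore condition in (3) serving as the bridge between the variational characterization (1) and the harmonicity of the conformal Gauss map (2).
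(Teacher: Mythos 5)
First, note that the paper offers no proof of Theorem \ref{thm-willmore}: it is quoted as a known result with references to Bryant, Burstall--Pedit--Pinkall, Ejiri and Wang, so there is no in-paper argument to compare against. Your two-pronged strategy (first variation of $W$ for (1)$\Leftrightarrow$(3), tension field of the conformal Gauss map for (2)$\Leftrightarrow$(3)) is exactly the route taken in those references, and the final formulae you quote are the standard correct ones.

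Two points in your sketch nevertheless need repair. In the variational step, admissible variations are \emph{real} normal fields, so the vanishing of $\langle\psi,\, D_{\bar z}D_{\bar z}\kappa+\tfrac{\bar s}{2}\kappa\rangle+\mathrm{c.c.}$ for all such $\psi$ yields only $\mathrm{Re}\bigl(D_{\bar z}D_{\bar z}\kappa+\tfrac{\bar s}{2}\kappa\bigr)=0$; to reach the full complex equation in (3) you must invoke the conformal Codazzi equation in \eqref{eq-integ}, which says precisely that the imaginary part vanishes identically for \emph{any} conformal immersion. That is the content of the phrase ``stronger than the conformal Codazzi equation'': the Euler--Lagrange equation supplies the real part, Codazzi the imaginary part, and your claim that the Euler--Lagrange equation ``is exactly (3)'' skips this step. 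Second, your harmonicity criterion for $G$ is inverted: for a map into a submanifold $Gr_{3,1}\subset\Lambda^4\real^{n+4}_1$, harmonicity is the vanishing of the \emph{tangential} projection of $G_{z\bar z}$, i.e.\ $G_{z\bar z}$ must be \emph{normal} to the Grassmannian, not lie in its tangent space. Since the tangent space at $G$ consists exactly of the $4$-vectors with three factors in $V$ and one in $V^{\perp}$, the component that must be killed is precisely the one you wrote down, $Y\wedge Y_z\wedge Y_{\bar z}\wedge(\cdots)$ with $(\cdots)\in V^{\perp}_{\mathbb{C}}$ --- so your formula is right, but for the opposite reason you give; likewise the terms proportional to $G$ itself are \emph{normal} to the Grassmannian (radial directions of the quadric), which is the correct reason they drop out of the tension field. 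With these two corrections your argument goes through and agrees with the cited literature.
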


In the seminal paper \cite{Bryant1984}, Bryant showed that every Willmore surface $Y$ in $\SSS^3$ admits a dual Willmore surface $\hat{Y}$, i.e., another map $\hat{Y}$, which may have
branch points or degenerate to a point, but, if immersed,  has the same complex coordinate and the same conformal Gauss map as $Y$. This duality theorem, however, does not hold in general when the codimension is bigger than $1$ (\cite{Ejiri1988}, \cite{BPP}, \cite{Ma2006}). To characterize  Willmore surfaces with dual surfaces, in \cite{Ejiri1988} Ejiri  introduced the  notion  of {\em S-Willmore surfaces}. Here we define it slightly differently to include all Willmore surfaces with dual surfaces:
\begin{definition}
A Willmore immersion $y:M^2\rightarrow \SSS^{n+2}$ is called an S-Willlmore surface if its conformal Hopf differential satisfies
\[
D_{\bar{z}}\kappa || \kappa,
\]
i.e. there exists some function $\mu$ on $M$ such that $D_{\bar{z}}\kappa+\frac{\mu}{2}\kappa=0$.
\end{definition}
A basic result of \cite{Ejiri1988} states that a Willmore surface admits a dual surface if and only if it is S-Willmore. Moreover the dual surface is also Willmore, when it is non-degenerate.

\begin{example}

1. It is well known that minimal surfaces in Riemannian space forms are Willmore surfaces (see \cite{Bryant1984,Kusner1989} for example). These surfaces give the basic examples of Willmore surfaces. Moreover, they are, in any codimension, S-Willmore surfaces, i.e., Willmore surfaces with a dual surface, see \cite{Ejiri1988,Ma}.

2. Using the Hopf bundle, Pinkall \cite{Pinkall1985} obtained a family of non-minimal Willmore surfaces in $\SSS^3$  via the elastic curves.

\end{example}

\subsection{Harmonic maps into $SO^+(1,4)/\left(SO^+(1,1)\times SO(3)\right)$ related to
Willmore surfaces}  \label{section-anotherharmonicmap}

In the classic paper \cite{Helein}, H\'{e}lein showed that there exists another family of flat connections associated with an umbilic free Willmore surface in $\SSS^3$, besides the one related to the conformal Gauss map. H\'elein's connections yield many ``roughly harmonic" maps $Y\wedge \hat{Y}$, that take values in
$SO^+(1,4)/\left(SO^+(1,1)\times SO(3)\right)$. Here $\hat{Y}$ is an arbitrary lightlike vector other than $Y$ in the mean curvature sphere $V$ of $Y$. Moreover, he found that if $\hat{Y}$ is chosen suitably (which yields a Riccati equation), the roughly harmonic map $Y\wedge \hat{Y}$ will be truly harmonic \cite{Helein}. A special choice is to set $\hat{Y}$ to be  the dual surface of $Y$ (\cite{Helein}, \cite{Helein2}). These results are generalized for Willmore surfaces in $\SSS^{n+2}$ in \cite{Xia-Shen}.

In a different approach Ma \cite{Ma2006} proved that a Willmore surface in $\SSS^{n+2}$ locally  always admits an adjoint transform (which in general may be non-unique). This is the generalization of the duality theorem of Willmore surfaces in $\SSS^3$. Furthermore, he found that a Willmore surface together with an adjoint transform, derives a new kind of harmonic map into $SO^+(1,n+3)/\left(SO^+(1,1)\times SO(n+2)\right)$, which turns out to be one of the harmonic maps found by H\'{e}lein \cite{Helein} and Qiaoling Xia, Yibing Shen \cite{Xia-Shen}.

To avoid burdening the reader who may be primarily concerned with the $\SSS^3$ case with unnecessary information, we will restrict ourselves, in this subsection and the sections
immediately following,
to Willmore surfaces in $\SSS^3$. The general case of $\SSS^{n+2}$  includes more possibilities, which we discuss in Section \ref{section7}.

Let $y:U\rightarrow \SSS^{3}$ be an umbilic free Willmore surface  with canonical lift $Y$ with respect to $z$ as above. We introduce $\hat{Y}$ as
\begin{equation}\label{eq-hat-Y-def}
\hat{Y}=N+ \bar\mu Y_{z}+ \mu Y_{\bar{z}}+ \frac{1}{2}|\mu|^2Y.
\end{equation}
with $\mu \dd z=2\langle\hat{Y},Y_z\rangle \dd z$ a complex connection 1-form.
Direct computation yields
\[
\hat{Y}_{z}=\frac{\mu}{2} \hat{Y}+\theta \left(Y_{\bar{z}}+\frac{\bar\mu}{2}Y\right)+\rho  \left(Y_{z}+\frac{\mu}{2}Y\right)+2\zeta
\]
with
\[
\theta:=
\mu_z- \frac{\mu^2}{2}-s,\quad  \rho:=\bar\mu_{z}-2\langle \kappa,\bar\kappa \rangle,
\quad \zeta:=D_{\bar{z}}\kappa+\frac{\bar{\mu}}{2}\kappa.
\]
Then
$\hat{Y}$ is the dual surface of $Y$ if and only if $D_{\bar{z}}\kappa+\frac{\bar{\mu}}{2}\kappa=0$ (\cite{Bryant1984}, \cite{Ejiri1988}, \cite{Ma}, \cite{Ma2006}). Note now the Willmore equation is equivalent to the Riccati equation
\begin{equation}\label{eq-h}
\mu_z-\frac{\mu^2}{2}-s=0.
\end{equation}

\begin{theorem}\cite{Helein}, \cite{Xia-Shen}, \cite{Ma2006}\ {\em (Harmonicity of another map)}
Let $Y$ be an  umbilic free Willmore surface in $\SSS^3$ with $\hat{Y}$ its dual surface. Set
\[
\begin{array}{cccc}
f_h: & U &\rightarrow & SO^{+}(1,n+3)/\left(SO^+(1,1)\times SO(n+2)\right) \\
    \ & p\in U&\mapsto & Y(p)\wedge \hat{Y}(p). \\
\end{array}
\]
Then  $f_h$ is a conformally harmonic map.
\end{theorem}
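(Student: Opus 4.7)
The plan is to verify harmonicity of $f_h = Y \wedge \hat Y$ via the Cartan-theoretic criterion for maps into the symmetric space $G/K$, where $G = SO^{+}(1,4)$ and $K = SO^{+}(1,1) \times SO(3)$. The target identifies with the set of ordered decompositions $\real^5_1 = L \oplus L^\perp$ into a signature-$(1,1)$ plane and a Riemannian $3$-plane, with $f_h$ corresponding to $L = \mathrm{span}\{Y, \hat Y\}$; the normalization $\langle Y, \hat Y \rangle = -1$ guarantees $L$ has the required signature. I would lift $f_h$ to a frame $F: U \to G$ whose first two columns form a basis of $L$ and whose remaining three columns are an orthonormal basis of $L^\perp$ — for instance the real and imaginary parts of $\kappa/|\kappa|$ (well-defined by umbilic-freeness) in $L^\perp \cap V$, together with a unit section of $V^\perp$.

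Next I would compute the Maurer--Cartan form $\alpha = F^{-1} dF$ using the structure equations for $Y_{zz}, Y_{z\bar z}, N_z$, together with \eqref{eq-hat-Y-def} and the displayed formula for $\hat Y_z$. The Cartan involution splits $\alpha = \alpha_\mathfrak{k} + \alpha_\mathfrak{m}$, where $\alpha_\mathfrak{k}$ is the block-diagonal part preserving $L$ and $L^\perp$ and $\alpha_\mathfrak{m}$ the off-diagonal part. Writing $\alpha_\mathfrak{m} = \alpha_\mathfrak{m}^{(1,0)} + \alpha_\mathfrak{m}^{(0,1)}$ by complex type, conformal harmonicity of $f_h$ is equivalent to the flatness of the loop $\alpha^\lambda = \lambda^{-1}\alpha_\mathfrak{m}^{(1,0)} + \alpha_\mathfrak{k} + \lambda \alpha_\mathfrak{m}^{(0,1)}$ for every $\lambda \in S^1$. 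The $\lambda^{\pm 2}$ coefficients of the curvature vanish by type (no $(2,0)$ or $(0,2)$-forms on a Riemann surface), the $\lambda^0$ coefficient is the $\mathfrak{k}$-component of the Maurer--Cartan equation for $F$, and so the entire harmonic condition reduces to the single equation $\bar\partial \alpha_\mathfrak{m}^{(1,0)} + [\alpha_\mathfrak{k}^{(0,1)}, \alpha_\mathfrak{m}^{(1,0)}] = 0$.

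The key step is then to expand this equation block by block using the structure equations. Decomposing $Y_z = -(\mu/2)Y + w$ with $w \in L^\perp \cap V$, and $\hat Y_z = (\mu/2)\hat Y + \theta\,\bar w + \rho\,w + 2\zeta$ with $\theta = \mu_z - \mu^2/2 - s$ and $\zeta = D_{\bar z}\kappa + (\bar\mu/2)\kappa \in V^\perp$, the harmonic equation splits into two independent obstructions: one in the $V^\perp$ direction that reduces to $\zeta = 0$, and one in the $L^\perp \cap V$ direction (after $\zeta = 0$ is imposed) that reduces to $\theta = 0$. Both vanish by hypothesis: the first because $\hat Y$ is the dual of $Y$ (S-Willmore), and the second because $Y$ is Willmore, which combined with duality is equivalent to the Riccati equation \eqref{eq-h}. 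Conformality of $f_h$ is immediate from $\langle Y_z, Y_z\rangle = 0$, which forces the Killing-form pairing $\langle \alpha_\mathfrak{m}^{(1,0)}, \alpha_\mathfrak{m}^{(1,0)}\rangle$ to vanish.

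The main obstacle is organizing the computation of $\alpha$ and of $\bar\partial \alpha_\mathfrak{m}^{(1,0)}$ cleanly enough that the two invariants $\zeta$ and $\theta$ emerge as distinct summands rather than entangled combinations. A convenient device is to carry out the computation in the complex basis $\{Y, \hat Y, Y_z, Y_{\bar z}, e\}$ (with $e$ a unit section of $V^\perp$), which matches the form of the structure equations directly, and pass to the real orthonormal frame underlying $F$ only at the end via a purely algebraic change of basis.
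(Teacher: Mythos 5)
The paper does not actually prove this theorem---it is imported from H\'elein, Xia--Shen and Ma---so the comparison has to be with the computations the paper does carry out (the Proposition immediately following, and the half-isotropic discussion of Section \ref{section7}). Your overall strategy (orthonormal frame adapted to $L=\mathrm{span}\{Y,\hat Y\}$, Maurer--Cartan form, the loop-group flatness criterion collapsing to $\bar\partial\alpha_{\mathfrak{p}}'+[\alpha_{\mathfrak{k}}''\wedge\alpha_{\mathfrak{p}}']=0$, then the structure equations) is the right one and can be pushed through. However, your identification of the obstructions is not what the computation produces. Writing $W=Y_z+\frac{\mu}{2}Y$, the $Y$-row of the harmonic map equation vanishes identically for \emph{any} lightlike section $\hat Y$ of $V$ with $\langle Y,\hat Y\rangle=-1$; the $\hat Y$-row splits into three scalar pieces: $\theta_{\bar z}=0$ along $\bar W$, an identity along $W$ that holds automatically by the conformal Gauss equation in \eqref{eq-integ}, and, along $V^{\perp}$, the equation $2\bigl(D_{\bar z}D_{\bar z}\kappa+\frac{\bar s}{2}\kappa\bigr)+\bar\theta\kappa+\theta\bar\kappa=0$. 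Thus the $V^{\perp}$ obstruction is the Willmore operator applied to $\kappa$ plus $\theta$-terms; it does not ``reduce to $\zeta=0$'', and harmonicity of $Y\wedge\hat Y$ never forces $\zeta=0$ --- the half-isotropic maps of Section \ref{section7} are harmonic with $\zeta\neq0$, needing only the Willmore and Riccati equations. Under your hypotheses all three pieces do vanish, so the implication survives, but the splitting you announce would not reproduce itself when you actually expand $\bar\partial\alpha_{\mathfrak p}'$.

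The more consequential gap is the last sentence on conformality. $\langle Y_z,Y_z\rangle=0$ alone does not give conformal harmonicity of $f_h$: as noted in Section \ref{section7}, the condition $B_1B_1^t=0$ is equivalent to the \emph{pair} $\langle Y_z,Y_z\rangle=\langle\hat Y_z,\hat Y_z\rangle=0$, and the second equals $\theta\rho+4\langle\zeta,\zeta\rangle$. (The weak trace condition $\mathrm{tr}(I_{1,1}B_1B_1^t)=0$ only detects $\theta$, because $\mathrm{tr}(I_{1,1}\mathbf{E})=0$; the full matrix condition is what is meant by conformal harmonicity here.) So the duality hypothesis $\zeta=0$ is precisely what buys conformality, while the Riccati/Willmore pair buys harmonicity --- essentially the reverse of the roles you assign them. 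You should rework the final step so that $\langle\hat Y_z,\hat Y_z\rangle=0$ is derived explicitly from $\zeta=0$ and $\theta=0$; with that correction the argument closes.
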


At umbilic points it is possible that there exists a limit of $\mu$ such that \eqref{eq-h} holds. Due to the following lemma, the harmonic map $f_h$ has no definition when $\mu$ tends to $\infty$.

\begin{lemma} \label{umbiliclemma}
\cite{DoWa1} At the umbilic points of $Y$,  the limit of $\mu$ goes to a finite number or infinity. When $\mu$ goes to infinity, $[\hat{Y}]$ tends to $[Y]$, and at the point in question
we have $[\hat{Y}]=[Y]$.
\end{lemma}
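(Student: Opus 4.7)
The plan is to derive both assertions from two ingredients: the explicit formula \eqref{eq-hat-Y-def} for $\hat Y$, and the Riccati equation \eqref{eq-h}, $\mu_z = \tfrac{1}{2}\mu^2 + s$, which $\mu$ satisfies on the umbilic-free part of $U$. The key observation is that this Riccati equation linearizes to a second-order linear ODE.

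For the dichotomy, I would introduce an auxiliary function $\phi$ via the substitution $\mu = -2\phi_z/\phi$. A direct computation converts \eqref{eq-h} into the linear holomorphic ODE
\[
\phi_{zz} + \tfrac{s}{2}\, \phi = 0,
\]
in which $\bar z$ enters only as an analytic parameter through $s$. Since Willmore surfaces are real analytic (they are solutions of a fourth-order elliptic PDE with analytic coefficients), $s$ is real analytic, and standard holomorphic ODE theory then yields a real analytic $\phi$ defined on a full neighborhood of the umbilic point $p_0$, with $\mu = -2\phi_z/\phi$ on the umbilic-free part. As $\mu$ is well defined away from $p_0$, we have $\phi\not\equiv 0$; hence either $\phi(p_0)\neq 0$, in which case $\mu$ has a finite limit at $p_0$, or $\phi(p_0)=0$, in which case $\phi$ vanishes to finite order and $|\mu|\to\infty$. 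This gives the claimed dichotomy.

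For the second claim, when $|\mu|\to\infty$ I would rescale \eqref{eq-hat-Y-def} by the positive factor $2/|\mu|^2$:
\[
\frac{2}{|\mu|^2}\hat Y \;=\; Y + \frac{2}{\mu}Y_z + \frac{2}{\bar\mu}Y_{\bar z} + \frac{2}{|\mu|^2}N.
\]
The frame $\{Y, Y_z, Y_{\bar z}, N\}$ of $V_{\C}$ is locally bounded near $p_0$, so each of the last three summands vanishes in the limit and the left side converges to $Y$. Hence $[\hat Y]\to[Y]$ in the projective light cone as $p\to p_0$, and by continuity we may define $[\hat Y](p_0):=[Y](p_0)$ at the umbilic.

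The main technical point I expect to encounter is controlling the two-dimensional limit of $\mu$ near $p_0$ when $\phi(p_0)=0$; this I would handle by noting that the zero locus $\{\phi=0\}$ is a real analytic variety of positive codimension and that $|\phi_z|/|\phi|$ blows up uniformly off this variety. Beyond this routine real-analytic bookkeeping the argument is purely algebraic: the linearization of the Riccati equation and a direct rescaling of $\hat Y$.
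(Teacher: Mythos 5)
The paper does not prove this lemma at all --- it is quoted from \cite{DoWa1} --- so there is no in-text argument to compare against; I will judge your proposal on its own terms. Your treatment of the second assertion is correct: multiplying \eqref{eq-hat-Y-def} by $2/|\mu|^2$ gives $Y+\frac{2}{\mu}Y_z+\frac{2}{\bar\mu}Y_{\bar z}+\frac{2}{|\mu|^2}N$, which tends to $Y$ when $|\mu|\to\infty$ since the frame is bounded, so $[\hat Y]\to[Y]$. Your linearization of the Riccati equation is also the right starting point, and it is worth making it concrete: the conjugated Willmore equation $\bar\kappa_{zz}+\frac{s}{2}\bar\kappa=0$ is exactly your linear ODE, and the duality condition $D_{\bar z}\kappa+\frac{\bar\mu}{2}\kappa=0$ says precisely $\mu=-2\bar\kappa_z/\bar\kappa$; so your $\phi$ is forced to be $\bar\kappa$ up to a nonvanishing factor depending only on $\bar z$. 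This identification also repairs the under-justified step where you claim ``standard holomorphic ODE theory'' produces $\phi$ on a full neighbourhood of $p_0$: just take $\phi=\bar\kappa$, which is globally defined.

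The genuine gap is in your dichotomy. Once $\phi$ is identified with $\bar\kappa$ (as it must be), we have $\phi(p_0)=0$ at \emph{every} umbilic point by the very definition of umbilic, so your criterion ``$\phi(p_0)=0\Rightarrow|\mu|\to\infty$'' would force $\mu$ to blow up at every umbilic --- contradicting the lemma itself, which allows a finite limit (and finite limits do occur, e.g.\ on the umbilic lines of the Babich--Bobenko surfaces). The underlying analytic claim is also false: $|\phi_z|/|\phi|$ does \emph{not} blow up uniformly off the zero variety of a real-analytic $\phi$; for instance $\phi=\bar z\,u(z,\bar z)$ with $u$ nonvanishing gives $\phi_z/\phi=u_z/u$, which is bounded. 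What actually needs to be proved --- and is the entire content of the first assertion --- is that $\bar\kappa_z/\bar\kappa$ has a limit in $\C\cup\{\infty\}$ at $p_0$, and this requires analyzing the structure of the zero of $\bar\kappa$ there, e.g.\ using that for each fixed $\bar z$ the function $z\mapsto\bar\kappa$ solves a second-order linear holomorphic ODE (hence vanishes to finite order in $z$ unless identically zero), together with a Weierstrass-type factorization in the two real variables. None of this ``bookkeeping'' is routine, and it is precisely where your argument stops.
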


In order to use the machinery of loop groups, we need to examine the structure of the Maurer-Cartan form
of a frame for $Y \wedge \hat Y$:
\begin{proposition}
Let  $f_h=Y\wedge\hat{Y}$ be a harmonic map, where $Y$ and $\hat Y$ are a
Willmore surface and its dual, as above. Chose a frame
\[
F=\left(\frac{1}{\sqrt{2}}(Y+\hat{Y}),\frac{1}{\sqrt{2}}(-Y+\hat{Y}),P_1,P_2,\psi\right):U\rightarrow SO^+(1,4)
\]
with $Y_{z}+\frac{\mu}{2} Y=\frac{1}{2}(P_1-iP_2)$, and $\psi$ a unit vector in the normal bundle $V^{\perp}$. Set $\kappa=k\psi.$
Then the Maurer-Cartan form $\alpha=F^{-1}\dd F=\alpha^\prime+\alpha^{\prime \prime}$ of $F$ is
\[
\alpha^\prime =\left(
                   \begin{array}{cc}
                     A_1 & B_1 \\
                     -B_1^tI_{1,1} & A_2 \\
                   \end{array}
                 \right)\dd z,
\]
with
\[
A_1=\left(
                      \begin{array}{cc}
                        0 & \frac{\mu}{2} \\
                        \frac{\mu}{2} & 0 \\
                      \end{array}
                    \right),\
 B_1=\left(
      \begin{array}{ccccccc}
                       \frac{1+\rho}{2\sqrt{2}} &  \frac{-i-i\rho}{2\sqrt{2}}  & 0\\
                        \frac{1-\rho}{2\sqrt{2}} &  \frac{-i+i\rho}{2\sqrt{2}} & 0 \\
      \end{array}
    \right)=\left(
              \begin{array}{c}
                b_1^t \\
                b_2^t \\
              \end{array}
            \right).
\]
So
 \[
 B_1B_1^t=0.
\]
\end{proposition}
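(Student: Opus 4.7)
The plan is a direct structure-equation computation in which the Willmore and duality hypotheses collapse the generic form of $F^{-1}\dd F$ to the one stated. First I would check the algebraic scaffolding. Using the identities $\langle Y,\hat Y\rangle=-1$, $\langle Y,Y\rangle=\langle\hat Y,\hat Y\rangle=0$, and $\langle Y_z+\tfrac{\mu}{2}Y,\,Y_{\bar z}+\tfrac{\bar\mu}{2}Y\rangle=\tfrac{1}{2}$, one checks that $\{e_1,e_2,P_1,P_2,\psi\}$ is pseudo-orthonormal of signature $(-,+,+,+,+)$, so $F\in SO^+(1,4)$. Consequently $\alpha'=F^{-1}\partial_z F\,\dd z$ takes values in $\mathfrak{so}(1,4)^{\mathbb C}$. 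In the $2+3$ block decomposition adapted to $SO^+(1,1)\times SO(3)$, the condition $X^tI_{1,4}+I_{1,4}X=0$ forces exactly the block form stated: $A_1$ has zero diagonal with equal off-diagonal entries, $A_2$ is skew, and the bottom-left block is $-B_1^tI_{1,1}$. It therefore suffices to determine the first two columns of $\alpha'$; the block $B_1$ is then forced by the off-diagonal constraint.

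Next I would invert the change of basis: $Y=\tfrac{1}{\sqrt 2}(e_1-e_2)$, $\hat Y=\tfrac{1}{\sqrt 2}(e_1+e_2)$, and $Y_z=\tfrac{1}{2}(P_1-iP_2)-\tfrac{\mu}{2\sqrt 2}(e_1-e_2)$. The key simplification comes from the hypotheses: since $Y$ is Willmore, the Riccati equation \eqref{eq-h} yields $\theta=\mu_z-\mu^2/2-s=0$, and since $\hat Y$ is the dual surface, $\zeta=D_{\bar z}\kappa+\tfrac{\bar\mu}{2}\kappa=0$. The formula for $\hat Y_z$ displayed just before the proposition then collapses to $\hat Y_z=\tfrac{\mu}{2\sqrt 2}(e_1+e_2)+\tfrac{\rho}{2}(P_1-iP_2)$. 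Forming $\partial_z e_1=\tfrac{1}{\sqrt 2}(Y_z+\hat Y_z)$ and $\partial_z e_2=\tfrac{1}{\sqrt 2}(-Y_z+\hat Y_z)$ and collecting terms produces
\[
\partial_z e_1=\tfrac{\mu}{2}\,e_2+\tfrac{1+\rho}{2\sqrt 2}(P_1-iP_2),\qquad \partial_z e_2=\tfrac{\mu}{2}\,e_1+\tfrac{\rho-1}{2\sqrt 2}(P_1-iP_2).
\]
Reading these two columns off yields precisely the stated $A_1$, and an entry-by-entry comparison with $-B_1^tI_{1,1}$ recovers the matrix $B_1$ of the proposition; its third column vanishes because $\psi$ does not appear in either $\partial_z e_j$.

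Finally, the isotropy $B_1B_1^t=0$ falls out of a single observation: both rows of $B_1$ are scalar multiples of the complex null vector $w=(1,-i,0)\in\mathbb C^3$, namely the first row equals $\tfrac{1+\rho}{2\sqrt 2}\,w$ and the second equals $\tfrac{1-\rho}{2\sqrt 2}\,w$. Since $w\cdot w=1+(-i)^2=0$ in the standard bilinear form on $\mathbb C^3$, every entry of $B_1B_1^t$ is a scalar multiple of $w\cdot w$ and hence vanishes. Computationally there is no serious obstacle, only the routine care needed when tracking signs through the change of basis and through the identification $C_1=-B_1^tI_{1,1}$ forced by $\mathfrak{so}(1,4)$. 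Conceptually, the point worth emphasizing is that the simultaneous vanishing of $\theta$ (Willmore) and $\zeta$ (duality) is precisely what confines the $(1,0)$-part of the off-diagonal block to the single null line $\mathbb C\cdot w$; this is the ``isotropic harmonic map'' condition of Definition \ref{isotropicdef} for $Y\wedge\hat Y$, and it is what drives the loop-group machinery of Section \ref{section3}.
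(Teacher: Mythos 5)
Your computation is correct and is exactly the argument the paper intends: the authors state the proposition as a direct consequence of the formula for $\hat Y_z$ derived just above it (with $\theta=\zeta=0$ from the Willmore and duality hypotheses), and your expansion of $\partial_z e_1$, $\partial_z e_2$ in the frame, together with the $\mathfrak{so}(1,4)$ block constraint and the observation that both rows of $B_1$ lie on the null line $\C\cdot(1,-i,0)$, reproduces precisely that verification. No gaps.
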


It is straightforward to see that this last condition  on $B_1$ is independent of the choice of frame $F$ for the harmonic map $f_h$.
Conversely, this condition is also sufficient to characterize Willmore surfaces:
 \begin{theorem}\cite{Helein}, \cite{Helein2}, \cite{Xia-Shen}, \cite{Ma2006}.
Let  $f:M\rightarrow SO^+(1,4)/(SO^+(1,1)\times SO(3))$ be a non-constant harmonic map satisfying $B_1B_1^t=0$. Then $Y$ and $\hat{Y}$ are a pair of dual (possibly degenerate) Willmore surfaces. Moreover, set \[
B_1=(b_1\ b_2)^t \hbox{ with } b_1,b_2\in\mathbb{C}^3.
\]
Then $Y$ is immersed at the points $(b_1^t+b_2^t)(\bar{b}_1+\bar{b}_2)>0$ and $\hat{Y}$ is immersed  at the points  $(b_1^t-b_2^t)(\bar{b}_1-\bar{b}_2)>0$.
\end{theorem}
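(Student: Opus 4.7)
The plan is to invert the computation of the preceding proposition: from a harmonic frame $F$ whose Maurer--Cartan form satisfies $B_1B_1^t=0$, I reconstruct the pair $(Y,\hat Y)$ via the first two columns of $F$ and verify that it is a dual pair of conformally immersed Willmore surfaces at the stated loci. First I set $Y=\tfrac{1}{\sqrt 2}(e_1-e_2)$ and $\hat Y=\tfrac{1}{\sqrt 2}(e_1+e_2)$, which are light-like with $\langle Y,\hat Y\rangle=-1$. Reading off the first two columns of $\dd F=F\alpha$ for the $\dd z$-part gives
\[
Y_z=-\tfrac{\mu}{2}Y+\tfrac{1}{\sqrt 2}\bigl((b_1+b_2)_1P_1+(b_1+b_2)_2P_2+(b_1+b_2)_3\psi\bigr),
\]
\[
\hat Y_z=\tfrac{\mu}{2}\hat Y+\tfrac{1}{\sqrt 2}\bigl((b_1-b_2)_1P_1+(b_1-b_2)_2P_2+(b_1-b_2)_3\psi\bigr),
\]
and analogous expressions for $Y_{\bar z},\hat Y_{\bar z}$ from $\alpha''$. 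The assumption $B_1B_1^t=0$ is equivalent to $b_i^tb_j=0$ for all $i,j\in\{1,2\}$, hence $(b_1\pm b_2)^t(b_1\pm b_2)=0$, and the orthonormality of $P_1,P_2,\psi$ then yields $\langle Y_z,Y_z\rangle=\langle\hat Y_z,\hat Y_z\rangle=0$, so both $Y$ and $\hat Y$ are weakly conformal. The metric factors $\langle Y_z,Y_{\bar z}\rangle=\tfrac12(b_1+b_2)^t(\bar b_1+\bar b_2)$ and $\langle\hat Y_z,\hat Y_{\bar z}\rangle=\tfrac12(b_1-b_2)^t(\bar b_1-\bar b_2)$ then give the stated immersion conditions.

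To recognise $\hat Y$ as a dual of $Y$, I would argue that at immersion points the mean curvature sphere $V=\mathrm{span}\{Y,Y_z,Y_{\bar z},N\}$ coincides with $\mathrm{span}\{e_1,e_2,P_1,P_2\}$, so that $\hat Y\in V$; the computation $2\langle\hat Y,Y_z\rangle=\mu$ then matches the parameter in $A_1$ with the adjoint-transform parameter in \eqref{eq-hat-Y-def}, yielding $\hat Y=N+\bar\mu Y_z+\mu Y_{\bar z}+\tfrac{|\mu|^2}{2}Y$. The Willmore property is extracted from harmonicity: writing $\alpha=\alpha_{\mathfrak{k}}+\alpha_{\mathfrak{p}}$ for the symmetric-space decomposition of $\mathfrak{so}(1,4)$, the map $f$ is harmonic iff the loop $\alpha^\lambda=\lambda^{-1}\alpha'_{\mathfrak{p}}+\alpha_{\mathfrak{k}}+\lambda\alpha''_{\mathfrak{p}}$ is flat for all $\lambda\in\SSS^1$. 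Expanding $\dd\alpha^\lambda+\alpha^\lambda\wedge\alpha^\lambda=0$ block-by-block and coupling with $B_1B_1^t=0$, the $\mathfrak{p}$-component produces the Riccati identity $\mu_z-\mu^2/2-s=0$ together with the S-Willmore relation $D_{\bar z}\kappa+\tfrac{\bar\mu}{2}\kappa=0$, which by Theorem~\ref{thm-willmore} say precisely that $Y$ is Willmore and $\hat Y$ its dual.

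The main obstacle I anticipate is this last step: matching the abstract Lie-algebra components of the flatness equation to the intrinsic surface invariants $s$ and $\kappa$, and disentangling the Riccati (Willmore) and S-Willmore (duality) consequences of harmonicity. The isotropy $B_1B_1^t=0$ is essential here — in codimension one it forces the two rows of $B_1$ to be proportional to a single null direction in $\mathbb C^3$, and this is exactly what singles out a canonical adjoint parameter $\mu$ for which the adjoint transform of $Y$ collapses to the classical Bryant dual surface.
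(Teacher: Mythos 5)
The paper itself gives no proof of this theorem --- it is quoted from H\'elein, Xia--Shen and Ma --- so there is no in-paper argument to compare against; what follows is an assessment of your reconstruction on its own terms. The first half is correct and complete: reading $Y_z$ and $\hat Y_z$ off the columns of $\dd F=F\alpha$, using $(B_1B_1^t)_{ij}=b_i^tb_j$ to get $(b_1\pm b_2)^t(b_1\pm b_2)=0$, and computing $\langle Y_z,Y_{\bar z}\rangle=\tfrac12(b_1+b_2)^t(\bar b_1+\bar b_2)$ does establish weak conformality of both maps and exactly the stated immersion loci.

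The genuine gap is the step ``at immersion points the mean curvature sphere $V$ coincides with $\mathrm{span}\{e_1,e_2,P_1,P_2\}$, so that $\hat Y\in V$.'' This is not a consequence of the frame structure and the isotropy condition alone, yet everything downstream (the identification $\hat Y=N+\bar\mu Y_z+\mu Y_{\bar z}+\tfrac{|\mu|^2}{2}Y$, hence the meaning of $\theta$ and $\zeta$ and the Riccati/S-Willmore equations) rests on it. Concretely: isotropy forces $b_1=c_1v$, $b_2=c_2v$ for a single null $v\in\C^3$, so the $P$-parts of $Y_z$ and $Y_{\bar z}$ span $\mathrm{span}_{\C}\{v\cdot P,\bar v\cdot P\}$ where $v\cdot P:=\sum_j v_jP_j$; but $V_{\C}$ is spanned by $Y$, $Y_z$, $Y_{\bar z}$ and $Y_{z\bar z}$, and differentiating $Y_z$ shows that $Y_{z\bar z}$ carries, besides a $\hat Y$-component of size $\langle Y_z,Y_{\bar z}\rangle$, an extra $P$-component proportional to $(v_{\bar z}+\bar A_2 v)\cdot P$, which for an arbitrary flat $\alpha$ with $B_1B_1^t=0$ need not lie in $\mathrm{span}_{\C}\{v,\bar v\}$. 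It is precisely the harmonic map equation --- the upper-right block $\partial_{\bar z}B_1+\bar A_1B_1-B_1\bar A_2=0$ --- which, combined with $\mathrm{rank}\,B_1\le 1$, gives $c_i(v_{\bar z}+\bar A_2v)\parallel v$ and hence $\hat Y\in V$ (equivalently, that the frame vector $\psi$ really is the surface normal of $[Y]$). So harmonicity must be injected one step earlier than where you place it; once that is done, your formula $\hat Y_z=\tfrac{\mu}{2}\hat Y+\rho\,(Y_z+\tfrac{\mu}{2}Y)$ immediately reads off $\theta=\zeta=0$, i.e.\ the Riccati and S-Willmore equations, with no further block-by-block expansion needed. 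A small further correction: Theorem~\ref{thm-willmore} states the Willmore condition as $D_{\bar z}D_{\bar z}\kappa+\tfrac{\bar s}{2}\kappa=0$, so it is not the S-Willmore relation itself; you obtain it by differentiating $D_{\bar z}\kappa=-\tfrac{\bar\mu}{2}\kappa$ and substituting the conjugate Riccati equation.
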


Note that $Y$ or $\hat{Y}$ may degenerate to a point, and in this case the dual
($\hat Y$ or $Y$) is M\"obius equivalent to a minimal surface in $\mathbb{R}^3$.

Since $B_1B_1^t=0$ serves as some isotropic condition, we define:
 \begin{definition} \label{isotropicdef}
Let $f:M\rightarrow SO^+(1,4)/(SO^+(1,1)\times SO(3))$ be a non-constant harmonic map. Then $f$ is called an \emph{isotropic} harmonic map if the Maurer-Cartan form of any frame of $f$, with the above notation, satisfies $B_1B_1^t=0$.
\end{definition}
This characterization of Willmore surfaces in terms of isotropic harmonic maps essentially
follows from the work of H\'elein \cite{Helein,Helein2}, although the name ``isotropic"
is not used there.

 \section{Isotropic harmonic maps into $SO^+(1,4)/(SO^+(1,1)\times SO(3))$}
\label{section3}

\subsection{Harmonic maps into a Symmetric space}

Let $N=G/K$ be a symmetric space with involution $\sigma: G\rightarrow G$ such that $G^{\sigma}\supset K\supset(G^{\sigma})_0$. Let $\mathfrak{g}$ and $\mathfrak{k}$ denote the Lie algebras of $G$ and $K$ respectively. The Cartan decomposition shows that
\[
\mathfrak{g}=\mathfrak{k}\oplus\mathfrak{p}, \quad
[\mathfrak{k},\mathfrak{k}]\subset\mathfrak{k}, \quad
[\mathfrak{k},
\mathfrak{p}]\subset\mathfrak{p},
\quad [\mathfrak{p},\mathfrak{p}]\subset\mathfrak{k}.
\]
 Denote $\pi:G\rightarrow G/K$ the projection of $G$ into $G/K$.

Let $f:M\rightarrow G/K$ be a conformal harmonic map from a connected, oriented, closed surface $M$. Let $U\subset M$ be an open connected subset.
Then there exists a frame $F: U\rightarrow G$ such that $f=\pi\circ F$. So we have the Maurer-Cartan form and  Maurer-Cartan equation
\[
F^{-1}\dd F= \alpha, \ \dd \alpha+\frac{1}{2}[\alpha\wedge\alpha]=0.
\]
Decomposing these with respect to $\mathfrak{g}=\mathfrak{k}\oplus\mathfrak{p}$ amounts to:
\[
\alpha=\alpha_0+\alpha_1, \quad  \alpha_0\in \Gamma(\mathfrak{k}\otimes T^*M), \quad
\alpha_1\in \Gamma(\mathfrak{p}\otimes T^*M),
\]
\[
\left\{\begin{array}{ll}
 & \dd \alpha_0+\frac{1}{2}[\alpha_0\wedge\alpha_0]+\frac{1}{2}[\alpha_1\wedge\alpha_1]=0.\\
& \dd \alpha_1+[\alpha_0\wedge\alpha_1]=0.
\end{array}\right.
\]
Decomposing $\alpha_1$ further into the $(1,0)-$part $\alpha_1^\prime$ and the $(0,1)-$part $\alpha_1^{\prime \prime}$, we then set
 \[
\alpha_{\lambda}=\lambda^{-1}\alpha_{1}^\prime+\alpha_0+\lambda\alpha_{1}^{\prime \prime},  \quad \lambda\in \SSS^1.
\]
We have the famous characterization in terms of one-parameter families:
\begin{lemma} $($\cite{DPW}$)$ The map  $f:M\rightarrow G/K$ is harmonic if and only if
\[
 \dd \alpha_{\lambda}+\frac{1}{2}[\alpha_{\lambda}\wedge\alpha_{\lambda}]=0\ \ \hbox{for all}\ \lambda \in \SSS^1.
\]
\end{lemma}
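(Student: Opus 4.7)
The plan is to expand $\dd \alpha_{\lambda} + \tfrac{1}{2}[\alpha_{\lambda}\wedge\alpha_{\lambda}]$ as a Laurent polynomial in $\lambda$ with $\mathfrak{g}$-valued $2$-form coefficients, recognize that the $\lambda^{0}$ coefficient is exactly the $\mathfrak{k}$-part of the Maurer-Cartan equation for $F$ and hence automatic, and then identify the $\lambda^{\pm 1}$ coefficients with the classical complex-analytic formulation of the harmonic map equation (Koszul-Malgrange / Ishihara-Eells).

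First I would substitute $\alpha_{\lambda} = \lambda^{-1}\alpha_{1}^{\prime} + \alpha_{0} + \lambda\alpha_{1}^{\prime\prime}$ and expand, using that $[\alpha_{1}^{\prime}\wedge\alpha_{1}^{\prime}] = 0 = [\alpha_{1}^{\prime\prime}\wedge\alpha_{1}^{\prime\prime}]$ on a Riemann surface (pure $(2,0)$ and $(0,2)$ forms vanish in real dimension two), together with the symmetry $[\omega\wedge\eta] = [\eta\wedge\omega]$ for Lie-algebra-valued $1$-forms. The bilinear expansion then yields
\[
\dd \alpha_{\lambda} + \tfrac{1}{2}[\alpha_{\lambda}\wedge\alpha_{\lambda}] \;=\; \lambda^{-1} E_{-1} + E_{0} + \lambda\, E_{1},
\]
where $E_{-1} = \dd \alpha_{1}^{\prime} + [\alpha_{0}\wedge\alpha_{1}^{\prime}]$, $E_{1} = \dd \alpha_{1}^{\prime\prime} + [\alpha_{0}\wedge\alpha_{1}^{\prime\prime}]$, and $E_{0} = \dd \alpha_{0} + \tfrac{1}{2}[\alpha_{0}\wedge\alpha_{0}] + [\alpha_{1}^{\prime}\wedge\alpha_{1}^{\prime\prime}]$. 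The identity $\tfrac{1}{2}[\alpha_{1}\wedge\alpha_{1}] = [\alpha_{1}^{\prime}\wedge\alpha_{1}^{\prime\prime}]$ (again a consequence of the surface dimension) shows that $E_{0}$ is precisely the $\mathfrak{k}$-component of $\dd\alpha + \tfrac{1}{2}[\alpha\wedge\alpha] = 0$, so $E_{0} \equiv 0$ holds automatically.

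For the forward implication, if $\dd\alpha_{\lambda} + \tfrac{1}{2}[\alpha_{\lambda}\wedge\alpha_{\lambda}] = 0$ for every $\lambda\in\SSS^{1}$, then the Laurent polynomial in $\lambda$ vanishes on the unit circle, and evaluating at three distinct points forces each coefficient $E_{\pm 1}$, $E_{0}$ to vanish separately. Using that $\alpha_{1}^{\prime}$ has type $(1,0)$, the equation $E_{-1} = 0$ reduces on the surface to $\bar\partial \alpha_{1}^{\prime} + [\alpha_{0}^{\prime\prime}\wedge\alpha_{1}^{\prime}] = 0$, which is the standard characterization of harmonicity of a map from a Riemann surface into the symmetric space $G/K$. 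Conversely, given harmonicity one has this equation and its complex conjugate $\partial\alpha_{1}^{\prime\prime} + [\alpha_{0}^{\prime}\wedge\alpha_{1}^{\prime\prime}] = 0$ (i.e.\ $E_{1} = 0$), while $E_{0} = 0$ by the Maurer-Cartan equation for $F$, so the entire Laurent polynomial in $\lambda$ vanishes identically.

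The only subtle bookkeeping point is the bracket identity $\tfrac{1}{2}[\alpha_{1}\wedge\alpha_{1}] = [\alpha_{1}^{\prime}\wedge\alpha_{1}^{\prime\prime}]$, which combines the vanishing of pure $(2,0)$ and $(0,2)$ forms on a surface with the symmetric-space grading $[\mathfrak{p},\mathfrak{p}]\subset\mathfrak{k}$ to place $E_{0}$ in $\mathfrak{k}$; once this is in hand, the matching of $E_{-1} = 0$ with the standard harmonic map equation is routine, and no analytical obstacle arises.
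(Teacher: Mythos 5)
Your proof is correct and is essentially the standard argument: the paper itself gives no proof of this lemma (it is quoted from \cite{DPW}), and your Laurent-coefficient expansion, with $E_0$ absorbed into the $\mathfrak{k}$-part of the Maurer--Cartan equation via $\tfrac{1}{2}[\alpha_1\wedge\alpha_1]=[\alpha_1^{\prime}\wedge\alpha_1^{\prime\prime}]$ and $E_{\mp 1}=0$ identified with the Koszul--Malgrange form of the harmonic map equation and its conjugate, is exactly the proof given in \cite{DPW}. No gaps.
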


\begin{definition}The frame $F(z,\lambda)$,  solving from the equation
\[
\dd F(z,\lambda)= F(z, \lambda) \, \alpha_{\lambda}
\]
with the initial condition $F(0,\lambda)=F(0)$,
is called the {\em extended frame }
 of the harmonic map $f$. Note that it satisfies $F(z,1)=F(z)$.
 \end{definition}

\subsection{The DPW construction of harmonic maps}

\subsubsection{Two decomposition theorems}

We denote by $SO^+(1,n+3)$ the connected component of the identity of the linear isometry group of $\mathbb{R}^{n+4}_1$, with the metric introduced in Section \ref{section2}.
 Then
\[
\mathfrak{s}o(1,n+3)=\mathfrak{g}=\{X\in \mathfrak{g}l(n+4,\mathbb{R})|X^tI_{1,n+3}+I_{1,n+3}X=0\}.
\]
Consider the involution
 \[
\begin{array}{ll}
\sigma:  SO^+(1,n+3)& \rightarrow SO^+(1,n+3)\\
 \ \ \ \ \ \ \ A&\mapsto DAD^{-1},
\end{array}
\quad \quad \hbox{where} \quad
D=\left(
         \begin{array}{ccccc}
             -I_{2} & 0 \\
            0 & I_{n+2} \\
         \end{array}
       \right).
\]
We have $SO^+(1,n+3)^{\sigma}\supset SO^+(1,1)\times SO(n+2)= (SO^+(1,n+3)^{\sigma})_0$. We also have
\[
\mathfrak{g}=
\left\{\left(
                   \begin{array}{cc}
                     A_1 & B_1 \\
                     -B_1^tI_{1,1} & A_2 \\
                   \end{array}
                 \right)
 |A_1^tI_{1,1}+I_{1,1}A_1=0,A_2+A_2^t=0\right\}=\mathfrak{k}\oplus\mathfrak{p},
\]
with
\[
\mathfrak{k}=\left\{\left(
                   \begin{array}{cc}
                     A_1 &0 \\
                     0 & A_2 \\
                   \end{array}
                 \right)
 |A_1^tI_{1,1}+I_{1,1}A_1 =0, A_2+A_2^t=0\right\}, \]\[
\  \mathfrak{p}=\left\{\left(
                   \begin{array}{cc}
                   0 & B_1 \\
                     -B_1^tI_{1,1} & 0 \\
                   \end{array}
                 \right)
\right\}.
\]
Let $G^{\mathbb{C}}=SO^+(1,n+3,\mathbb{C}) := \{X \in SL(n+4,\C) ~|~ X^t I_{1,n+3} X =I_{1,n+3}\}$,
which has Lie algebra $\mathfrak{so}(1,n+3,\mathbb{C})$. Extend $\sigma$ to an inner involution of $SO^+(1,n+3,\mathbb{C})$  with fixed point group $K^{\mathbb{C}}=S(O^+(1,1,\mathbb{C})\times O(n+2,\C))$.

Let $\Lambda G^{\mathbb{C}}_{\sigma}$ denote the group of loops in $G^C =SO^+(1,n+3,\mathbb{C})$ with the twisting by $\sigma$. Let $\Lambda^+G^{\mathbb{C}}_{\sigma}$  denote the subgroup of loops which extend holomorphically to the unit disk $|\lambda|\leq1$. We also
use the subgroup
\[
\Lambda_B^+ G^{\mathbb{C}}_{\sigma}:=\{\gamma\in\Lambda^+G^{\mathbb{C}}_{\sigma}~|~\gamma|_{\lambda=0}\in \mathfrak{B} \}.
\]
Here $\mathfrak{B}\subset K^{\mathbb{C}}$ is defined from the Iwasawa decomposition
\[
K^{\mathbb{C}}=K\cdot\mathfrak{B}.
\]
In this case,
\[
\mathfrak{B}=\left\{\left(
                   \begin{array}{cc}
                     \mathrm{b}_1 & 0 \\
                     0 & \mathrm{b}_2 \\
                   \end{array}
                 \right)\ |\ \mathrm{b}_1=\left(
                        \begin{array}{cc}
                          \cos\theta & i\sin\theta \\
                          i\sin\theta & \cos\theta \\
                        \end{array}
                      \right), \theta\in \frac{ \real }{2\pi{\mathbb  Z}},\ ~\hbox{and } ~\mathrm{b}_2\in \mathfrak{B}_2\right
\}.
\]
Here  $\mathfrak{B}_2$  is the solvable subgroup of $SO(n+2,\mathbb{C}).$
                      For more details, see Lemma 4 of \cite{Helein}. Then we have:
\begin{theorem}\label{thm-iwasawa} Theorem 5 of \cite{Helein}, see also \cite{Xia-Shen},  \cite{DPW}, \cite{PS}, \cite{B-R-S} (Iwasawa decomposition):
The multiplication $\Lambda G_{\sigma}\times \Lambda^+_{B}G^{\mathbb{C}}\rightarrow\Lambda G^{\mathbb{C}}_{\sigma}$ is a real analytic diffeomorphism onto the open dense subset $\Lambda G_{\sigma}\cdot \Lambda^+_{B}G^{\mathbb{C}} \subset\Lambda G^{\mathbb{C}}_{\sigma}$.
\end{theorem}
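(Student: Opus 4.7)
The plan is to deduce the theorem from the untwisted loop-group Iwasawa theorem, combined with the explicit finite-dimensional decomposition $K^{\C} = K \cdot \mathfrak{B}$ displayed in the text, and then transported to the $\sigma$-fixed subgroups. First I would verify the finite-dimensional identity $K^{\C} = K \cdot \mathfrak{B}$ with the specific $\mathfrak{B}$ given: the $\mathrm{b}_1$-block realises the polar decomposition $SO^+(1,1,\C) \cong \C^{\ast} = \real^{+} \cdot S^{1}$ (matching $SO^+(1,1) \cong \real^{+}$ against the compact circle parametrised by $\theta$), while the classical Iwasawa decomposition $SO(n+2,\C) = SO(n+2) \cdot \mathfrak{B}_2$ handles the second block; uniqueness of the factorisation comes from $K \cap \mathfrak{B} = \{e\}$. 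Expanding Fourier-coefficient-by-Fourier-coefficient then yields the loop-algebra splitting $\Lambda \mathfrak{g}^{\C}_\sigma = \Lambda \mathfrak{g}_\sigma \oplus \mathrm{Lie}(\Lambda^+_B G^{\C}_\sigma)$: negative modes sit in $\Lambda \mathfrak{g}_\sigma$, positive modes in $\mathrm{Lie}(\Lambda^+_B G^{\C}_\sigma)$, and the zero mode splits via the finite-dimensional Iwasawa.

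Second, I would invoke the untwisted loop-group Iwasawa theorem, available in \cite{PS}, \cite{DPW}, \cite{B-R-S}, and explicitly stated for a related non-compact setting in \cite[Lemma~4]{Helein}: multiplication $\Lambda G \times \Lambda^+_B G^{\C} \to \Lambda G^{\C}$ is a real analytic diffeomorphism onto an open dense subset. Openness of the image follows from the infinitesimal splitting above together with the implicit function theorem applied to the multiplication map, whose differential at a point $(g,b)$ reduces by left and right translation to the vector-space splitting at the identity. Injectivity reduces to $\Lambda G \cap \Lambda^+_B G^{\C} = \{e\}$: such a loop extends by Schwarz reflection across $|\lambda|=1$ to a holomorphic map $\mathbb{P}^1 \to G^{\C}$, is therefore constant, and its constant value lies in $G \cap \mathfrak{B} = K \cap \mathfrak{B} = \{e\}$. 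Density is a standard Birkhoff-type approximation argument.

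Third, I would add the $\sigma$-twisting. The involution $\sigma$ preserves each of $G$, $K$, and $\mathfrak{B}$, and so its loop extension $\hat{\sigma}(\gamma)(\lambda) := \sigma(\gamma(-\lambda))$ preserves both factors of the untwisted decomposition. By the uniqueness established in step two, any $\gamma \in \Lambda G^{\C}_\sigma$ lying in the untwisted big cell and factored as $\gamma = f \cdot b$ must satisfy $\hat{\sigma}(f) = f$ and $\hat{\sigma}(b) = b$, so $f \in \Lambda G_\sigma$ and $b \in \Lambda^+_B G^{\C}_\sigma$; the converse inclusion is immediate. Restricting the untwisted diffeomorphism to the $\sigma$-fixed subgroups on both sides, and using that the Lie-algebra splitting respects $\hat{\sigma}$ (so the differential remains an isomorphism), gives the claimed real analytic diffeomorphism onto the open dense cell $\Lambda G_\sigma \cdot \Lambda^+_B G^{\C}_\sigma \subset \Lambda G^{\C}_\sigma$.

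The main obstacle is the non-compactness of $G = SO^+(1,n+3)$: this is why the decomposition holds only on a big cell rather than globally, and it is also what forces the non-standard definition of $\mathfrak{B}$, with its compact $S^{1}$-factor in place of the usual nilpotent-times-split-torus recipe. Pinning down $\mathfrak{B}$ so that $K^{\C} = K \cdot \mathfrak{B}$ holds globally for this specific $K$ is the technical heart of the matter; once it is in place, the passage to loops and then to the $\sigma$-fixed subgroups is essentially formal.
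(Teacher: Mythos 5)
The paper does not actually prove this statement: it is imported wholesale as Theorem 5 of \cite{Helein}, with \cite{Xia-Shen}, \cite{DPW}, \cite{PS} and \cite{B-R-S} as further references, so there is no in-paper argument to compare yours against. Judged on its own terms, your outline follows the standard route taken in those sources: the finite-dimensional decomposition $K^{\C}=K\cdot\mathfrak{B}$ (with the polar decomposition of $SO^+(1,1,\C)\cong\C^{*}$ against the circle factor of $\mathfrak{B}$), the Lie-algebra splitting plus the implicit function theorem for openness and local diffeomorphism, triviality of $\Lambda G\cap\Lambda^+_{B}G^{\C}$ via Schwarz reflection for injectivity, and restriction to the $\hat\sigma$-fixed subgroups for the twisting. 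All of those steps are sound in outline.

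The genuine gap is density. For a non-compact real form such as $G=SO^+(1,n+3)$ the multiplication map is not surjective --- that is the whole reason the theorem is stated for a big cell --- and the assertion that the big cell is \emph{dense} is precisely the non-trivial content of H\'elein's Theorem 5 and of the cell analysis in \cite{B-R-S}; it is not ``a standard Birkhoff-type approximation argument'', and \cite{PS} (compact groups only) does not supply it. Moreover, even granting density of the untwisted big cell in $\Lambda G^{\C}$, your third step only restricts the diffeomorphism to the $\hat\sigma$-fixed points: density of $\Lambda G_{\sigma}\cdot\Lambda^+_{B}G^{\C}_{\sigma}$ inside $\Lambda G^{\C}_{\sigma}$ does not follow formally from density of the ambient cell, and needs its own argument (for instance, an equivariant version of the Birkhoff/Bruhat cell decomposition showing that the complement meets the twisted subgroup in positive codimension). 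You should either carry out that analysis or be explicit that both density claims are being quoted from \cite{Helein} and \cite{B-R-S} rather than derived.
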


Let $\Lambda^-_*G^{\mathbb{C}}_{\sigma}$ denote the loops that extend holomorphically into $\infty$ and take values $I$ at infinity.
\begin{theorem} \label{thm-birkhoff}Theorem 7 of \cite{Helein}, see also \cite{Xia-Shen}, \cite{DPW}, \cite{PS}, \cite{B-R-S}  (Birkhoff decomposition):
The multiplication $\Lambda^-_* G^{\mathbb{C}}_{\sigma}\times \Lambda^+G^{\mathbb{C}}\rightarrow\Lambda G^{\mathbb{C}}_{\sigma}$ is a real analytic diffeomorphism onto the open subset $ \Lambda^-_* G^{\mathbb{C}}_{\sigma}\cdot \Lambda^+G^{\mathbb{C}}$ (the big cell) of $\Lambda G^{\mathbb{C}}_{\sigma}$.

\end{theorem}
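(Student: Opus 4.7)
The plan is to reduce this to the classical Birkhoff factorization for the untwisted loop group $\Lambda G^{\mathbb{C}}$ of Pressley--Segal, and then verify that the decomposition is compatible with the twisting $\sigma$.

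\textbf{Step 1: Uniqueness.}  Suppose $\gamma \in \Lambda G^{\mathbb{C}}_\sigma$ admits two factorizations $\gamma = \gamma_-\gamma_+ = \tilde\gamma_- \tilde\gamma_+$ with $\gamma_-, \tilde\gamma_- \in \Lambda^-_* G^{\mathbb{C}}_\sigma$ and $\gamma_+, \tilde\gamma_+ \in \Lambda^+ G^{\mathbb{C}}_\sigma$.  Then $\tilde\gamma_-^{-1} \gamma_- = \tilde\gamma_+ \gamma_+^{-1}$; the left-hand side extends holomorphically to a neighborhood of $\lambda = \infty$ (with value $I$ at $\infty$), while the right-hand side extends holomorphically to the closed disk $|\lambda|\le 1$.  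By Liouville's theorem on $\mathbb{C}P^1$, both sides are constant, and the normalization at $\infty$ forces this constant to be $I$.  Hence $\gamma_- = \tilde\gamma_-$ and $\gamma_+ = \tilde\gamma_+$.

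\textbf{Step 2: Existence on the big cell in the untwisted case.}  Forgetting the $\sigma$-twisting, the classical theorem (see, e.g., \cite{PS}, also the proofs in \cite{DPW}, \cite{B-R-S}) asserts that there is a well-defined open subset $\mathcal{U}\subset \Lambda G^{\mathbb{C}}$ on which factorization as $\gamma = \gamma_-\gamma_+$ exists and is unique, and that the multiplication map $\Lambda^-_* G^{\mathbb{C}} \times \Lambda^+ G^{\mathbb{C}} \to \mathcal{U}$ is a biholomorphism.  The key analytic input is the solution of a matrix Riemann--Hilbert problem on $S^1$ via Cauchy-type integrals; openness of $\mathcal U$ follows because the associated Fredholm operator has nowhere-vanishing index on the component of the identity loop.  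Real analyticity of the factorization map, and of its inverse, follows from the implicit/inverse function theorem in the Banach (or Fréchet) category on the appropriate loop spaces.

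\textbf{Step 3: Compatibility with the twisting.}  Let $\gamma \in \Lambda G^{\mathbb{C}}_\sigma$ lie in the big cell $\mathcal U$, so $\gamma = \gamma_- \gamma_+$ by Step 2.  Apply the involution: on one hand $\sigma(\gamma(\lambda)) = \gamma(-\lambda) = \gamma_-(-\lambda)\gamma_+(-\lambda)$, where the factors on the right have the correct analyticity properties at $\infty$ and $0$ respectively, with $\gamma_-(-\lambda)|_{\lambda=\infty} = I$.  On the other hand $\sigma(\gamma) = \sigma(\gamma_-)\sigma(\gamma_+)$, where $\lambda \mapsto \sigma(\gamma_-(\lambda))$ is holomorphic near $\lambda = \infty$ with value $\sigma(I) = I$, and $\lambda \mapsto \sigma(\gamma_+(\lambda))$ is holomorphic on $|\lambda| \le 1$.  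Both are Birkhoff factorizations of $\sigma(\gamma)$, so by Step 1 they coincide:
\[
\sigma(\gamma_\pm(\lambda)) = \gamma_\pm(-\lambda),
\]
which is precisely the $\sigma$-twisting condition.  Hence $\gamma_\pm \in \Lambda^{\pm}_{(*)} G^{\mathbb{C}}_\sigma$.  Conversely, the product of a $\sigma$-twisted pair is manifestly $\sigma$-twisted, so the multiplication restricts to a real analytic diffeomorphism from $\Lambda^-_* G^{\mathbb{C}}_\sigma \times \Lambda^+ G^{\mathbb{C}}_\sigma$ onto $\mathcal U \cap \Lambda G^{\mathbb{C}}_\sigma$.

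\textbf{Main obstacle.}  The substantive analytical content is in Step 2, which is the classical Birkhoff factorization; the paper simply appeals to \cite{PS, DPW, B-R-S}.  The only genuinely new point to check is the $\sigma$-equivariance in Step 3, and this is a short uniqueness argument.  So the proof reduces to carefully stating the classical result and observing, via the uniqueness principle, that the twisting is preserved by factorization.
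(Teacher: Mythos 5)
This theorem is not proved in the paper at all --- it is imported verbatim from H\'elein (Theorem 7) and the Pressley--Segal/DPW literature --- and your argument (classical untwisted Birkhoff factorization as the analytic core, plus the uniqueness trick to show the two factors inherit the $\sigma$-twisting) is exactly the standard proof given in those references, so your proposal is correct and takes the same route. The only loose ends are cosmetic: in the Liouville step the factors must extend to the full exterior and interior closed disks (not merely to neighborhoods of $\infty$ and $0$) so that the two sides glue to a holomorphic map on all of $\mathbb{C}P^{1}$, and openness of the big cell follows from invertibility of the associated Toeplitz/Fredholm operator being an open condition, rather than from a ``nowhere-vanishing index'' (the relevant index is zero on the identity component).
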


\subsubsection{The DPW construction and Wu's formula}

Here we recall the DPW construction for harmonic maps. Let $\mathbb{D}\subset\mathbb{C}$ be a disk or $\mathbb{C}$ itself, with complex coordinate $z$.
\begin{theorem} \label{thm-DPW} \cite{DPW}
\begin{enumerate}
\item Let $f:\mathbb{D}\rightarrow G/K$ be a harmonic map with an extended frame $F(z,\bar{z},\lambda)\in \Lambda G_{\sigma}$ and $F(0,0,\lambda)=I$. Then there exists a Birkhoff decomposition
\[
F_-(z,\lambda)=F(z,\bar{z},\lambda)F_+(z,\bar{z},\lambda),~ \hbox{ with }~ F_+\in\Lambda^+G^{\mathbb{C}}_{\sigma},
\]
such that $F_-(z,\lambda):\mathbb{D} \rightarrow\Lambda^-_*G^{\mathbb{C}}_{\sigma}$ is meromorphic. Moreover, the Maurer-Cartan form of $F_-$ is of the form
\[
\eta=F_-^{-1}\dd F_-=\lambda^{-1}\eta_{-1}(z)\dd z,
\]
with $\eta_{-1}$ independent of $\lambda$. The $1$-form $\eta$ is called the normalized potential of $f$.
\item Let $\eta$ be a $\lambda^{-1}\cdot\mathfrak{p}-$valued meromorphic 1-form on $\mathbb{D}$. Let $F_-(z,\lambda)$ be a solution to $F_-^{-1}\dd F_-=\eta$, $F_-(0,\lambda)=I$. Then on an open subset $\mathbb{D}_{\mathfrak{I}}$ of $\mathbb{D}$ one has
\[
F_-(0,\lambda)=\tilde{F}(z,\bar{z},\lambda)\cdot \tilde{F}^+(z,\bar{z},\lambda),\ \hbox{ with }\ \tilde{F}\in\Lambda G_{\sigma},\ \tilde{F}\in\Lambda ^+_{B} G^{\mathbb{C}}_{\sigma}.
\]
This way, one obtains an extended frame $\tilde{F}(z,\bar{z},\lambda)$ of some harmonic map from  $\mathbb{D}_{\mathfrak{I}}$  to $G/K$ with $\tilde{F}(0,\lambda)=I$. Moreover, all harmonic maps can be obtained in this way, since these two procedures are inverse to each other if the normalization at some based point is used.
\end{enumerate}
\end{theorem}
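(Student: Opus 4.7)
The plan is to follow the standard arguments of \cite{DPW} adapted to our twisted loop group setting, with Theorems \ref{thm-iwasawa} and \ref{thm-birkhoff} as the main input.

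For part (i), I would apply the Birkhoff decomposition pointwise to the extended frame. Since $F(0,0,\lambda) = I$ lies in the big cell, the same holds on a neighborhood of the origin, and in fact on $\mathbb{D}$ away from a discrete locus where the decomposition fails. This yields $F = F_-\cdot F_+^{-1}$ with $F_-\in\Lambda^-_*G^{\mathbb{C}}_{\sigma}$ and $F_+\in\Lambda^+G^{\mathbb{C}}_{\sigma}$, that is $F_- = F\cdot F_+$ as claimed; since Birkhoff is real analytic on the big cell and at worst polar across its boundary, $F_-$ is meromorphic. To identify the normalized potential, I would set $V := F_+$ and compute directly
\[
F_-^{-1}\dd F_- = V^{-1}\alpha_{\lambda} V + V^{-1}\dd V.
\]
The $\dd\bar z$ part of the right-hand side, coming from $\alpha_0''+\lambda\alpha_1''$ together with $V^{-1}\bar\partial V$, has only non-negative powers of $\lambda$ because $V\in\Lambda^+G^{\mathbb{C}}_\sigma$, whereas the left-hand side lies in the Lie algebra of $\Lambda^-_*G^{\mathbb{C}}_\sigma$ and so has strictly negative powers only. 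Uniqueness of the $\Lambda^-_*\cdot\Lambda^+$ factorization forces the $\dd\bar z$ part to vanish, hence $\bar\partial F_- = 0$. The identical argument on the $\dd z$ part kills every Fourier mode except $\lambda^{-1}$, leaving $\eta_{-1}(z) = V_0^{-1}\alpha_1'(z)V_0\in\mathfrak{p}^{\mathbb{C}}$ with $V_0:=V|_{\lambda=0}$.

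For part (ii), I would reverse the construction. The ODE $\dd F_- = F_-\eta$ with $F_-(0,\lambda)=I$ has a unique meromorphic solution in $\Lambda^-_*G^{\mathbb{C}}_{\sigma}$ because $\eta$ is $\lambda^{-1}\mathfrak{p}$-valued. On the open set $\mathbb{D}_{\mathfrak{I}}$ where $F_-$ lies in the Iwasawa big cell, Theorem \ref{thm-iwasawa} produces a real analytic splitting $F_- = \tilde F\cdot\tilde F^+$ with $\tilde F\in\Lambda G_{\sigma}$ and $\tilde F^+\in\Lambda_B^+G^{\mathbb{C}}_\sigma$. Writing $W := \tilde F^+$, the analogous identity reads
\[
\tilde F^{-1}\dd \tilde F = W\eta W^{-1} - \dd W\cdot W^{-1}.
\]
The first term contributes $\lambda^{-1}W_0\eta_{-1}W_0^{-1}\dd z$ plus non-negative powers of $\lambda$ in $\dd z$, and the second has only non-negative powers in both $\dd z$ and $\dd\bar z$. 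The reality condition defining $\Lambda G_\sigma$ inside $\Lambda G^{\mathbb{C}}_\sigma$ interchanges $\dd z$ and $\dd\bar z$ under $\lambda\leftrightarrow 1/\bar\lambda$; combined with the $\sigma$-twisting this forces all Fourier modes outside $\{\lambda^{-1},\lambda^0,\lambda\}$ to vanish and recasts the result in the canonical form $\lambda^{-1}\tilde\alpha_1'\dd z + \tilde\alpha_0 + \lambda\tilde\alpha_1''\dd\bar z$. The Maurer-Cartan equation for $\tilde F$ then expresses flatness of this family for every $\lambda\in\SSS^1$, which by the preceding lemma is harmonicity of $\pi\circ\tilde F|_{\lambda=1}$. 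Uniqueness in the Birkhoff step makes the two procedures inverse under the normalization $F_-(0,\lambda) = I = \tilde F(0,\lambda)$.

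The main obstacle is coordinating the two exceptional loci: the polar set where Birkhoff factorization fails in (i), and the complement of the Iwasawa big cell that restricts $\tilde F$ to $\mathbb{D}_{\mathfrak{I}}$ in (ii). One must also verify at every bookkeeping step that the $\sigma$-twisting and the reality condition are preserved simultaneously, which requires care when conjugating $V^{-1}\alpha_\lambda V$ and $W\eta W^{-1}$, and when differentiating the Iwasawa factor $W$.
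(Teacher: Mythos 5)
The paper does not prove this theorem: it is quoted verbatim from Dorfmeister--Pedit--Wu \cite{DPW}, so there is no internal proof to compare against. Your reconstruction follows the standard DPW argument in its standard form -- pointwise Birkhoff splitting plus the Fourier-mode comparison of $V^{-1}\alpha_\lambda V + V^{-1}\dd V$ against the Lie algebra of $\Lambda^-_*G^{\mathbb C}_\sigma$ for part (i), and pointwise Iwasawa splitting plus the reality/twisting constraint on $W\eta W^{-1}-\dd W\cdot W^{-1}$ for part (ii) -- and the computations you indicate are the right ones.

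Two points deserve more than the assertion you give them. First, the meromorphy of $F_-$ on all of $\mathbb D$ (not merely its holomorphy on the open set where $F(z,\cdot)$ lies in the Birkhoff big cell, which is what your mode-matching argument actually delivers via $\bar\partial F_-=0$) is the genuinely nontrivial analytic content of part (i); in \cite{DPW} it rests on the Grassmannian model, where the complement of the big cell is cut out by the vanishing of a holomorphic section of the determinant line bundle, so that the entries of $F_-$ are locally ratios of holomorphic functions. Saying Birkhoff is ``at worst polar across the boundary of the big cell'' is naming the conclusion, not proving it. Second, the step where you say ``uniqueness of the $\Lambda^-_*\cdot\Lambda^+$ factorization forces the $\dd\bar z$ part to vanish'' is more cleanly justified by the direct sum decomposition $\Lambda\mathfrak g^{\mathbb C}_\sigma=\Lambda^-_*\mathfrak g^{\mathbb C}_\sigma\oplus\Lambda^+\mathfrak g^{\mathbb C}_\sigma$ at the Lie algebra level: the left-hand side has only strictly negative modes while the $\dd\bar z$ contribution has only nonnegative ones, so it is zero. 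Neither point changes the architecture of your argument, which is the same as the cited source's.
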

The normalized potential can be determined in the following way. Let  $f$ and $F$ be as above. Let $\alpha_{\lambda}=F^{-1}\dd F$. Let $\delta_1$ and $\delta_0$ denote the sum of the holomorphic terms of $z$ around $z=0$ in the Taylor expansion of $\alpha_1^\prime (\frac{\partial}{\partial z})$ and  $\alpha_0^\prime (\frac{\partial}{\partial z})$.
 \begin{theorem} \label{thm-wu} \cite{Wu} (Wu's formula) We retain the  notations of Theorem \ref{thm-DPW}. The the normalized potential of $f$ with respect to the base point $0$ is given by
 \[
 \eta=\lambda^{-1}\Delta_0 \delta_1\Delta_0 ^{-1} \dd z,
\]
where $\Delta_0 :\mathbb{D}\rightarrow G^{\mathbb{C}}$ is the solution to $\Delta_0 ^{-1}\dd \Delta_0 =\delta_0 \dd z$, $\Delta_0(0)=I$.
\end{theorem}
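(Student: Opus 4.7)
The plan is to exploit uniqueness of the Birkhoff decomposition and reduce to an ODE along the slice $\bar z=0$. Since the factor $F_-$ from Theorem \ref{thm-DPW} depends only on $z$, its Maurer-Cartan form is completely determined by the restriction of $F$ to any slice $\bar z=\text{const}$, and $\bar z=0$ is the one on which the definitions of $\delta_0$ and $\delta_1$ act most transparently.

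The key observation is that, by the very definition of $\delta_0$ and $\delta_1$ as the holomorphic-in-$z$ Taylor parts of $\alpha_0'(\partial/\partial z)$ and $\alpha_1'(\partial/\partial z)$, one has
\[
(F\lvert_{\bar z=0})^{-1}\,\partial_z(F\lvert_{\bar z=0}) = \lambda^{-1}\delta_1(z)+\delta_0(z),
\]
so $F\lvert_{\bar z=0}$ is the unique holomorphic solution of the linear ODE $\partial_z G = G\cdot(\lambda^{-1}\delta_1+\delta_0)$ with $G(0,\lambda)=I$.

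The idea is then to exhibit an explicit Birkhoff decomposition of $F\lvert_{\bar z=0}$ by building candidate $\Lambda^-_*$ and $\Lambda^+$ factors directly. Let $\Delta_0(z)\in K^{\mathbb C}$ solve $\Delta_0^{-1}\,d\Delta_0=\delta_0\,dz$ with $\Delta_0(0)=I$; since $\sigma$ acts trivially on $K^{\mathbb C}$, it embeds in $\Lambda^+G^{\mathbb C}_\sigma$ as a constant loop. Let $\tilde F_-(z,\lambda)$ solve
\[
\tilde F_-^{-1}\,d\tilde F_- = \lambda^{-1}\,\Delta_0\,\delta_1\,\Delta_0^{-1}\,dz,\qquad \tilde F_-(0,\lambda)=I.
\]
A one-line computation, using only the two defining ODEs and $\Delta_0^{-1}\Delta_{0,z}=\delta_0$, shows that the product $\tilde F_-\Delta_0$ satisfies the same ODE as $F\lvert_{\bar z=0}$ with the same initial condition; hence $\tilde F_-\Delta_0 = F\lvert_{\bar z=0}$. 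Granted $\tilde F_-\in\Lambda^-_*G^{\mathbb C}_\sigma$, this is a valid Birkhoff decomposition, and uniqueness (Theorem \ref{thm-birkhoff}) identifies $\tilde F_-$ with the DPW factor $F_-$. Taking the Maurer-Cartan form of $\tilde F_-$ is then exactly Wu's formula.

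The only technical step requiring more than a line of computation is the verification that $\tilde F_-$ genuinely lies in $\Lambda^-_*G^{\mathbb C}_\sigma$, i.e.\ that its formal power-series solution $I+\sum_{k\ge 1}\lambda^{-k}f_k(z)$ actually converges for $|\lambda|$ large. This is a standard Picard-iteration estimate: the $f_k$ are iterated integrals of the holomorphic $1$-form $\Delta_0\delta_1\Delta_0^{-1}\,dz$, and a Cauchy-type majorant on a compact subdisk in $z$ yields uniform convergence on $\{|\lambda|\ge R\}$ for some $R$. The $\sigma$-twisting of $\tilde F_-$ is automatic since the driving form takes values in $\lambda^{-1}\mathfrak{p}^{\mathbb C}$. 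If one is willing to work in the formal loop group, even this step can be bypassed, since the normalized potential is determined by the Taylor coefficients of $F$ at $0$ irrespective of convergence.
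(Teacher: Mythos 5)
Your argument is correct and is essentially the standard proof of Wu's formula: the paper itself gives no proof, citing \cite{Wu}, and your route --- complexify, restrict the frame to the slice $\bar z=0$ where its Maurer--Cartan form becomes $\lambda^{-1}\delta_1+\delta_0$, exhibit the explicit factorization $F\lvert_{\bar z=0}=\tilde F_-\Delta_0$ with $\Delta_0\in K^{\mathbb C}\subset\Lambda^+G^{\mathbb C}_\sigma$, and invoke uniqueness in Theorem \ref{thm-birkhoff} --- is precisely the argument of the cited reference. The only point you pass over quickly is why the Birkhoff minus-factor of $F\lvert_{\bar z=0}$ agrees with the DPW factor $F_-$ (one needs that the holomorphic extension of $F_+$ in $\bar z$ stays in $\Lambda^+G^{\mathbb C}_\sigma$ at $\bar z=0$, which follows from the identity theorem applied to its negative Fourier coefficients), but this is a minor gap in an otherwise complete and correct proof.
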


For many applications, normalized potentials are too specific.
Another type of \emph{holomorphic} potential was also
introduced in \cite{DPW}:
\begin{theorem} \label{thm-DPW2} \cite{DPW}
  We retain the notations of $f$ and $F(z,\bar{z},\lambda)$ in Theorem \ref{thm-DPW}. Then there exists some $V+:\mathbb{D}\rightarrow \Lambda^+G^{\mathbb{C}}_{\sigma}$ such that
	\[
	C(z,\lambda)=F(z,\bar{z},\lambda)V_+(z,\bar{z},\lambda)
	\]
is holomorphic in $z$ and in $\lambda\in \mathbb{C}^*$. Moreover, the Maurer-Cartan form $\Xi=C^{-1} \dd C$ is a holomorphic $1-$form on $
\mathbb{D}$ with $\lambda\eta$ holomorphic in $\lambda$ for all $\lambda\in\mathbb{C}$. The $1$-form $\Xi$ is called a holomorphic potential of $f$.

Conversely, let $\Xi$ be a $\Lambda\mathfrak{g}^{\mathbb{C}}_{\sigma}-$valued holomorphic 1-form on $\mathbb{D}$ such that $\lambda\Xi$ is holomorphic in $\lambda$ for all $\lambda\in \mathbb{C}$. Let $C$ be a solution to $C^{-1} \dd C=\Xi$, $C(0,\lambda)=I$. Then on an open subset $\mathbb{D}_{\mathfrak{I}}$ of $\mathbb{D}$, one obtains
\[
C(z,\lambda)=\hat{F}(z,\bar{z},\lambda)\cdot \hat{V}_+(z,\bar{z},\lambda),~ \hbox{ with }~ \tilde{F}\in\Lambda G_{\sigma},\ \hat{V}_+  \in\Lambda ^+_{B} G^{\mathbb{C}}_{\sigma}.
\]
Hence, one obtains an extended frame $\hat{F}(z,\bar{z},\lambda)$ of some harmonic map from  $\mathbb{D}_{\mathfrak{I}}$  to $G/K$ with $\hat{F}(0,\lambda)=I$. Moreover, all harmonic maps can be obtained in this way.
\end{theorem}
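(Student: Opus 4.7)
The statement has a direct and a converse part, and I would organize the proof around those two directions. \textbf{Forward direction:} Start from the extended frame $F$ with $F^{-1}dF = \alpha_\lambda = \lambda^{-1}\alpha_1' + \alpha_0 + \lambda\alpha_1''$. The idea is to find a loop $V_+ \in \Lambda^+ G^\mathbb{C}_\sigma$, depending smoothly on $z$, that kills the $d\bar z$-part of $C^{-1}dC$, where $C := FV_+$. This amounts to solving
\[
\bar\partial V_+ = -(\alpha_0'' + \lambda\alpha_1'')\,V_+\,d\bar z,\qquad V_+(0,\lambda) = I,
\]
within $\Lambda^+ G^\mathbb{C}_\sigma$. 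The key point is that the $\sigma$-twisting puts $\alpha_0''$ in the $\lambda^0$-slot and $\alpha_1''$ in the $\lambda^1$-slot, so the right-hand side is $\Lambda^+\mathfrak{g}^\mathbb{C}_\sigma$-valued; standard $\bar\partial$-existence on a disk (as in \cite{DPW}) then yields a solution valued in $\Lambda^+ G^\mathbb{C}_\sigma$. Once $V_+$ is in hand, $C=FV_+$ is manifestly holomorphic in $z$ and in $\lambda \in \mathbb{C}^*$, and $\Xi = C^{-1}dC = V_+^{-1}\alpha_\lambda' V_+\,dz + V_+^{-1}\partial V_+$ has only $\lambda$-powers $\geq -1$, so $\lambda\Xi$ extends holomorphically across $\lambda=0$.

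\textbf{Converse direction:} Given $\Xi$, solve $dC = C\Xi$ with $C(0,\lambda)=I$. Holomorphicity of $\Xi$ in $z$ yields holomorphicity of $C$ in $z$, and $\lambda\Xi$ holomorphic in $\lambda\in\mathbb{C}$ yields $C$ holomorphic in $\lambda\in\mathbb{C}^*$. On a neighborhood $\mathbb{D}_{\mathfrak{I}}$ of $0$ where $C$ takes values in the Iwasawa big cell, Theorem \ref{thm-iwasawa} produces the factorization $C = \hat F \cdot \hat V_+$ with $\hat F\in\Lambda G_\sigma$ and $\hat V_+ \in \Lambda^+_B G^\mathbb{C}_\sigma$. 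Differentiating $\hat F = C\hat V_+^{-1}$ gives
\[
\hat F^{-1}d\hat F \;=\; \hat V_+\,\Xi\,\hat V_+^{-1} \;-\; d\hat V_+\cdot \hat V_+^{-1},
\]
whose Laurent series in $\lambda$ has only powers $\geq -1$. Since $\hat F$ is $\Lambda G_\sigma$-valued, the reality condition on $\Lambda\mathfrak{g}_\sigma$ (which identifies the coefficient of $\lambda^n$ with the conjugate of that of $\lambda^{-n}$) then forces all powers $|n|\geq 2$ to vanish. Thus $\hat F^{-1}d\hat F = \lambda^{-1}\hat\alpha_1' + \hat\alpha_0 + \lambda\hat\alpha_1''$, and the one-parameter family characterization (Lemma 3.1 above) shows that $\hat f = \pi\circ\hat F$ is harmonic with extended frame $\hat F$. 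The ``moreover'' clause is immediate: any harmonic map produces some such $\Xi$ by the forward direction.

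\textbf{Main obstacle.} The crux is the existence of $V_+$ in the forward direction, which requires solving a $\bar\partial$-equation valued in an infinite-dimensional subgroup of a loop group. Rather than invoking abstract loop-group $\bar\partial$-theory, the cleanest route is to piggyback on the normalized potential of Theorem \ref{thm-DPW}: take the Birkhoff decomposition $F = F_- F_+$ with $F_-$ meromorphic in $z$, then absorb the poles of $F_-$ by multiplying on the right by a $z$-holomorphic factor in $\Lambda^+ G^\mathbb{C}_\sigma$ (e.g.\ a polynomial-in-$z$ scalar matrix correction), producing a holomorphic $C$ whose Maurer-Cartan form is the sought-after $\Xi$. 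This sidesteps the general $\bar\partial$-existence issue and makes the construction fully explicit, while remaining compatible with the Iwasawa step used in the converse.
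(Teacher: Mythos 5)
First, a point of reference: the paper does not prove this theorem at all — it is quoted from \cite{DPW} as background, so there is no in-paper argument to compare against. Your two-part outline is the standard DPW argument, and your converse direction is correct as written: Iwasawa-factorize $C=\hat F\cdot\hat V_+$ over the preimage of the big cell (which contains a neighbourhood of $0$ since $C(0,\lambda)=I$), compute $\hat F^{-1}\dd \hat F=\hat V_+\,\Xi\,\hat V_+^{-1}-\dd \hat V_+\cdot\hat V_+^{-1}$, note that only powers $\lambda^{n}$ with $n\geq -1$ occur and that the $\lambda^{-1}$-coefficient is a $(1,0)$-form (the term $\dd\hat V_+\cdot\hat V_+^{-1}$ contributes nothing in degree $-1$), and let the reality condition on $\Lambda\mathfrak{g}_{\sigma}$ kill all powers with $|n|\geq 2$; the flatness-for-all-$\lambda$ lemma then gives harmonicity, and uniqueness of the normalized Iwasawa splitting gives $\hat F(0,\lambda)=I$.

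The genuine gap is in your ``main obstacle'' paragraph, i.e.\ precisely at the crux you identify. The proposed shortcut — take $F=F_-F_+$ and absorb the poles of the meromorphic frame $F_-$ by right-multiplying by ``a polynomial-in-$z$ scalar matrix correction'' in $\Lambda^+G^{\mathbb{C}}_{\sigma}$ — does not work. A scalar matrix $p(z)I$ lies in $SO^+(1,n+3,\mathbb{C})$ only for $p=\pm 1$, and even in a group where scalar gauges are available, right-multiplication by $W_+$ changes the Maurer--Cartan form to $W_+^{-1}\eta W_+ + W_+^{-1}\dd W_+$, which for central $W_+$ only shifts the diagonal and cannot cancel the matrix-valued principal parts of $F_-$ at its poles. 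More fundamentally, the poles of $F_-$ sit exactly where $F$ leaves the Birkhoff big cell, and their principal parts involve arbitrarily negative powers of $\lambda$ with singular $z$-coefficients; producing a $z$-holomorphic $W_+:\mathbb{D}\to\Lambda^+G^{\mathbb{C}}_{\sigma}$ with $F_-W_+$ holomorphic is equivalent to producing the holomorphic frame $C$ itself (the only natural candidate is $W_+=F_+V_+$), so the ``sidestep'' is circular. The $\bar\partial$-argument you state first — solve $\bar\partial V_+=-(\alpha_0''+\lambda\alpha_1'')V_+$ with $V_+(0,\lambda)=I$, using that the right-hand side is $\Lambda^+\mathfrak{g}^{\mathbb{C}}_{\sigma}$-valued and that such $\bar\partial$-problems with values in a (Banach) Lie group are solvable over a disk — is not an optional abstraction: it is the proof, and it is how \cite{DPW} establishes existence of holomorphic potentials. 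Promote it from ``mentioned'' to ``executed'' and drop the pole-absorption route; with that change the forward direction is also complete (the computation $\Xi=V_+^{-1}(\lambda^{-1}\alpha_1'+\alpha_0')V_+ + V_+^{-1}\partial V_+$, showing $\lambda\Xi$ extends holomorphically to $\lambda=0$, is correct).
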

Note that there exist many different holomorphic potentials for a harmonic map.

\subsection{Potentials of isotropic harmonic maps}
  Let  $\mathbb{D}$ denote the unit disk of $\mathbb{C}$ or $\mathbb{C}$ itself.
  \begin{theorem}\label{thm-np of HA} \cite{Helein}, \cite{Helein2}
\begin{enumerate}
\item  Let $f: \mathbb{D}\rightarrow SO^+(1,4)/(SO^+(1,1)\times SO(3))$
 be an isotropic harmonic map with complex coordinate $z$. Then its normalized potential satisfies
 \[
       \eta=\lambda^{-1}\left(
                   \begin{array}{cc}
                    0 & \hat{B}_1 \\
                     -\hat{B}_1^tI_{1,1} & 0\\
                   \end{array}
                 \right)\dd z, \ \hbox{ with }\ \hat{B}_1\hat{B}_1^t=0.
\]
Conversely, let $f$ be the harmonic map derived from a normalized potential $\eta$ satisfying the above condition. Then $f=Y\wedge\hat{Y}$ is an isotropic harmonic map associated with the dual Willmore surfaces $Y$ and $\hat{Y}$.
\item
 Let $f: \mathbb{D}\rightarrow SO^+(1,4)/(SO^+(1,1)\times SO(3))$
 be an isotropic harmonic map with complex coordinate $z$. Then any holomorphic potential of $f$ satisfies
 \[
\Xi=\sum_{j=-1}^\infty \lambda^{j}\xi_{j}\dd z, \ \hbox{ with }\ \xi_{-1}=\left(
                   \begin{array}{cc}
                    0 & \tilde{B}_1 \\
                     -\tilde{B}_1^tI_{1,1} & 0\\
                   \end{array}
                 \right) \ \hbox{ and }\ \tilde{B}_1\tilde{B}_1^t=0.
\]
Conversely, let $f$ be the harmonic map derived from a holomorphic potential $\Xi$ satisfying the condition above. Then $f=Y\wedge\hat{Y}$ is an isotropic harmonic map associated with the dual Willmore surfaces $Y$ and $\hat{Y}$.
\end{enumerate}
\end{theorem}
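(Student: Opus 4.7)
My plan is to reduce both statements to a single algebraic invariance together with a careful tracing of the conjugations occurring in Theorems \ref{thm-wu}, \ref{thm-DPW} and \ref{thm-DPW2}. The invariance I need is this: if $\Delta=\mathrm{diag}(\mathrm{b}_1,\mathrm{b}_2)\in K^{\mathbb{C}}=S(O^+(1,1,\mathbb{C})\times O(3,\mathbb{C}))$ and
\[
X=\begin{pmatrix} 0 & B_1 \\ -B_1^tI_{1,1} & 0 \end{pmatrix}\in\mathfrak{p}^{\mathbb{C}},
\]
then $\Delta X\Delta^{-1}$ has off-diagonal block $\tilde{B}_1=\mathrm{b}_1 B_1\mathrm{b}_2^{-1}$, and using $\mathrm{b}_2^{-1}(\mathrm{b}_2^{-1})^t=\mathrm{b}_2^t\mathrm{b}_2=I$ yields $\tilde{B}_1\tilde{B}_1^t=\mathrm{b}_1 B_1 B_1^t\mathrm{b}_1^t$. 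Consequently $B_1 B_1^t=0$ iff $\tilde{B}_1\tilde{B}_1^t=0$, so the isotropy condition is $K^{\mathbb{C}}$-invariant.

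For part (1), forward direction: let $F$ be an extended frame of an isotropic $f$, with Maurer-Cartan form $\alpha_\lambda=\lambda^{-1}\alpha_1'+\alpha_0+\lambda\alpha_1''$. By Wu's formula the normalized potential is $\eta=\lambda^{-1}\Delta_0\delta_1\Delta_0^{-1}\,dz$, where $\delta_1$ and $\delta_0$ are the holomorphic-in-$z$ parts at the basepoint of $\alpha_1'(\partial_z)$ and $\alpha_0'(\partial_z)$, and $\Delta_0\in K^{\mathbb{C}}$ (because $\delta_0$ is $\mathfrak{k}^{\mathbb{C}}$-valued and $\Delta_0(0)=I$). Since $\alpha_1'$ is pointwise isotropic, its restriction to $\bar z=0$, namely $\delta_1$, is isotropic as well; the invariance lemma then forces $\eta_{-1}$ to have the claimed form.

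For the converse in part (1): given $\eta$ with $\hat{B}_1\hat{B}_1^t=0$, solve $F_-^{-1}dF_-=\eta$ and Iwasawa decompose $F_-=F\cdot F_+$ with $F\in\Lambda G_\sigma$ and $F_+\in\Lambda_B^+ G^{\mathbb{C}}_\sigma$. A direct calculation gives
\[
\alpha_\lambda=F^{-1}dF=F_+\eta F_+^{-1}-dF_+\cdot F_+^{-1}.
\]
Since $dF_+\cdot F_+^{-1}$ is regular at $\lambda=0$, the $\lambda^{-1}$ coefficient of $\alpha_\lambda$ is $F_+^{(0)}\hat{B}_1\text{-block}\,(F_+^{(0)})^{-1}$, and $F_+^{(0)}:=F_+|_{\lambda=0}\in\mathfrak{B}\subset K^{\mathbb{C}}$ by Theorem \ref{thm-iwasawa}. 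The invariance lemma transfers isotropy from $\eta_{-1}$ to $\alpha_1'$, so $f$ is isotropic, and the characterization theorem stated just before Definition \ref{isotropicdef} identifies $f=Y\wedge\hat{Y}$ for a pair of dual Willmore surfaces.

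Part (2) is handled identically using Theorem \ref{thm-DPW2}. In the forward direction, write $C=F V_+$ so that $\Xi=C^{-1}dC=V_+^{-1}\alpha_\lambda V_++V_+^{-1}dV_+$ has $\lambda^{-1}$ coefficient $\xi_{-1}=(V_+^{(0)})^{-1}\alpha_1'(\partial_z)V_+^{(0)}$. Because the $\sigma$-twisting of $\Lambda^+G^{\mathbb{C}}_\sigma$ forces $V_+^{(0)}\in K^{\mathbb{C}}$, the invariance lemma gives $\tilde{B}_1\tilde{B}_1^t=0$. Conversely, given such a $\Xi$, solve $C^{-1}dC=\Xi$ and Iwasawa decompose $C=\hat F\cdot\hat V_+$; the same conjugation computation shows $\alpha_1'(\partial_z)=\hat V_+^{(0)}\xi_{-1}(\hat V_+^{(0)})^{-1}$ with $\hat V_+^{(0)}\in K^{\mathbb{C}}$, so $\hat F$ is the extended frame of an isotropic harmonic map and the same characterization theorem produces the dual Willmore pair. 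The only real obstacle in this program is confirming, in each decomposition, that the constant-in-$\lambda$ gauge factor lies in $K^{\mathbb{C}}$ rather than merely $G^{\mathbb{C}}$; this is guaranteed by the $\sigma$-twisting defining $\Lambda^+G^{\mathbb{C}}_\sigma$ together with the explicit description of $\mathfrak{B}$ in Theorem \ref{thm-iwasawa}, after which every step reduces to the invariance of $B_1B_1^t=0$ under $K^{\mathbb{C}}$-conjugation.
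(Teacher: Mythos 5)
Your proposal is correct and follows essentially the same route as the paper, whose proof is the one-line remark that the claim ``comes directly from the decompositions $F_-=F\cdot F_+$ and $C=F\cdot V_+$, and the fact that conjugation by some $T\in SO^+(1,1,\mathbb{C})\times SO(3,\mathbb{C})$ does not change the isotropic condition $B_1B_1^t=0$'' --- precisely your invariance lemma plus the gauge computations. You merely fill in the details the paper leaves implicit (Wu's formula for the forward direction of (1), the verification that the constant-in-$\lambda$ gauge factors lie in $K^{\mathbb{C}}$, and the passage from $B_1B_1^t\equiv 0$ to its holomorphic part), all of which are sound.
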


The proof comes directly from the decompositions $F=F_-\cdot F+$ and $F=C\cdot V_+$, and the fact that conjugation by some $T\in SO^+(1,1,\mathbb{C})\times SO(3,\mathbb{C})$ does not change the isotropic condition $B_1B^t_1=0$.\\

In \cite{Helein}, there is an interesting description of Willmore surfaces
M\"obius equivalent to minimal surfaces in space forms. Here we restate it as:
\begin{theorem}\label{thm-minimal}(\cite{Helein}) Let $f_h=Y\wedge \hat{Y}$ be a non-constant isotropic harmonic map.
\begin{enumerate}
\item The map $[Y]$
 is M\"obius equivalent to  a minimal surface in $\mathbb{R}^{3}$ if $\hat{Y}$ reduces to a point. In this case
\[
B_1= \left(
          \begin{array}{cc}
            b_1 &
            b_1  \\
          \end{array}
        \right)^t.
\]
\item
The map $[Y]$ is M\"obius equivalent to  a minimal surface in $\SSS^{3}$ if $f_h$ reduces to a harmonic map into $SO(4)/SO(3)$. In this case
\[
B_1= \left(
          \begin{array}{cc}
            0 &
            b_1 \\
          \end{array}
        \right)^t.
\]
\item
The map $[Y]$ is M\"obius equivalent to  a minimal surface in $H^{3}$  if $f_h$ reduces to a harmonic map into $SO^+(1,3)/SO^+(1,2)$. In this case
\[
B_1= \left(
          \begin{array}{cc}
            b_1 &
            0\\
          \end{array}
        \right)^t .
\]
\end{enumerate}
Here $b_1\in\mathbb{C}^{4}$ and $b_1^tb_1=0$.

The converse of the above results also hold. That is, if $B_1$ is (up to conjugation) of the form stated above, then $[Y]$ is M\"obius equivalent to  the corresponding minimal surface where it is an immersion.
\end{theorem}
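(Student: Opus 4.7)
The plan is to prove each of the three cases by translating the geometric condition (minimality in the given space form) into an algebraic one on the Maurer-Cartan form, and then reading off the resulting normal form of $B_1$ from the formula in the preceding Proposition. The unifying observation is that, in that formula, the two rows of $B_1$ are both proportional to the isotropic vector $(1,-i,0)\in\C^3$, with row coefficients $c_1=(1+\rho)/(2\sqrt{2})$ and $c_2=(1-\rho)/(2\sqrt{2})$; the three claimed normal forms correspond to the three orbit types of the pair $(c_1,c_2)$ under the $SO^+(1,1)$ factor of the isotropy gauge group.

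For Case 1, if $[Y]$ is M\"obius equivalent to a minimal surface in $\real^3$, one can arrange by a conformal change of the ambient space that $\hat Y$ equals the fixed null vector representing the point at infinity. Using the expansion of $\hat Y_z$ from Section \ref{section-anotherharmonicmap} together with the Willmore condition $\theta=0$ and the duality condition $\zeta=0$, the identity $\hat Y_z=\tfrac{\mu}{2}\hat Y+\rho(Y_z+\tfrac{\mu}{2}Y)$, combined with $\hat Y_z=0$, forces $\mu=0$ and $\rho=0$. Substituting into the formula for $B_1$ gives $B_1=(b_1,b_1)^t$ with $b_1=(1,-i,0)/(2\sqrt{2})$, which indeed satisfies $b_1^tb_1=0$. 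Case 2 is treated identically: a minimal surface in $\SSS^3$ has antipodal dual, so $Y+\hat Y$ is a fixed timelike vector; the condition $(Y+\hat Y)_z=0$ forces $\mu=0$ and $\rho=-1$, making the first row of $B_1$ vanish.

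Case 3 is the delicate one. The naive analog $(Y-\hat Y)_z=0$ would demand $\mu=0$ and $\rho=1$, but the identity $\rho=-2\langle\kappa,\bar\kappa\rangle$ valid under $\mu=0$ forces $\rho\le 0$, giving a contradiction. The correct picture is that the fixed spacelike vector defining the hyperbolic reduction lies in the mean curvature sphere bundle $V$ but is generally not in $\text{span}(Y,\hat Y)$ in the standard gauge: for a minimal surface in $\HHH^3$ the function $\mu$ is nontrivial, and $\rho$ becomes real and positive. A suitable gauge transformation in $SO^+(1,1)$ then rotates the frame so that the fixed spacelike vector aligns with the second frame direction; on the level of $B_1$ this sends the orbit invariant $c_2/c_1=(1-\rho)/(1+\rho)$ to zero and produces the normal form $B_1=(b_1,0)^t$. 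Making this gauge-alignment step fully explicit, and checking the reality and sign conditions on $\rho$ in the $\HHH^3$ setting, is the main technical point of the proof.

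For the converses in all three cases, one appeals to the loop-group reduction underlying the DPW construction (Theorem \ref{thm-np of HA}): the restricted form of $B_1$ is inherited by the normalized potential $\eta$, which therefore takes values in the subalgebra of $\Lambda\mathfrak{g}^{\C}_\sigma$ corresponding to the relevant sub-symmetric space, namely $SO(4)/SO(3)$, $SO^+(1,3)/SO^+(1,2)$, or the $\real^3$-reduction obtained by collapsing $\hat Y$ to a point. The Iwasawa decomposition then produces an extended frame for a harmonic map into that sub-symmetric space, and projecting yields a surface $[Y]$ that is M\"obius equivalent to a minimal surface in $\real^3$, $\SSS^3$, or $\HHH^3$ respectively.
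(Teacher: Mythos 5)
Your approach is sound, and it is essentially the one the paper itself uses for the closely related Lemma \ref{minimalcharthm}; note that the paper gives no proof of Theorem \ref{thm-minimal} at all (it is quoted from H\'elein), so the only internal comparison is with that lemma. The common mechanism, which you correctly identify, is: minimality in a space form is equivalent (Lemma \ref{lemma-minimal}) to the existence of real functions $a,b$ with $aY+b\hat Y$ a nonzero constant vector whose causal character selects the space form; writing this vector in the frame $\bigl(\tfrac{1}{\sqrt2}(Y+\hat Y),\tfrac{1}{\sqrt2}(-Y+\hat Y)\bigr)$ and differentiating kills one real combination of the two rows of $B_1$ (both of which are multiples of the isotropic vector $(1,-i,0)$), and an $SO^+(1,1)$ boost then produces the stated normal form. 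Your converse argument --- the restricted $B_1$ forces the potential into the subalgebra fixing the constant vector, and this reduction survives the Iwasawa decomposition --- is also exactly the paper's argument in Lemma \ref{minimalcharthm}.

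Two concrete corrections. In Case 2 you assert that $Y+\hat Y$ itself is constant, that this forces $\rho=-1$ on the nose, and that therefore only Case 3 requires the gauge alignment. This is wrong: for a minimal surface in $\SSS^3$ with the canonical lift, the constant timelike vector is $aY+b\hat Y$ with $ab>0$ but $a/b$ arbitrary, so one only obtains $\rho=-a/b$ real and negative (for the Clifford torus $\rho=-1/2$, cf.\ the data $(0,0,0,1/2,-1/2,0)$ in Section \ref{equisection}), and the first row of $B_1$ does \emph{not} vanish in the canonical gauge. The same $SO^+(1,1)$ boost that you reserve for Case 3 is needed here: one solves $(1+\rho)\cosh\theta_0+(1-\rho)\sinh\theta_0=0$, which is solvable precisely when $\rho<0$, exactly as in the proof of Lemma \ref{minimalcharthm}. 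Cases 2 and 3 are symmetric ($\rho<0$ versus $\rho>0$), and the boost computation you defer as ``the main technical point'' is one line. Separately, in Case 1 the normalization $\langle Y,\hat Y\rangle=-1$ prevents $\hat Y$ from being made constant as a vector when $[\hat Y]$ is a constant null direction; one only gets $\hat Y_z\equiv 0 \bmod \hat Y$, which forces $\rho=0$ but not $\mu=0$. That slip is harmless, since $\mu$ sits in $A_1$ and does not affect $B_1$, but the Case 2 misstatement and the unexecuted boost should be repaired before the argument is complete.
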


\subsection{Examples}
By implementing the Iwasawa decomposition numerically, one can compute solutions
and plot the images of Willmore surfaces with the aid of a computer.
Here are some simple examples, with images shown at Figure \ref{figureexamples}.

\begin{example} \label{ex_cliff}
Let
\begin{equation*} \eta=\lambda^{-1}\left(
                   \begin{array}{cc}
                    0 & \hat{B}_1 \\
                     -\hat{B}_1^tI_{1,1} & 0\\
                   \end{array}
                 \right)\dd z, \ \hbox{ with }\ \hat{B}_1=\left(
                                                         \begin{array}{ccc}
                                                           b_1 & b_2 \\
                                                         \end{array}
                                                       \right)^t .
\end{equation*}
It is shown in \cite{Helein},
that if one chooses
\[b_1=0,\ b_2^t =\frac{\sqrt{2}}{4}\left(
                             \begin{array}{ccccc} 1-\frac{z^2}{8} & -i(1+\frac{z^2}{8})& \frac{\sqrt{2}z}{2} \\
                             \end{array}
                           \right)
                           \]
                           one will obtain the Clifford torus in $\SSS^3$. Note that $b_2$ is exactly the Weierstrass-representation data of the Enneper surface.
                         \end{example}

\begin{figure}[ht]
\centering
$
\begin{array}{ccc}
\includegraphics[height=30mm]{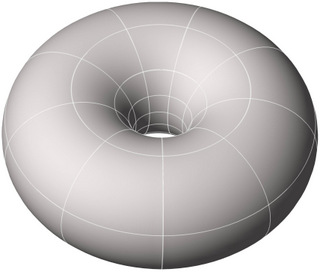} \,\, & \,\,
\includegraphics[height=30mm]{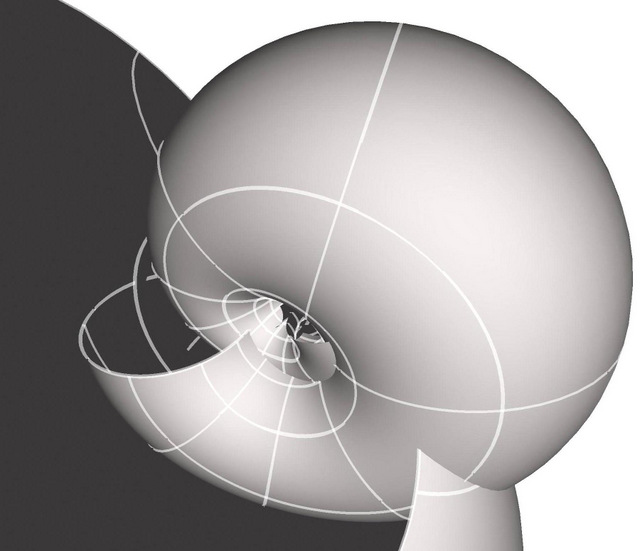} \,\, & \,\,
\includegraphics[height=30mm]{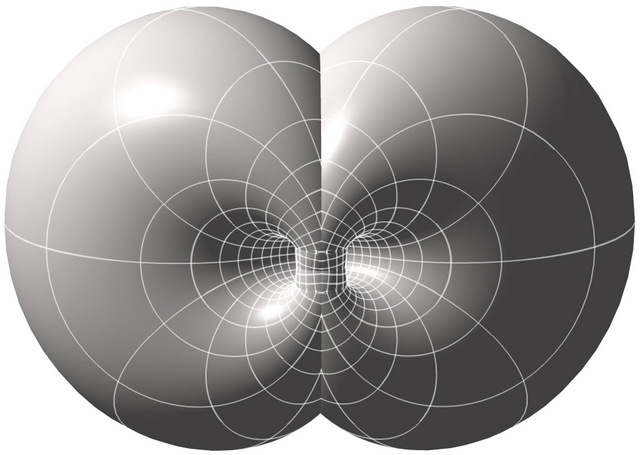}
\end{array}
$
\caption{Willmore surfaces computed with a numerical implementation of
DPW. Left: Example \ref{ex_cliff}.
Middle: Example \ref{ex_gencliff}. Right: Example \ref{ex_invcliff}.}
\label{figureexamples}
\end{figure}

                         \begin{example} \label{ex_gencliff}
												If we choose  \[b_1=\frac{i}{4}\left(
                             \begin{array}{ccccc} 1-\frac{z^2}{8} & -i(1+\frac{z^2}{8})& \frac{\sqrt{2}z}{2} \\
                             \end{array}
                           \right),\ b_2^t =\frac{\sqrt{3}}{4}\left(
                             \begin{array}{ccccc} 1-\frac{z^2}{8} & -i(1+\frac{z^2}{8})& \frac{\sqrt{2}z}{2} \\
                             \end{array}
                           \right)
                           \]
 we obtain the second image in Figure \ref{figureexamples}. Note that this Willmore surface is not M\"obius equivalent to  a minimal surface in any space form, by Theorem \ref{thm-minimal}.\end{example}

                         \begin{example} \label{ex_invcliff}
    Replacing $z$ with $1/z$ in the Clifford torus potential:
\[b_1=0,\ b_2 = \left(
                             \begin{array}{ccccc} 1-\frac{1}{z^2} & -i(1+\frac{1}{z^2})& \frac{1}{z} \\
                             \end{array}
                           \right),
                           \]
and integrating with initial condition $F(1)=I$, we  obtain the third
image in the figure. This Willmore surface is M\"obius equivalent to  a minimal surface in $\SSS^3$ by Theorem \ref{thm-minimal}.
\end{example}

\section{Bj\"{o}rling's Problem for Willmore surfaces in $\SSS^3$}
\label{section4}

We state the Bj\"{o}rling problem for Willmore surfaces in $\SSS^3$ as:
Given a sphere congruence together with two enveloping curves on an interval $\mathbb{I}$ of $\SSS^3$, does there exist a unique pair of dual Willmore surfaces such that their restrictions to the interval $\mathbb{I}$  coincide with the two enveloping curves separately and their mean curvature sphere coincides with the sphere congruence?

Concretely, we have the following result:
\begin{theorem}\label{thm-Bjorling}
Let $\psi_0=\psi_0(u):\mathbb{I}\rightarrow \SSS^4_1$ denote a non-constant real analytic sphere congruence from $\mathbb{I}$ to $\SSS^3$, with enveloping curves $[Y_0]$ and $[\hat{Y}_0]$ such that $\langle Y_0, Y_0 \rangle=\langle \hat{Y}_0, \hat{Y}_0\rangle=0$, $\langle Y_0, \hat{Y}_0\rangle=-1$, and $u$ is  the arc-parameter of $Y_0:\mathbb{I}\rightarrow \mathcal{C}_+^{n+3}$. Then there exists a unique pair of dual Willmore surfaces $y,\hat{y}:\Sigma\rightarrow \SSS^3$, with $\Sigma$ some simply connected  open subset containing $\mathbb{I}$, such that the lifts $Y,\hat{Y}$ of $y,\hat{y}$ satisfy
\[
Y|_{\mathbb{I}}=Y_0,\ \hat{Y}|_{\mathbb{I}}=\hat{Y}_0.
\]
Moreover, let $\psi:\Sigma\rightarrow \SSS^4_1$ be the conformal Gauss map of $Y$, we have
$\psi|_{\mathbb{I}}=\psi_0.$ \end{theorem}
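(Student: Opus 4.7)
The strategy is to construct, from the prescribed boundary data, the values along $\mathbb{I}$ of the extended frame of the (would-be) isotropic harmonic map $f_h = Y\wedge \hat Y$, and then apply the DPW machinery to extend this frame holomorphically to a simply connected neighborhood $\Sigma\supset \mathbb{I}$. First, from $(Y_0, \hat Y_0, \psi_0)$ I would assemble an $SO^+(1,4)$-valued frame $F_0(u)$ along $\mathbb{I}$ of the type in the proposition of Section \ref{section-anotherharmonicmap}: two columns $\tfrac{1}{\sqrt 2}(Y_0 \pm \hat Y_0)$ and the prescribed normal $\psi_0$, with the remaining two columns $P_1, P_2$ determined by the arc-length condition $|Y_0'|^2 = 1$, the pairing $\langle Y_0,\hat Y_0\rangle = -1$, and the enveloping condition of the sphere congruence, which together fix $Y_v$ and the connection function $\mu$ along the curve.

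Next, compute the Maurer--Cartan form $\alpha = F_0^{-1}\,\dd F_0$ and decompose it as $\alpha_0 + \alpha_1' + \alpha_1''$ using $\mathfrak{g} = \mathfrak{k}\oplus \mathfrak{p}$ together with the complex structure on $\mathfrak{p}$ (rotation by $i$ in the $\{P_1,P_2\}$ plane). The proposition guarantees that $\alpha_1'$ has the required block form with $B_1 B_1^t = 0$, so the loop-valued $1$-form
\[
\alpha_\lambda = \lambda^{-1}\alpha_1' + \alpha_0 + \lambda\, \alpha_1''
\]
takes values in the twisted loop algebra. Integrating $\dd F(u,\lambda) = F(u,\lambda)\alpha_\lambda$ along $\mathbb{I}$ from a base point produces the extended frame $F(u,\lambda)\colon\mathbb{I}\to \Lambda G_\sigma$ of the desired harmonic map, restricted to the curve.

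By real analyticity of the data, $F(u,\lambda)$ extends holomorphically in $u$ to a simply connected neighborhood $\Sigma$ of $\mathbb{I}$. Performing the Birkhoff decomposition $F = F_- F_+$ on $\mathbb{I}$ and continuing holomorphically gives a meromorphic map $F_-(z,\lambda)\colon \Sigma\to\Lambda^-_* G^{\mathbb{C}}_\sigma$ whose Maurer--Cartan form $\eta = F_-^{-1}\,\dd F_-$ is a holomorphic potential of the isotropic form in Theorem \ref{thm-np of HA}. The Iwasawa decomposition (Theorem \ref{thm-iwasawa}) then yields an extended frame $F(z,\bar z,\lambda)\colon \Sigma\to\Lambda G_\sigma$ of an isotropic harmonic map $f_h = Y\wedge\hat Y$, and by construction its restriction to $\mathbb{I}$ coincides with the frame built in the first two steps. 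Hence the resulting dual Willmore pair satisfies $Y|_{\mathbb{I}} = Y_0$, $\hat Y|_{\mathbb{I}} = \hat Y_0$, and the conformal Gauss map satisfies $\psi|_{\mathbb{I}} = \psi_0$.

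Uniqueness is a consequence of analytic continuation: any other dual Willmore pair satisfying the boundary conditions produces the same extended frame on $\mathbb{I}$ and hence, by the uniqueness of holomorphic extension, the same extended frame on $\Sigma$. The main obstacle lies in the first step: one must show that the triple $(Y_0,\hat Y_0,\psi_0)$ together with the normalizations canonically determines the transverse derivative $Y_v$ and the complex-valued quantities $\mu,\rho,k$ entering the Maurer--Cartan form, so that the $(1,0)/(0,1)$ splitting of $\alpha_1|_{\mathbb{I}}$ is unambiguous; once the sphere congruence identifies the tangent plane of the mean curvature sphere transverse to the curve and the enveloping condition pins down the direction of $Y_v$, the isotropy $B_1 B_1^t = 0$ on $\mathbb{I}$ follows automatically from the fact that the prescribed data arise from a dual pair.
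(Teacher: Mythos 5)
Your proposal is correct and follows essentially the same route as the paper: build the $SO^+(1,4)$-frame $F_0$ along $\mathbb{I}$ from the prescribed data, read off the loop-algebra-valued Maurer--Cartan form (the boundary potential), extend it holomorphically off the curve, and apply the Iwasawa decomposition to produce the isotropic harmonic map $Y\wedge\hat Y$ and hence the dual Willmore pair. The only cosmetic difference is your detour through the Birkhoff decomposition to a normalized potential --- the paper applies Iwasawa directly to the holomorphic extension of $F_0(u,\lambda)$ --- and, like the paper, you correctly identify the key point that the isotropic structure of $B_1$ (second column equal to $-i$ times the first) is what allows the $u$-derivative data alone to determine the full $(1,0)$-part of the Maurer--Cartan form along $\mathbb{I}$.
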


Theorem \ref{thm-Bjorling} is a straightforward corollary of the following
\begin{theorem}\label{thm-Bjorling-BP}
We retain the assumptions and notations  in Theorem \ref{thm-Bjorling}. Choose two real analytic unit vector fields $P_1$ and $P_2$ on $\mathbb{I}$ such that
\[
Y_{0u}=P_1\mod Y_0,\ ~P_2\perp\{\psi_0,Y_0,\hat{Y}_0,P_1\}\ \hbox{ and }~\det(Y_0,\hat{Y}_0,P_1,P_2,\psi_0)=1.
\]
There exist real analytical functions
  $\mu_1=\mu_1(u)$, $\rho_1=\rho_1(u)$,  $\rho_2=\rho_2(u)$, $k_1=k_1(u)$,
	$k_2=k_2(u)$  on $\mathbb{I}$ such  that
\begin{equation}\label{eq-moving-y and dual}
\left\{\begin {array}{lllll}
Y_{0u}=-\mu_1Y_0+P_1,\\
\hat{Y}_{0u}=\mu_1\hat{Y}_0+\rho_1P_1+\rho_2P_2,\\
P_{1u}=\mu_2P_2+2k_1\psi_0+\hat{Y}_0+\rho_1 Y_0,\\
P_{2u}=-\mu_2P_1-2k_2\psi_0+\rho_2 Y_0,\\
\psi_{0u}=-2k_1P_1+2k_2P_2,\\
\end {array}\right.
\end{equation}
holds. Set $\mu=\mu_1+i\mu_2,$ $k=k_1+ik_2$ and $\rho=\rho_1+i\rho_2$.
For a real analytic function $x(u)$ on $\mathbb I$, denote its analytic extension
to a simply connected open subset containing ${\mathbb I}$ by $x(z)$.
Consider the holomorphic potential
 \[
 \Xi=\left(\lambda^{-1}\mathcal{A}_1 + \mathcal{A}_0 +\lambda\mathcal{A}_{-1} \right) \dd z,
\]
with
\beqas
\mathcal{A}_0=\bbar {cc} A_1 & 0 \\ 0 & A_2 \ebar, \quad
\mathcal{A}_1 = \bbar{cc} 0 & B_1 \\ -B_1^t I_{1,1} & 0 \ebar, \quad
\mathcal{A}_{-1}(z) = \overline{\mathcal{A}_1(\bar z)},\\
A_1(z) = \bbar{cc} 0 & \mu_1(z) \\ \mu_1(z) & 0 \ebar, \quad
A_2(z) =\bbar{ccc}   0 &  -\mu_2(z)  & -2k_1(z)  \\
       \mu_{2}(z) &0 & 2k_2(z)    \\
       2k_1(z) & -2k_2(z) & 0  \ebar  \\
B_1(z) = \frac{1}{2\sqrt{2}} \bbar{ccc}
        1+\rho(z) &  -i-i\rho(z)  & 0\\
        1-\rho(z) &  -i+i\rho(z) &0 \ebar. \\
\eeqas

By DPW, Theorem \ref{thm-DPW2}, the potential $\Xi$ provides an isotropic harmonic map, together with a unique pair of dual Willmore surfaces $y,\hat{y}:\Sigma\rightarrow \SSS^3$, with $\Sigma$ some open subset containing $\mathbb{I}$, such that the lifts $Y,\hat{Y}$ of $y,\hat{y}$ satisfy
\[
Y|_{\mathbb{I}}=Y_0,\ \hat{Y}|_{\mathbb{I}}=\hat{Y}_0.\
\]
Moreover, let $\psi:\Sigma\rightarrow \SSS^4_1$ be the conformal Gauss map of $Y$.
Then
$\psi|_{\mathbb{I}}=\psi_0.$ \end{theorem}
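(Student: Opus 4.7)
The plan is to verify (i) that the potential $\Xi$ falls within the class of holomorphic potentials characterized by Theorem \ref{thm-np of HA}(2), so that the DPW construction produces a pair of dual Willmore surfaces, and (ii) that those surfaces restrict correctly to the prescribed data on $\mathbb{I}$. The existence of the real analytic functions $\mu_j,\rho_j,k_j$ in \eqref{eq-moving-y and dual} is essentially bookkeeping: they are the connection coefficients of the orthonormal--lightlike frame $F_0=(\tfrac{1}{\sqrt 2}(Y_0+\hat Y_0),\,\tfrac{1}{\sqrt 2}(\hat Y_0-Y_0),\,P_1,P_2,\psi_0)$, and their location in the matrix is forced by $\langle Y_0, Y_0\rangle =\langle\hat Y_0,\hat Y_0\rangle = 0$, $\langle Y_0,\hat Y_0\rangle = -1$, the arc-length condition $|Y_{0u}|=1$, orthonormality of $(P_1,P_2,\psi_0)$, and the Lorentzian structure of $\mathfrak{so}(1,4)$.

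For (i), a direct computation gives $B_1 B_1^t=0$: each of the four entries collapses to zero because $(1\pm\rho)^2+(-i\mp i\rho)^2=0$ and $(1+\rho)(1-\rho)+(-i-i\rho)(-i+i\rho)=(1-\rho^2)+(-1+\rho^2)=0$. Inspection of the block structure of $\mathfrak{so}(1,4)$ described in Section~\ref{section3} shows $\mathcal A_0\in\mathfrak{k}^{\mathbb C}$ and $\mathcal A_{\pm 1}\in\mathfrak{p}^{\mathbb C}$, so $\Xi$ is a valid holomorphic potential of the form required by Theorem \ref{thm-np of HA}(2). Theorem \ref{thm-DPW2} then produces an extended frame $\hat F(z,\bar z,\lambda)$ whose projected harmonic map is isotropic, yielding the pair of dual Willmore surfaces $y,\hat y$ by the converse in Theorem \ref{thm-np of HA}.

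For (ii), let $C(z,\lambda)$ solve $C^{-1}\dd C=\Xi$, $C(z_0,\lambda)=I$, with base point $z_0\in\mathbb I$. The decisive observation is the reality of $\Xi|_{\mathbb I}$: since $\mu_j,k_j,\rho_j$ are real on $\mathbb I$, we have $\overline{\mathcal A_0(u)}=\mathcal A_0(u)$ and $\overline{\mathcal A_1(u)}=\mathcal A_{-1}(u)$, so for $|\lambda|=1$ and $u\in\mathbb I$ the form $\Xi(u,\lambda)$ takes values in the real twisted loop algebra $\Lambda\mathfrak{g}_\sigma$. Consequently $C(u,\lambda)\in \Lambda G_\sigma$ for $u\in\mathbb I$; by the uniqueness of the Iwasawa decomposition (Theorem \ref{thm-iwasawa}) applied to $C=\hat F\cdot V_+$ with $V_+(z_0,\bar z_0,\lambda)=I$, we conclude $V_+(u,u,\lambda)\equiv I$ and $\hat F(u,u,\lambda)=C(u,\lambda)$ along $\mathbb I$. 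Evaluating at $\lambda=1$ on $\mathbb I$, the ODE $\hat F^{-1}\dd\hat F=(\mathcal A_1+\mathcal A_0+\mathcal A_{-1})|_u\,\dd u$ coincides with $F_0^{-1}\dd F_0$: $\mathcal A_0(u)$ reproduces the $\mathfrak{k}$-block with entries $\mu_1,\mu_2,2k_1,2k_2$ dictated by \eqref{eq-moving-y and dual}, while $\mathcal A_1(u)+\overline{\mathcal A_1(u)}=2\,\mathrm{Re}\,\mathcal A_1(u)$ reproduces the off-diagonal $\mathfrak{p}$-block with entries $\tfrac{1\pm\rho_1}{\sqrt 2},\,\pm\tfrac{\rho_2}{\sqrt 2}$ and zeros in the $\psi_0$-column (this is the routine matrix comparison). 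After left-translation by $F_0(z_0)$ (an ambient M\"obius motion that preserves harmonicity and the Willmore property), $\hat F(u,u,1)=F_0(u)$ on $\mathbb I$; extracting the null combinations of the first two columns yields $Y|_{\mathbb I}=Y_0$ and $\hat Y|_{\mathbb I}=\hat Y_0$, and the last column gives $\psi|_{\mathbb I}=\psi_0$. Uniqueness follows because the prescribed data determine $\mu_j,\rho_j,k_j$, hence $\Xi$ and the extended frame.

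The principal technical issue is the reality/Iwasawa argument: one must verify that the twisting $\sigma$ and the structure of $\Lambda G_\sigma$ are genuinely compatible with the Schwarz reflection $\mathcal A_{-1}(z)=\overline{\mathcal A_1(\bar z)}$, so that the holomorphic frame $C$ already lies in the real loop group along $\mathbb I$ and the Iwasawa splitting is trivial there. Once this is in hand, the remaining steps are a routine matrix comparison on $\mathbb I$ and applications of Theorems \ref{thm-DPW2} and \ref{thm-np of HA}.
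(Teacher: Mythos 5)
Your proposal is correct and follows essentially the same route as the paper's proof: verify the isotropy condition $B_1B_1^t=0$, integrate the boundary potential, and use the reality of $\Xi$ along $\mathbb{I}$ to conclude that the Iwasawa factorization is trivial there, so that the extended frame restricts on $\mathbb{I}$ to the prescribed frame $F_0$ and the surfaces extracted from its columns match $Y_0$, $\hat Y_0$, $\psi_0$. The only cosmetic difference is that you normalize the holomorphic frame to $I$ at the base point and left-translate by $F_0(z_0)$ afterwards, whereas the paper takes $F_0(u_0,\lambda)$ itself as the initial condition.
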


\begin{proof}
Set
\[
F_0=\left(\frac{Y_0+\hat{Y}_0}{\sqrt{2}},\frac{-Y_0+\hat{Y}_0}{\sqrt{2}}, P_1, P_2, \psi_0\right).
\]
Rewriting \eqref{eq-moving-y and dual}, we obtain
\[
F_0^{-1}\dd F_0=(\hat{\alpha}_1+\hat\alpha_0+ \hat{\alpha}_{-1})\dd u,
\]
with
\[\hat \alpha_0(u) = \mathcal{A}_0(u), \quad
  \hat \alpha_1(u) = \mathcal{A}_1(u),
\]
and $\mathcal{A}_j$ are as in the statement of the theorem.
Introducing $\lambda$, we set
\[
\hat{\alpha}_{\lambda}=(\lambda^{-1}\hat{\alpha}_1+\hat\alpha_0+\lambda\hat{\alpha}_{-1})\dd u.
\]
Let $F_0(u,\lambda)$ be the solution to $\dd F_0(u,\lambda)=F_0(u,\lambda)\hat{\alpha}_{\lambda}$, $F_0(u,\lambda)|_{\lambda=1}=F_0$.

Let $z=u+iv$ be the complex coordinate such that $u+i0$ parameterizes $\mathbb{I}$. As a consequence,  the holomorphic 1-form
$\Xi$ coincides with $\hat{\alpha}_{\lambda}$ when restricted to
 $\mathbb{I}$, since on $\mathbb{I}$ $z=u+i0=u$.
Assume that $\mathfrak{F}$ is the solution to
\[
\mathfrak{F}^{-1}d\mathfrak{F}=\Xi,\ \ \mathfrak{F}(u_0+i0,\lambda)=F_0(u_0+i0,\lambda) \hbox{ for some } u_0\in \mathbb{I}.
\]
Then
\[
\mathfrak{F}(z=u,\lambda)=F_0(u,\lambda),\ \ \forall u\in \mathbb{I}.
\]
Since $F_0(u,\lambda)\in\Lambda G_{\sigma}$ for all $u\in \mathbb{I}$,
$\mathfrak{F}(z)$ is in the big cell for $z$ in some open
subset $\mathbb{D}_0$ containing $\mathbb{I}$.  Performing the Iwasawa
decomposition of Theorem \ref{thm-iwasawa}, pointwise on on
$\mathbb{D}_0$,  we obtain
\[
\mathfrak{F}=\check{F}(z,\lambda)\cdot \check{F}_+(z,\lambda)
\]
with $\check{F}(z,\lambda)=\overline{\check{F}(z,\lambda)}$ on $\mathbb{D}_0$ and $ \check{F}_+(z,\lambda)\in \Lambda^+_{B}G^{\mathbb{C}}_{\sigma}$. By the initial condition we have
\[
\check{F}(z=u,\lambda)=F_0(u,\lambda),\ \ \forall u\in \mathbb{I}.
\]
By Theorem \ref{thm-DPW}, $\check{F}$ is an extended frame of some harmonic map.
                   It is straightforward to compute $\hat{B}_1\hat{B}_1^t \equiv0$. By Theorem \ref{thm-np of HA}, $\check{F}$ is an isotropic harmonic map. As a consequence,
setting $\check{F}=(e_{-1},e_0,e_1,e_2, \psi),$
then $Y=\frac{1}{\sqrt{2}}( {e}_{-1}- {e}_0)$, $ \hat{Y}=\frac{1}{\sqrt{2}}( e_{-1}+ e_0)$ and $\psi$ are the desired dual Willmore surfaces and their conformal Gauss map, which are unique and coincide, by construction, with $Y_0$, $\hat{Y}_0$ and $\psi_0$ on $\mathbb{I}$.
\end{proof}

The potential $\Xi$ defined in the above theorem is a special type of
holomorphic potential one can generally define by
taking the Maurer-Cartan
form of the extended frame $F$ for a harmonic map, restricting to some
curve in the domain, and then extending holomorphically.
  We call it the \emph{boundary potential}.

\subsection{Examples}
In the following examples we denote by
$E_0=(1,0,0,0,0)$, $\dots$ $E_4=(0,0,0,0,1)$ an orthonormal basis for $\real_1^5$,
with $\langle E_0\, , \, E_0 \rangle = -1$.
For convenience, we
write $X^\prime$ for $X_u$, and we abuse notation by dropping the subscripts on $Y_0$, $\hat Y_0$ and $\psi_0$.


\begin{example}  \label{example1}
Let us consider a Willmore surface in $\SSS^3$ containing the circle $(\cos u, \sin u, 0,0)$.  A lift is $Y=(1,\cos u, \sin u, 0,0)$. The simplest case is where the plane spanned by $Y$ and $\hat Y$ is constant:
without loss of generality we can take $\hat Y = (1/2)(1,-\cos u, -\sin u, 0,0)$.
From Equations (\ref{eq-moving-y and dual}), we have
\[
P_1 = Y^\prime + \mu_1 Y= (0,-\sin u, \cos u, 0,0) + \mu_1(1,\cos u, \sin u, 0,0).
\]
  The requirement that $\langle \hat Y, P_1 \rangle=0$ gives us:
\[ \mu_1 =0, \quad P_1 = (0,-\sin u, \cos u, 0,0).
\]
The equation $\hat Y^\prime = \mu_1 \hat Y + \rho_1 P_1 + \rho_2 P_2$ gives us
\[
\rho_1 = -1/2, \quad \rho_2 =0.
\]
The third equation from (\ref{eq-moving-y and dual}) is
\[
(0,-\cos u, -\sin u, 0,0)=P_1^\prime = \mu_2 P_2 + 2k_1 \psi + \hat Y + \rho_1 Y.
\]
Since $\psi$ and $P_2$ necessarily take values in $\textup{Span}\{E_3,E_4\}$, we conclude that
$\mu_2 = k_1 = 0$.
 The only remaining parameter for the potential is $k_2$, and this is
 determined by our choice of $\psi$, which could be any vector field taking
values in $\textup{Span}\{E_3,E_4\}$.  For example, $k_2 = 0$ corresponds to $\psi$ and $P_2$ being constant along the curve. The Willmore surface obtained is a round sphere.
More generally, we must have
\[
\psi = -\sin(\theta) E_3 + \cos(\theta) E_4, \quad
P_2 = -\cos(\theta)E_3 - \sin(\theta) E_4,
\]
 where $\theta$ is any real analytic map $\real \to \real$.
The last equation at (\ref{eq-moving-y and dual}), becomes
$\theta^\prime P_2 = \psi^\prime = 2k_2 P_2$, and so we conclude
that $k_2 = \theta^\prime/2$.
There are no further constraints, so we can say that all solutions
corresponding to the pair $Y$ and $\hat Y$ above are obtained from a choice of angle function $\theta$ with the boundary potential given by the data:
\[
(\mu,k,\rho) = (0,i\theta^\prime/2,-1/2).
\]
\end{example}

\begin{example}  \label{lawsonex}
A special case of the previous example is when $\theta^\prime$ is constant,
and for this we can write down the solution explicitly:  consider the immersion
\[
y(u, \tilde v)=(\cos u \cos \tilde v, \, \sin u \cos \tilde v, \,  \cos ru \, \, \sin \tilde v, \,  \sin ru \, \sin \tilde v),
\]
where $r$ is a non-zero real number. Note that the case that $r=\ell/m$ is rational corresponds
to Lawson's minimal tori and Klein bottles $\tau_{m,\ell}$ (see equation (7.1) of
\cite{Lawson1970}). The surfaces $\tau_{m,\ell}$ are all distinct compact genus zero
surfaces for distinct relatively prime pairs of positive integers $(m,\ell)$. They are non-orientable
if and only if $2$ divides $m$ or $\ell$.

\begin{figure}[ht]
\centering
$
\begin{array}{cccc}
\includegraphics[height=28mm]{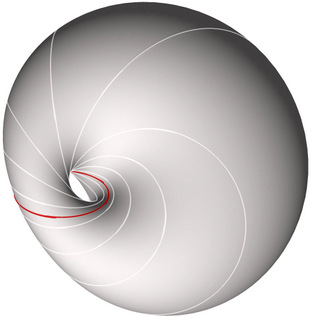} \,\, & \,\,
\includegraphics[height=28mm]{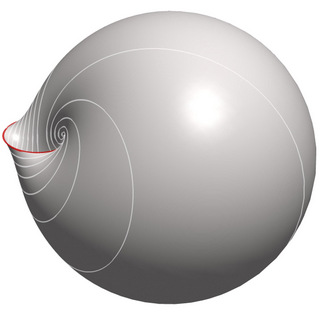} \,\, & \,\,
\includegraphics[height=28mm]{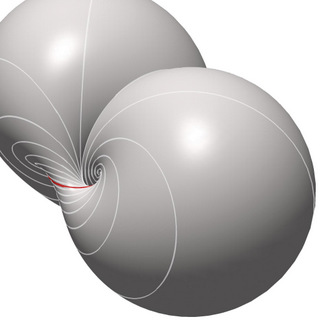} \,\, & \,\,
\includegraphics[height=28mm]{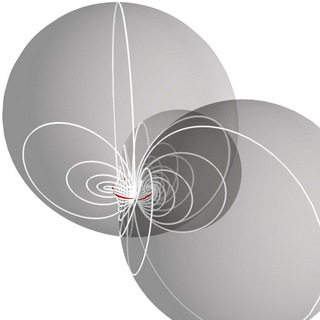}   \\
r=1 & r=2 & r=2 & r=3
\end{array}$\\
\caption{Conformally immersed Lawson surfaces. Left: Clifford torus. Middle: Klein bottle.
Right: Torus.}
\label{figureLawson}
\end{figure}

Conformal coordinates $(u,v)$ for $y$ are defined by setting
$v=\int_0^{\tilde v} (\cos ^2  w + r^2 \sin ^2 w)^{-1/2} \dd w$,
and  $u=u$.
Setting $R=\sqrt{\cos ^2\tilde v +r^2 \sin^2 \tilde v}$, a canonical lift and frame are given by:
\beqas
Y&=& (1, y),  \quad  \quad \quad \hat Y =\frac{1}{2}(1, -y),\\
P_1&=& \frac{1}{R}
    \left(0, -\sin u \cos \tilde v, \cos u \cos \tilde v, -r \sin ru \, \sin \tilde v, r \cos ru \, \sin \tilde v \right),\\
P_2 &=& (0, -\cos u \sin \tilde v, -\sin u \sin \tilde v, \cos ru \, \cos \tilde v, \sin ru \, \cos \tilde v),\\
\psi &=& \frac{1}{R}
    \left(0, -r \sin u \sin \tilde v, r \cos u \sin \tilde v, \sin ru \, \cos \tilde v, -\cos ru \, \cos \tilde v \right).\\
\eeqas
The restriction of this frame to $v=\tilde v=0$ is precisely
the frame given in  Example \ref{example1}, with $\theta^\prime=r$.
In particular the boundary potentials for Lawson's minimal surfaces $\tau_{m,\ell}$ are
given by
\[
(\mu,k,\rho) = (0,i\ell/(2m),-1/2).
\]
For the case that $r$ is not rational, one obtains an immersed cylinder.
Figure \ref{figureLawson} shows three examples computed from these potentials.
\end{example}

\section{Equivariant Surfaces} \label{equisection}
The Lawson-type surfaces of the previous example are special cases of
 Willmore surfaces invariant under the action of a $1$-parameter subgroup of $SO(4)$. More generally, by an \emph{equivariant} surface we mean one that is invariant under the action of a $1$-parameter subgroup of the M\"obius group $SO^+(1,4)$.  Such a subgroup necessarily sits inside either a copy of $SO(4)$ or of $SO^+(1,3)$, the isometry
groups of $\SSS^3$ and $\HHH^3$ respectively.
We will consider the $SO(4)$ case first, which we will call \emph{$SO(4)$-equivariant} surfaces.
Up to conjugation in $SO(4)$, such a subgroup acts on $(z,w) \in \SSS^3\subset {\mathbb C}^2$ by $(z,w) \mapsto (e^{it} z, e^{i r t} w)$, where $r \in \real$. The case $r=0$ corresponds to surfaces of revolution,
and $r= 1$ corresponds to Hopf cylinders.

\subsection{Criteria for minimality in space forms}
We are interested to distinguish those Willmore surfaces that are
``non-minimal" in the sense that they are not M\"obius equivalent
to a minimal surface in some space form. For equivariant surfaces,
the criteria is given in the lemma below.

We first remark that a standard argument \cite{BuKi} shows that a surface is equivariant, with the curve
$v=0$ an equivariant curve,  if and only if
the corresponding holomorphic potential depends only on $v$. This means that the Bj\"orling potentials
corresponding to equivariant surfaces are exactly those with $\mu$, $k$ and $\rho$ constant.  See also the direct argument below in Section \ref{sorsection}.

First we recall a well-known description of Willmore surfaces being minimal surfaces in some space forms (see for example, Page 377 of \cite{Helein}):
\begin{lemma}\label{lemma-minimal} Let  $y$ be a Willmore surface in $\SSS^n$ with $Y$ and $\hat{Y}$ a lift of itself and its dual surface. Then $y$ is M\"obius equivalent to  a minimal
surface in some $n$-dimensional space form if and only if there exist two real functions $a$ and $b$ such that $aY+b\hat{Y}\neq 0$ is constant. Moreover, the space form is
  \begin{enumerate}
	 \item $\SSS^n(r)$  if and only if $\langle aY+b\hat{Y}, aY+b\hat{Y}\rangle=-r^2$;
	  \item $\real^n$  if and only if  $\langle aY+b\hat{Y}, aY+b\hat{Y}\rangle=0$ if only if $[\hat{Y}]$ is constant;
		\item  ${\mathbb H}^n(r)$ if and only if   $\langle aY+b\hat{Y}, aY+b\hat{Y}\rangle=r^2$.
	\end{enumerate}
\end{lemma}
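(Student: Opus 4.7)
The plan rests on two classical ingredients from M\"obius surface theory, after which the statement reduces to a short calculation with the structure equations of Section \ref{section2}.

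First, I will use that each $n$-dimensional space form arises as a section of the light cone: for any nonzero constant $v \in \real^{n+2}_1$, the set $\mathcal{M}_v := \{X \in \mathcal{C}_+^{n+3} : \langle X, v\rangle = -1\}$ carries the induced metric of constant sectional curvature equal to $-\langle v, v\rangle$, so the three cases $\langle v,v\rangle < 0$, $=0$, $>0$ produce $\SSS^n(r)$, $\real^n$, $\HHH^n(r)$ respectively. Second, for a surface $[Y]:M\to Q^n$ with mean curvature sphere bundle $V_p = \mathrm{Span}_{\real}\{Y, Y_u, Y_v, \hat Y\}$, after rescaling $[Y]$ to lie in $\mathcal{M}_v$ the surface is minimal in $\mathcal{M}_v$ if and only if $v \in V_p$ for every $p$. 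This is because totally geodesic $2$-spheres in $\mathcal{M}_v$ are exactly the intersections of $\mathcal{M}_v$ with Lorentzian $4$-planes of $\real^{n+2}_1$ that contain $v$; since a $2$-sphere in a space form is umbilic, vanishing mean curvature of the mean curvature sphere is equivalent to its being totally geodesic.

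For the easy direction ($\Leftarrow$), assume $v = aY + b\hat Y$ is a nonzero constant. Pairing with $Y$ and $\hat Y$ and using $\langle Y, Y\rangle = \langle \hat Y, \hat Y\rangle = 0$, $\langle Y, \hat Y\rangle = -1$ gives $b = -\langle Y, v\rangle$, $a = -\langle \hat Y, v\rangle$, and hence $\langle v,v\rangle = -2ab$. Provided $b \not\equiv 0$, the rescaled lift $Y/b$ takes values in $\mathcal{M}_v$, and since $v \in \mathrm{Span}\{Y, \hat Y\} \subset V_p$ everywhere, the second fact above yields minimality. The three sub-cases (i)--(iii) then match the sign of $-2ab$; in case (ii), $ab = 0$ forces $a = 0$ (the alternative $b = 0$ would make $[Y]$ a single point, which is excluded), which is equivalent to $[\hat Y]$ being constant.

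For the converse ($\Rightarrow$), suppose $y$ is M\"obius-equivalent to a minimal surface in $\mathcal{M}_v$ for some constant $v \neq 0$. By the second fact, $v \in V_p$ for every $p$, so there are real-analytic functions $a,b$ (real) and $c$ (complex) on $M$ with
\[
v = aY + b\hat Y + c Y_z + \bar c Y_{\bar z}.
\]
The plan is to differentiate this identity with respect to $z$ and project onto $V^{\perp}\otimes \C$. Since $y$ is S-Willmore, the formula for $\hat Y_z$ simplifies to $\tfrac{\mu}{2}\hat Y + \rho(Y_z + \tfrac{\mu}{2}Y) \in V$, while $Y, Y_z, Y_{\bar z}, Y_{z\bar z}$ all lie in $V$ by the structure equations. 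The only term in $0 = \partial_z v$ with a nontrivial $V^\perp$-component is $c Y_{zz}$, whose normal part is $c\kappa$ by $Y_{zz} = -\tfrac{s}{2}Y + \kappa$. Hence $c\kappa \equiv 0$, so $c$ vanishes on the non-umbilic open set $\{\langle \kappa,\bar\kappa\rangle > 0\}$, and by real-analyticity $c \equiv 0$, giving $v = aY + b\hat Y$.

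The main obstacle will be justifying the ``mean curvature sphere totally geodesic in $\mathcal{M}_v$ iff $v \in V_p$ iff minimal'' equivalence cleanly; this is folklore in M\"obius geometry, but it is worth an explicit verification by writing out the second fundamental form of the rescaled lift $Y/b$ as a surface in $\mathcal{M}_v$ and observing that its trace depends only on the $V_p$-component of $v$. Once this is in hand, both implications reduce to the short algebraic computation above with the light cone pairing and the structure equations already recorded in Section \ref{section2}.
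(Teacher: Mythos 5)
The paper does not actually prove this lemma---it is recalled as ``well known'' with a citation to p.~377 of H\'elein \cite{Helein}---so there is no in-paper argument to compare against; judged on its own, your proof is the standard one and is essentially correct. The three ingredients fit together as you describe: the space forms are the sections $\{X\in\mathcal{C}_+:\langle X,v\rangle=-1\}$ of curvature determined by the sign of $\langle v,v\rangle$; minimality is equivalent to the constant vector $v$ lying in the mean curvature sphere $V_p$ for every $p$ (the great-sphere criterion for the central sphere congruence, which is classical but, as you note, deserves the explicit second-fundamental-form verification); and in the converse direction the only $V^\perp$-component of $\partial_z v$ is $c\kappa$, where the hypothesis that $\hat Y$ is the \emph{dual} surface is used precisely to guarantee $\hat Y_z\in V_{\C}$ (i.e.\ $\zeta=D_{\bar z}\kappa+\tfrac{\bar\mu}{2}\kappa=0$). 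Two minor points worth tightening: in case (ii) the conclusion $a\equiv 0$ from $ab\equiv 0$ needs the identity theorem for real-analytic functions on a connected domain (and the exclusion of the totally umbilic case, where $\kappa\equiv0$ defeats the argument $c\kappa\equiv0\Rightarrow c\equiv0$); and the precise value $-r^2$ versus $-1/r^2$ depends on the normalizations $\langle Y,\hat Y\rangle=-1$ and $\langle X,v\rangle=-1$, so only the sign trichotomy is invariantly meaningful---but the lemma itself is equally loose on this, so your treatment is consistent with the paper's conventions.
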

Applying this to equivariant Willmore surfaces in $\SSS^3$, we have
\begin{lemma} \label{minimalcharthm}
Let  $y$ be an equivariant Willmore surface generated by the boundary potential corresponding to the constants $(\mu_1,\mu_2,k_1,k_2,\rho_1,\rho_2)$.  Then
\begin{enumerate}
	 \item \label{minimalcharthm1}
	The surface $y$ is M\"obius equivalent to  a minimal
surface in $\real^3$  if and only if $\rho_1=\rho_2=0$;
	  \item \label{minimalcharthm2}
		The surface $y$ is M\"obius equivalent to  a minimal
surface in $\SSS^3$  if and only if $\mu_1=\rho_2=0 $ and $\rho_1<0$;
		\item  \label{minimalcharthm3}
		The surface $y$ is M\"obius equivalent to  a minimal
surface in $\mathbb H^3$ if and only if $\mu_1=\rho_2=0 $ and $\rho_1>0$.
	\end{enumerate}
\end{lemma}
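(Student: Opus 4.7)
The plan is to prove necessity by restricting to $\mathbb I$ and applying Lemma~\ref{lemma-minimal}, and to prove sufficiency by invoking Theorem~\ref{thm-minimal} applied to the boundary potential.

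For necessity, suppose $y$ is M\"obius equivalent to a minimal surface in the relevant space form. By Lemma~\ref{lemma-minimal} there exist real functions $a,b$ on $\Sigma$ with $c := aY + b\hat Y$ a nonzero constant vector and $\langle c,c\rangle = -2ab$ of the appropriate sign. Differentiating $c$ in $u$ along $\mathbb I$ and using \eqref{eq-moving-y and dual} together with the linear independence of $\{Y,\hat Y,P_1,P_2,\psi_0\}$, I obtain the algebraic conditions $a + b\rho_1 = 0$ and $b\rho_2 = 0$, together with the ODEs $a_u = a\mu_1$, $b_u = -b\mu_1$. Case (1) has $\langle c,c\rangle = 0$, so $ab = 0$; combined with $a+b\rho_1 = 0$ and $c\neq 0$ this forces $a=0$, $b\neq 0$, giving $\rho_1=\rho_2=0$. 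Cases (2) and (3) have $ab\neq 0$, so $\rho_2 = 0$, $\rho_1\neq 0$, and $a = -b\rho_1$; differentiating this last relation in $u$ and substituting the ODEs (using that $\rho_1$ is $u$-independent by equivariance) yields $2a\mu_1=0$, hence $\mu_1=0$. The sign of $\rho_1$ then follows from $\langle c,c\rangle = 2b^2\rho_1$.

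For sufficiency, the off-diagonal block of the boundary potential factors as
\[
B_1 = \tfrac{1}{2\sqrt 2}\bbar{c} 1+\rho \\ 1 - \rho \ebar (1, -i, 0),
\]
so its two rows are proportional. Conjugation by $T_1 \in SO^+(1,1) \subset K$ replaces the left column by $(e^t + \rho e^{-t}, e^t - \rho e^{-t})^t$; this makes the first entry vanish precisely when $\rho_2 = 0, \rho_1 < 0$ (take $e^{2t} = -\rho_1$), the second entry vanish precisely when $\rho_2 = 0, \rho_1 > 0$, and produces equal entries when $\rho = 0$. Since such a conjugation leaves $A_1$ fixed, the additional vanishing of $\mu_1$ in cases (2) and (3) is exactly what places the $\mathfrak k$-part of the gauged Maurer-Cartan form into $\mathfrak{so}(3)$, realizing the reduction of $f_h$ to a map into $SO(4)/SO(3)$ or $SO^+(1,3)/SO^+(1,2)$. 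Theorem~\ref{thm-minimal} then gives the conclusion.

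The main subtlety I expect is that $\mu_2$ imposes no constraint: a naive $v$-derivative analysis along $\mathbb I$ would seem to demand $\mu_2 = 0$, but the structure functions of an equivariant Willmore surface may vary in $v$, and the corresponding compatibility constraint $\rho_1'(0) = -2\rho_1\mu_2$ is automatically verified by the surface produced by DPW from the boundary potential. This matches the algebraic picture: $\mu_2$ lives inside the $\mathfrak{so}(3)$-block $A_2$, which already belongs entirely to the reduced $\mathfrak k$-algebra of the target minimal space form, so no obstruction arises from that quarter.
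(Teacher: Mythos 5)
Your proof is correct and follows essentially the same route as the paper's: necessity by differentiating the constant combination $aY+b\hat Y$ along $\mathbb I$ via \eqref{eq-moving-y and dual} and reading off the coefficients of the frame, and sufficiency by conjugating the boundary potential by a hyperbolic rotation in $SO^+(1,1)$ (your condition $e^{2t}=-\rho_1$ is exactly the paper's $(1+\rho_1)\cosh\theta_0+(1-\rho_1)\sinh\theta_0=0$) to reduce to the relevant subgroup and then invoking Theorem \ref{thm-minimal}. You supply slightly more detail than the paper in deriving $\mu_1=0$ from the ODEs for $a,b$ together with the constancy of $\rho_1$, but the argument is the same.
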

\begin{proof}
\ref{minimalcharthm1} $~~$ By Lemma \ref{lemma-minimal} if $y$ is M\"obius equivalent to  a minimal
surface in $\real^3$, then $[\hat{Y}]$ is constant. Hence by \eqref{eq-moving-y and dual}, $\rho_1=\rho_2=0$.

Conversely, if $\rho_1=\rho_2=0$, then
\[
\mathcal{A}_1=\left(
  \begin{array}{cc}    0 & \hat{B}_1 \\
                              -\hat{B}_1^tI_{1,1} & 0 \\
                               \end{array}
             \right)\ \hbox{ with } \hat{B}_1=\left(   \begin{array}{cc}
          \hat{b}_1^t  \\
							\hat{b}_1^t  \\
									\end{array}
													\right).
\]
By simple computation one will see that $\hat{B}_1$ being of the above form
 is conjugation invariant.
So let $F=(e_0,e_1,e_2,e_3,e_4)$ be the extended frame derived from $\Xi$. Then the $B_1$ part of the Maurer-Cartan form of $F$ has the same form, which means that
$\hat{Y}_z=\frac{1}{\sqrt{2}}( e_{-1}+ e_0)_z=0 \mod \hat{Y}$. By Lemma \ref{lemma-minimal}, $y$ is M\"obius equivalent to  a minimal
surface in $\real^3$.

\ref{minimalcharthm2} $~~$  By Lemma  \ref{lemma-minimal}, if $y$ is M\"obius equivalent to  a minimal
surface in $\SSS^3$, then there exist two real functions $a$ and $b$ such that
\[
(aY+b\hat{Y})_u=0,\ \hbox{ and }\ \langle aY+b\hat{Y}, aY+b\hat{Y}\rangle=-2ab=-r^2.
\]
By \eqref{eq-moving-y and dual}, $\mu_1=\rho_2=0, \ \hbox{ and }\   a+b\rho_1=0.$
Since $ab=r^2>0$, $\rho_1<0$.

Conversely, if $\mu_1=\rho_2=0$ and $\rho_1<0$, there exists a unique $\theta_0\in\mathbb{R}$ such that $(1+\rho_1)\cosh\theta_0+(1-\rho_1)\sinh\theta_0=0.$ Let
\[
T=\hbox{diag}(T_1,I_3), \hbox{ with } T_1=\left(
                                              \begin{array}{cc}
                                                \cosh\theta_0 & \sinh\theta_0 \\
                                                \sinh\theta_0 & \cosh\theta_0 \\
                                              \end{array}
                                            \right).
\]
Then the first row and column of $\tilde\Xi=T\Xi T^{-1}$ are both
zero.
That is, $\tilde{\Xi}$ induces a  conformal  harmonic map into $SO(4)/SO(3)=\SSS^3$, which means that the surfaces induced by $\tilde\Xi$ are M\"obius equivalent to  some minimal
surfaces in $\SSS^3$. Let $F$ be the extended frame of $\Xi$. So $\tilde{F}=TFT^{-1}$ is the extended frame of $\tilde\Xi$ and hence $y$ is
 M\"obius equivalent to  some minimal
surface  in $\SSS^3$.

The proof of \ref{minimalcharthm3} is the same as \ref{minimalcharthm2}, and is left to the interested reader.
\end{proof}

\subsection{Surfaces of revolution in $\SSS^3$}  \label{sorsection}
A \emph{rotational surface} in $\SSS^3$ is  an equivariant surface
where the $1$-parameter subgroup fixes a geodesic in $\SSS^3$, or, equivalently
 fixes a plane in $\real^4$.
Without loss of
generality, we can take the geodesic to be the unit circle in the plane $E_3 \wedge E_4$,
so that the action is $R_t (z,w) = (e^{it}z,w)$.
A point on the surface that is not a fixed point of the action is (after a rotation in the fixed plane $E_3\wedge E_4$) of the form $(a \cos \theta, a \sin \theta, b,0)$, where $a^2+b^2=1$,
and $a \neq 0$. Applying $R_t$, the
surface thus contains the curve $\gamma(t)=(a \cos t, a \sin t, b,0)$, and we write our initial curve
as
\[
y(u)= a f(u) + b E_3, \quad f(u)=\cos u E_1 + \sin u E_2, \quad a^2+b^2=1,
\quad a \neq 0.
\]
The surface normal along this curve must be of the form
\[
n(u) = -bc f + ac E_3 + d E_4,
\]
 where $c^2+d^2$=1, and the assumption that the surface
is invariant under $R_u$ means that $c$ and $d$ are constant.  The starting point for the construction is the canonical lift  $Y$ of $y$ and a general $R_u$-invariant lift $\psi$ of $n$:
\beqa \label{circle_normal_sys}
Y= \frac{1}{a}E_0 + f + \frac{b}{a}E_3, \quad \quad
\psi= (0,n)+hY
\eeqa
with
\[
f(u)=\cos t E_1 + \sin t E_2, \quad  a^2+b^2=1, \quad c^2+d^2=1, \quad a \neq 0, \quad h \in \real \nonumber,
\]
where $a$, $b$, $c$, $d$ and are constant, and the constant $h$ will be the value of the mean
curvature along the curve. We expect another parameter to appear
because we have not yet chosen $\hat Y$, but we begin by finding all possible solutions to
(\ref{circle_normal_sys}), and then identify those that are equivariant.

The last equation of (\ref{eq-moving-y and dual})  becomes:
\[
(h-bc) f^\prime = \psi^\prime = -2k_1 P_1 + 2 k_2 P_2.
\]
If $h-bc=0$ then we must have $k_1=k_2=0$ along the whole curve, and hence the curve is a line of umbilics.
If the surface is not totally umbilic, we can choose a different parallel curve as our initial curve
for the Bj\"orling problem. Hence, we assume that $h\neq bc$.  In this case, $\psi^\prime \neq 0$,
and we necessarily
have $\textup{span}(P_1, P_2, \psi) = \textup{span}(\psi, \psi^\prime, V)$,
where $V$ depends on the choice of $\hat Y$. We can therefore choose $P_1 = \psi^\prime/|\psi^\prime|$,
that is:
\[
P_1 = f^\prime, \quad k_1 = \frac{\beta}{2}, \quad k_2=0, \quad \mu_1 =0,
\quad \quad \beta:= bc-h.
\]
From the second and third equation of (\ref{eq-moving-y and dual}), one obtains
\beqas
\hat Y^\prime  & = &\rho_1 P_1 + \rho_2 P_2, \\
 P_1^\prime & = & -f = \mu_2 P_2 + \beta \psi + \hat Y + \rho_1 Y.
\eeqas
Differentiating the expression $\hat Y = - \mu_2 P_2 -f -\beta \psi -\rho_1 Y$, we have
\[
\rho_1 P_1 + \rho_2 P_2 = \hat Y^\prime = -\mu_2^\prime P_2 - \mu_2 P_2^\prime - f^\prime + \beta^2 f^\prime - \rho_1^\prime Y-\rho_1 P_1.
\]
The fourth equation of (\ref{eq-moving-y and dual}) is $P_2^\prime = -\mu_2 P_1 + \rho_2 Y$.
Inserting this above, we end up with
\[
P_1(2\rho_1 - \mu_2^2+1-\beta^2) + P_2(\rho_2 + \mu_2^\prime) + Y(\rho_1^\prime + \mu_2 \rho_2) = 0.
\]
The  vanishing of the coefficients of $P_1$, $P_2$ and $Y$ above implies that
\[
\rho_1 = \frac{1}{2}\left(\mu_2^2 + \beta^2-1\right), \quad \rho_2 = -\mu_2^\prime.
\]
The third equation, $\rho_1^\prime = -\mu_2 \rho_2$, gives nothing new, and so we retain the
function $\mu_2$ as a parameter $m$.

In summary, all possible Willmore surfaces containing the curve and surface normal specified at
(\ref{circle_normal_sys}) are given by the boundary potential data
\[
(\mu, k, \rho) = \left(im(u), \,\, \frac{\beta}{2}, \,\, \frac{1}{2}(m(u)^2+\beta^2-1)-im^\prime(u)\right),
\]
where $m(u)$ is an arbitrary function of $u$.
Three examples are computed numerically and displayed in
Figure \ref{figure1}. All have the same value for $\beta$, but with
respectively $m(u)=e^{u-\pi/2}$, $m(u) = 2\cos^2(2u)$ and
$m(u)=-1$.  An interesting result of Palmer \cite{palmer} shows
that such a Willmore surface, i.e. containing a circle and intersecting the plane of the circle with constant contact angle, cannot enclose a topological disc, unless it is part of a sphere or
a plane.

Only the last or our examples is a surface of revolution, because
we have not yet taken into account that all the geometry of the surface should be
invariant under the action of $T(u)$.  In that case, the dual surface $\hat Y$, which
is unique, must also be invariant. This, combined with the invariance of
 $P_1$ and $\psi$ implies that the vector $P_2$ is invariant too. Noting that
$\langle P_2, P_1 \rangle = \langle P_2, f^\prime \rangle=0$, this means we can write
\[
P_2 = A E_0 + B f + C E_3 + D E_4,
\]
where $A$,$B$, $C$ and $E$ are all constants. Differentiating this, the fourth equation from
(\ref{eq-moving-y and dual}) is
\[
BP_1 = B f^\prime = P_2^\prime = -m P_1 + \rho_2 Y,
\]
from which we conclude that $m=-B$ is constant and $\rho_2 = 0$.
Hence, we have the characterization:
\begin{theorem}
All Willmore surfaces of revolution in $\SSS^3$
 are given by the boundary potentials with data:
\[
(\mu, k, \rho) = \left(im, \,\, \frac{\beta}{2}, \,\, \frac{1}{2}(m^2+\beta^2-1)\right),
\quad \beta \in \real, \quad m \in \real,
\]
where $\beta=bc-h$ if $b$ and $c$ are chosen as described above,
 and $h$ is the value of the mean curvature along the
initial parallel.
\end{theorem}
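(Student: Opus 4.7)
The plan is to start from the general characterization already established earlier in the section: every Willmore surface containing the prescribed parallel $y(u)=af(u)+bE_3$ with the prescribed $R_u$-invariant normal lift $\psi$ is produced, via the Bj\"orling construction of Theorem \ref{thm-Bjorling-BP}, by a boundary potential with data
\[
(\mu, k, \rho) = \Big(\,im(u),\ \tfrac{\beta}{2},\ \tfrac{1}{2}(m(u)^2+\beta^2-1)-im'(u)\,\Big),
\]
for some real analytic function $m(u)$ on $\mathbb I$. So the first step is just to recall this and observe that among these surfaces we need only single out those for which the full geometry---not merely the curve $Y_0$ and the sphere congruence $\psi_0$---is invariant under the one-parameter group of rotations $R_u$ fixing the geodesic $E_3\wedge E_4$.

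Next I would use uniqueness of the dual surface in the umbilic-free setting (Theorem \ref{thm-Bjorling}) to argue that $\hat Y$ is necessarily $R_u$-invariant along $\mathbb I$. Indeed, applying $R_t$ to the initial data $(Y_0,\hat Y_0,\psi_0)$ produces another set of initial data for the same surface of revolution, and uniqueness of the solution forces $\hat Y_0$ to be $R_u$-invariant. Combined with the invariance of $Y$, $P_1=f'$ and $\psi$, the normalization condition $\det(Y_0,\hat Y_0,P_1,P_2,\psi_0)=1$ then forces $P_2$ to be $R_u$-invariant as well. Since $P_2$ is orthogonal to $P_1=f'$, $\psi$, $Y_0$ and $\hat Y_0$ and invariant under the circle action, we may expand it as
\[
P_2 = A E_0 + B f + C E_3 + D E_4,
\]
with constants $A,B,C,D$.

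The final step is to substitute this expression into the fourth structure equation of \eqref{eq-moving-y and dual}, namely $P_{2u}=-\mu_2 P_1 - 2k_2\psi + \rho_2 Y_0$. Since $k_2=0$ from the earlier normalization, and since differentiating our expansion gives $P_{2u}=Bf'=B P_1$, matching coefficients of $P_1$ and $Y_0$ (noting $P_1$ and $Y_0$ are linearly independent) yields $-\mu_2 = B$ (so that $m=-B$ is a \emph{constant}) and $\rho_2=0$. Plugging $m'(u)=0$ and $\rho_2=0$ into the general boundary potential above produces exactly the data $(\mu,k,\rho)=(im,\beta/2,\tfrac12(m^2+\beta^2-1))$ claimed.

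Conversely, for any real constants $m$ and $\beta$, the potential $\Xi$ built from this data depends only on the real variable $u$ (i.e.\ on $z$ with no $\bar z$ dependence and with constant coefficients), so the associated extended frame factorizes in a way that makes the resulting Willmore surface invariant under the one-parameter subgroup generated by $\hat\alpha_\lambda(\partial_v)$; by construction this subgroup acts as $R_u$ on $\SSS^3$. The only point requiring some care---and the place I expect the main technical obstacle---is the uniqueness argument forcing invariance of $\hat Y$ in the umbilic-free case; this is precisely where the assumption $h\neq bc$ (already invoked to exclude totally umbilic parallels) is used to invoke Theorem \ref{thm-Bjorling}. Once this is in hand, the remaining calculation is the routine matching of coefficients sketched above.
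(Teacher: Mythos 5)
Your argument reproduces the paper's route almost step for step: the one\-/parameter family of solutions with free function $m(u)$ from the preceding derivation, the invariance of the (unique) dual $\hat Y$ and hence of $P_2$, the expansion $P_2=AE_0+Bf+CE_3+DE_4$ with constant coefficients, and the reduction $m=-B$ constant, $\rho_2=0$ from the fourth equation of \eqref{eq-moving-y and dual}. Two remarks. The substantive one: your proof only covers $\beta\neq 0$. The entire derivation of the family $(im(u),\beta/2,\tfrac12(m^2+\beta^2-1)-im')$ rests on the assumption $h\neq bc$, i.e.\ $\beta\neq 0$, since $\beta=0$ forces $k_1=k_2\equiv 0$ and the parallel is a line of umbilics, so $P_1,P_2$ can no longer be normalized via $\psi'$. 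The theorem, however, asserts the statement for \emph{all} $\beta\in\real$, so one must separately check that the $\beta=0$ potentials (whose last row and column vanish, confining the surface to a totally geodesic $\SSS^2$) account precisely for the totally umbilic surfaces of revolution. This short verification is in fact the entire content of the paper's displayed proof, the $\beta\neq0$ case having been disposed of in the preceding text; it is the one piece your write\-/up omits. The minor remark: your justification that $\hat Y$ is $R_u$-invariant via the uniqueness in Theorem \ref{thm-Bjorling} is slightly circular, since the rotated Bj\"orling data contains $R_t\hat Y_0$ and you would need to know this equals $\hat Y_0$ before uniqueness of the solution applies. The clean argument, which is what the paper uses, is uniqueness of the \emph{dual surface}: $R_t\hat Y$ is the dual of $R_tY=Y$, and the dual of an umbilic\-/free Willmore surface in $\SSS^3$ is unique, so $R_t\hat Y=\hat Y$. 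With these two points patched, your proof is the paper's.
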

\begin{proof}
We have already shown this for the case $\beta \neq 0$ and non-totally umbilic surfaces.
If $\beta=0$ then the last row and column of the potential are zero, and so the surface
is an immersion into a totally geodesic sphere $\SSS^2 \subset \SSS^3$. Conversely,
The only totally umbilic surface of revolution in $\SSS^3$ is the totally geodesic $2$-sphere.
\end{proof}

\begin{figure}[ht]
\centering
$
\begin{array}{cccc}
\includegraphics[height=26mm]{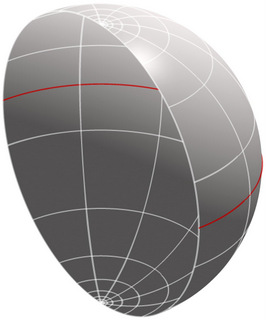} \,\,  &  \,\,
\includegraphics[height=26mm]{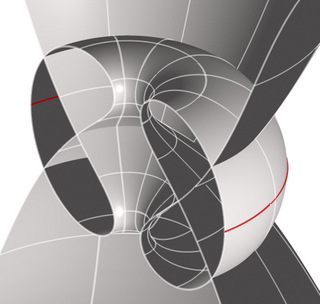}  \,\,  &  \,\,
\includegraphics[height=26mm]{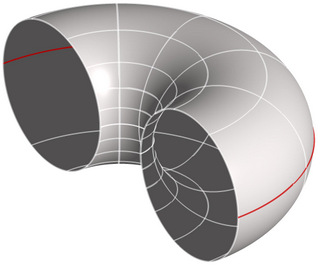}   \,\, & \,\,
\includegraphics[height=26mm]{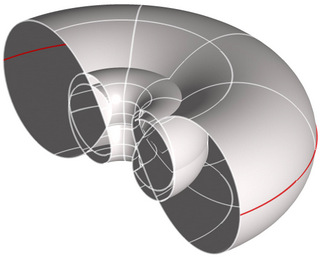}  \\
\beta=0 & \beta=1/4 & \beta=1/\sqrt{2} & \beta=7/8 \vspace{1ex}\\
\end{array}$\\
$
\begin{array}{cccc}
\includegraphics[height=26mm]{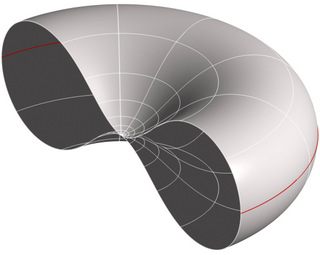}   \,\,  &  \,\,
\includegraphics[height=26mm]{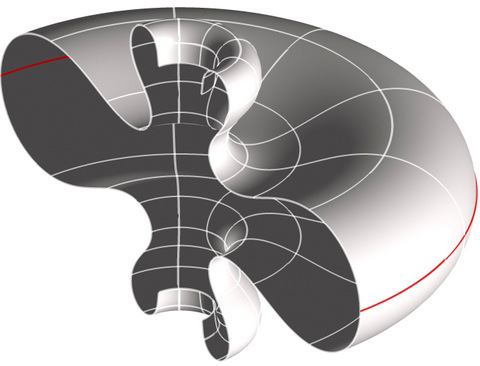}  \,\,  &  \,\,
\includegraphics[height=26mm]{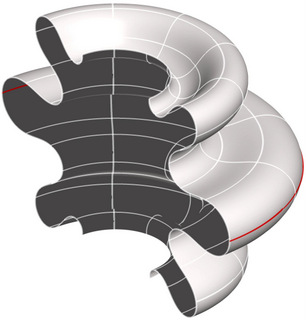}   \,\,  &  \,\,
\includegraphics[height=26mm]{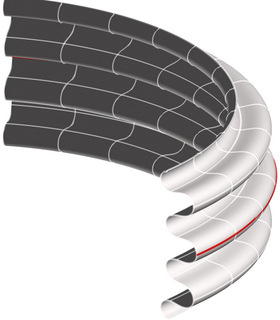}      \\
\beta=1  & \beta=1.5 & \beta=3 & \beta=15 \vspace{1ex} \\
\end{array}
$
\caption{Examples of Willmore surfaces of revolution. All are computed with $m=0$.
Surfaces are stereographically projected from the point $(0,0,0,1)$. The first four
are congruent to minimal surfaces in $\SSS^3$, the fifth  to
a catenoid, and the last three to minimal surfaces in $\HHH^3$ (Theorem \ref{equi_char_thm}).
}
\label{figureSOR1}
\end{figure}

\begin{remark}
If one is only interested in rotational surfaces up to M\"obius equivalent
then all solutions are obtained by integrating the above potential with the identity as initial condition.
To plot the surface with a suitable projection that shows the relevant symmetry,
we then premultiply the solution by the initial condition
$F_0(u_0)=\left( (Y_0+\hat Y_0)/\sqrt{2}, (-Y_0 + \hat Y_0)/\sqrt{2}, P_1, P_2,\psi_0\right)\big|_{u=u_0}$ corresponding to
a definite choice of $b$ and $c$.
For the case $\beta=0$ one only obtains totally geodesic spheres, so the initial condition is
not important. Hence all possible real values of $\beta$ are covered by taking
 $b=c=0$, $a=1$, $d=-1$ and $h$ arbitrary.
The examples shown in Figures \ref{figureSOR1} and \ref{figureSOR2}
 are computed numerically, applying this initial condition, and then stereographically projected from the point $(0,0,0,1)$.
\end{remark}

\begin{figure}[ht]
\centering
$
\begin{array}{cccc}
\includegraphics[height=26mm]{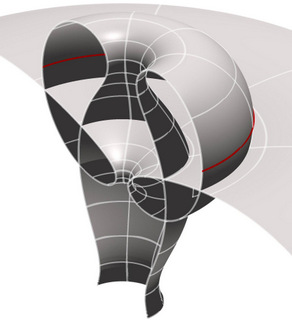} \,  &  \,
\includegraphics[height=26mm]{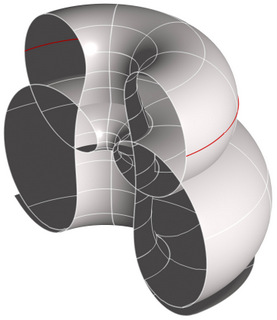}  \,  &  \,
\includegraphics[height=26mm]{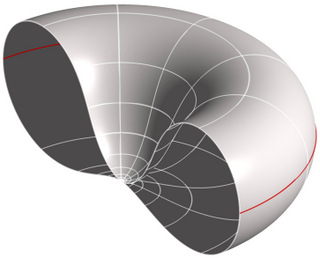}   \, & \,
\includegraphics[height=26mm]{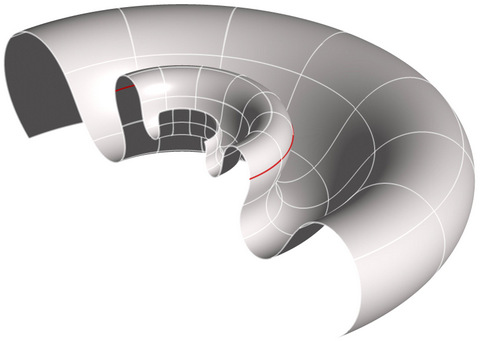}  \\
(0.5,0.25) & (0.5,\sqrt{3}/4)& (0.5,\sqrt{3}/2)& (5,1) \vspace{1ex}\\
\end{array}$\\
\caption{Surfaces of revolution with various values of $(m,\beta)$.
The non-zero value of $m$ means the surface normal along the initial curve
is not perpendicular to the axis of revolution.
}
\label{figureSOR2}
\end{figure}

\begin{remark}
On the other hand, one can obtain all solutions up to
\emph{isometric} equivalence in $\SSS^3$, if one considers all possible values of
 $b$ and $c$ in the construction and uses the correct initial condition.  To see
that this is needed for isometric equivalence, consider that if
$b=c=0$ we necessarily have $\beta=h$.  But then, for non-totally umbilic solutions
(i.e. $\beta \neq 0$)
we would need to  have $h\neq 0$.  Thus the non-trivial solutions computed with this initial condition cannot be minimal in $\SSS^3$, only M\"obius equivalent to  a
minimal surface.
\end{remark}

\subsubsection{Minimal surface representations for rotational surfaces}
It has long been known that a Willmore surface of revolution is necessarily M\"obius equivalent to  a minimal
surface in one of the three space forms (\cite{Thomsen}).  Applying Lemma \ref{minimalcharthm}, we immediately recover that result and characterize the
 corresponding potentials as follows:
\begin{theorem}  \label{equi_char_thm}
The Willmore surface of revolution corresponding to the point $(m,\beta) \in \real^2$, with $\beta \neq 0$,
 is M\"obius equivalent to  a minimal surface in:
\begin{enumerate}
\item Hyperbolic $3$-space ${\mathbb H}^3$ if and only if $m^2 + \beta^2>1$,
\item Euclidean $3$-space if and only if $m^2 + \beta^2=1$,
\item The $3$-sphere $\SSS^3$ if and only if $m^2 + \beta^2<1$.
\end{enumerate}
\end{theorem}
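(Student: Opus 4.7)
The proof plan is to directly substitute the explicit form of the rotational boundary potential into the three criteria of Lemma \ref{minimalcharthm}. By the previous theorem, the Willmore surface of revolution with parameters $(m,\beta) \in \real^2$ has boundary potential data $(\mu,k,\rho) = (im,\,\beta/2,\,(m^2+\beta^2-1)/2)$, so in terms of the six real constants in Lemma \ref{minimalcharthm} we have
\[
\mu_1 = 0, \quad \mu_2 = m, \quad k_1 = \beta/2, \quad k_2 = 0, \quad \rho_1 = \tfrac{1}{2}(m^2+\beta^2-1), \quad \rho_2 = 0.
\]
Thus the identities $\mu_1 = 0$ and $\rho_2 = 0$ hold automatically for every rotational surface, so the three characterizations of Lemma \ref{minimalcharthm} reduce entirely to conditions on the sign of $\rho_1$.

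Next I would read off the three cases. For part \ref{minimalcharthm1} of Lemma \ref{minimalcharthm} (minimal in $\real^3$), the condition $\rho_1 = \rho_2 = 0$ reduces to $m^2 + \beta^2 = 1$, giving part (2) of the theorem. For part \ref{minimalcharthm2} (minimal in $\SSS^3$), the condition $\rho_1 < 0$ reduces to $m^2 + \beta^2 < 1$, giving part (3). For part \ref{minimalcharthm3} (minimal in $\HHH^3$), the condition $\rho_1 > 0$ reduces to $m^2 + \beta^2 > 1$, giving part (1). Since these three open/closed subsets partition $\real^2$, every rotational surface falls into exactly one case, confirming the trichotomy.

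There is essentially no obstacle here: given the earlier lemmas the result is a direct computation. The only minor point worth noting is why the hypothesis $\beta \neq 0$ is imposed. When $\beta = 0$, the $(k_1,k_2)$ entries and the corresponding row/column of the potential vanish, so by the remark following the previous theorem the resulting surface is contained in a totally geodesic $\SSS^2 \subset \SSS^3$ and is totally umbilic; in this degenerate case the classification of Lemma \ref{lemma-minimal} still applies but all three space-form interpretations coexist trivially, so $\beta \neq 0$ is assumed to avoid ambiguity.
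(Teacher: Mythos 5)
Your proposal is correct and is exactly the paper's argument: the paper states that Theorem \ref{equi_char_thm} follows immediately by applying Lemma \ref{minimalcharthm} to the rotational potential data $(\mu,k,\rho)=(im,\beta/2,(m^2+\beta^2-1)/2)$, for which $\mu_1=\rho_2=0$ automatically and the trichotomy is read off from the sign of $\rho_1$. Your added remark on the role of the hypothesis $\beta\neq 0$ (the totally geodesic, totally umbilic case) matches the discussion preceding the theorem in the paper.
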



\subsection{Non-rotational $SO(4)$-equivariant surfaces}
We now consider $SO(4)$-equivariant surfaces that are not surfaces of revolution, namely
the isometries $(z,w)\mapsto (e^{it}z, e^{irt} w)$ where $r \neq 0$.
Let $p=(z,w) \subset {\mathbb C}^2$, with $|z|^2+|w|^2=1$ be an arbitrary point on the surface. After a rotation
of $\SSS^3$, we can assume that $z=(a,0)$ and $w=(b,0)$, where $a^2+b^2=1$.
We can therefore take the initial curve as $y=(a e^{it},be^{irt})$, with $r \neq 0$. An $SO(1,4)$ frame
for $\real_1^5 = \real \times {\mathbb C}^2$ along
the curve, invariant under the action of the subgroup, is given by
\beqas
f_0=(1,0,0), \quad f_1 =(0,e^{it}, 0), \quad f_2= (0,ie^{it},0), \\
f_3=(0,0,e^{i r t}), \quad f_4 =(0,0,i e^{irt}),
\eeqas
where, for computations, we note that $f_2=f_1^\prime$ and $f_4=f_3^\prime/r$.  Writing
all vectors as coordinate vectors in this frame,
we have the canonical lift for $y$ as
\[
Y=\frac{1}{R}(1,a,0,b,0), \quad R=\sqrt{a^2+b^2r^2}, \quad a^2+b^2=1, \quad r\neq0.
\]
The most general unit normal for the surface along $y$ give us, in the frame $f_i$,
\[
n = \left(-bc, -\frac{bdr}{R}, ac, \frac{ad}{R}\right), \quad \psi= (0,n)+hY, \quad c^2+d^2=1, \, h \in \real.
\]
where $h$, $c$ and $d$ are constant.  As with rotational surfaces,
 all of the vector fields, $\hat Y$, $P_1$ and
$P_2$ can be chosen to be invariant, and thus have constant coefficients in the basis $f_i$.
Hence all possible solutions are obtained using linear algebra.  We can write
\[
P_1 = Y^\prime + \mu_1 Y= \frac{1}{R}\left(\ell, \ell, a, \ell b, rb\right),
\]
where $\mu_1=\ell$ is constant. As in the rotational case, we assume that the surface is not totally umbilic,
implying that $\psi^\prime \neq 0$ and $\textup{span}(P_1,P_2,\psi) = \textup{span}(P_1,\psi^\prime,\psi)$.
To find $P_2$, we extend the orthonormal pair $(\psi, P_1)$ to an orthonormal basis
$(\psi, P_1, P_2)$ for  $\textup{span}(P_1, \psi, \psi^\prime)$, and find:
\[
P_2=  (0, bd, -bcR/r, -ad, acR) + \frac{abc(1-r^2)}{rR}\left( \ell, a\ell, a, b\ell, br\right)
  - \frac{h \ell}{r}(1, a, 0, b,0).
\]
It is also straightforward algebra to find the unique null vector field $\hat Y$
that is orthogonal to $P_1$, $P_2$ and $\psi$ and satisfies $\langle \hat Y, Y\rangle=-1$.
Substituting these expressions into (\ref{eq-moving-y and dual}), we finally obtain:
\begin{theorem} \label{equibjorlingthm}
All non-rotational equivariant Willmore surfaces in $\SSS^3$ are obtained from
the boundary potential $\Xi_{r,\theta, \phi,\ell,h}$, with $r\in \real\setminus \{ 0\},\
\ell,h \in \real ,\  \hbox{ and }\
\theta, \phi \in \real \mod 2\pi {\mathbb Z},$
defined as follows: write
\[
a:=\cos \theta, \quad b=\sin \theta, \quad c=\cos \phi, \quad d=\sin \phi, \quad
R=\sqrt{a^2+r^2b^2}.
\]
 The potential  $\Xi_{r,\theta, \phi,\ell,h}$ is the boundary potential with the
following data:
\beqas
\mu_1 &=&\ell, \quad \quad
\mu_2={\frac { ab\left( {r}^{2}-1 \right) \left(c\ell R+dr \right) }{rR}}+{
\frac {R\ell}{r}}h,\\
k_1&=&{\frac {abc \left( 1-{r}^{2} \right) }{2R}}-\frac{1}{2}h, \quad \quad
k_2 = {\frac {r}{2R}},\\
\rho_1 &=&
-\,\frac{R^2}{2}+{\frac {{a}^{2}{b}^{2
}cd\ell \left( {r}^{2}-1 \right) ^{2}}{rR}}+  \,{\frac {{\ell}^{2} \left( {a}^
{2}{b}^{2}{c}^{2} \left( {r}^{2}-1 \right) ^{2}-{r}^{2} \right) }{2{r}^
{2}}} \\
&&
+ \, hab
 \left( {r}^{2}-1 \right)  \left( {\frac {Rc{\ell}^{2}}{{r}^{2}}}+{\frac
{d\ell}{r}}+{\frac {c}{R}} \right) +
\frac{{h}^{2}}{2} \left({\frac {{R}^{2}{\ell}^{2}}{{r}^{2}}}+1 \right),
\\
\rho_2 &=&
 {\frac {a b d\ell
 \left( {r}^{2}-1 \right)}{R}}+{\frac {abc{\ell}^{2} \left( {r}^{2}-1
 \right) }{r}} +h \left( {\frac {{\ell}^{2}R}{r}}+{\frac {r}{R}} \right)
.
\eeqas
\end{theorem}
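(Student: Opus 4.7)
The plan is to reduce the theorem to a finite algebraic computation, by exploiting equivariance to render all frame vectors constant in an invariant basis, and then to feed the result into \eqref{eq-moving-y and dual} and Theorem \ref{thm-Bjorling-BP}.

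First, I would use the classification of $1$-parameter subgroups of $SO(4)$ up to conjugation to assume the action is $(z,w)\mapsto(e^{it}z,e^{irt}w)$, so the non-rotational case becomes $r\neq 0$. A generic orbit, after rotating inside each complex factor, takes the form $y(u)=(ae^{iu},be^{iru})$. The basis $\{f_0,\ldots,f_4\}$ already introduced in the text is $R_u$-invariant along this orbit, and each $f_i^\prime$ is a simple linear combination of the $f_j$. Because an equivariant Willmore surface together with its dual forces $Y$, $\hat Y$, $P_1$, $P_2$, $\psi$ all to be $R_u$-invariant, each of these five vectors has \emph{constant} coefficients in the basis $\{f_i\}$, reducing the analysis to linear algebra along a single orbit.

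Second, I would compute the five vectors explicitly. The canonical lift $Y$ is fixed by the conformal normalization $|dY|^2=|dz|^2$. A general invariant unit normal is $(0,n)+hY$ depending on $(c,d)$ with $c^2+d^2=1$ and a real parameter $h$; and $P_1=Y^\prime+\mu_1 Y$ for a single free real constant $\ell=\mu_1$. Assuming the surface is not totally umbilic forces $\psi^\prime\neq 0$, which determines $P_2$ as the unique unit vector in $\textup{span}(P_1,\psi,\psi^\prime)$ orthogonal to $P_1$ and $\psi$, with the orientation pinned down by $\det(Y,\hat Y,P_1,P_2,\psi)=1$. Finally $\hat Y$ is uniquely determined by the conformal conditions $\hat Y\perp\{P_1,P_2,\psi\}$, $\langle Y,\hat Y\rangle=-1$, $\langle\hat Y,\hat Y\rangle=0$.

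Third, plugging these five explicit constant vectors into the structure equations \eqref{eq-moving-y and dual} and matching coefficients on the basis $\{f_i\}$ yields a closed system of algebraic equations for the six real unknowns $\mu_1,\mu_2,k_1,k_2,\rho_1,\rho_2$ in terms of $(r,a,b,c,d,\ell,h)$. Solving this system produces the closed-form expressions of the theorem, and then Theorem \ref{thm-Bjorling-BP} constructs, from the resulting boundary potential $\Xi_{r,\theta,\phi,\ell,h}$, the unique Willmore surface with the prescribed initial data; by construction this surface is $R_t$-invariant, and conversely every non-rotational $SO(4)$-equivariant Willmore surface arises this way by choosing its parallel $y(u)$ as the initial curve. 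The main obstacle is pure bookkeeping in the third step: the expressions for $\mu_2$, $\rho_1$, $\rho_2$ are quartic in the data and contain the factor $R=\sqrt{a^2+r^2b^2}$, so reaching the stated simplified form requires systematic use of $a^2+b^2=1$ and $c^2+d^2=1$, and in practice is most safely verified by symbolic computation.
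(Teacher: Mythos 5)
Your proposal follows essentially the same route as the paper: reduce to the standard form $(z,w)\mapsto(e^{it}z,e^{irt}w)$ with $r\neq 0$, use the invariant frame $f_0,\dots,f_4$ so that equivariance makes $Y$, $\hat Y$, $P_1$, $P_2$, $\psi$ constant-coefficient vectors determined by linear algebra (with the non-totally-umbilic assumption giving $\mathrm{span}(P_1,P_2,\psi)=\mathrm{span}(P_1,\psi',\psi)$), and then substitute into \eqref{eq-moving-y and dual} to read off the potential data. The argument is correct and matches the paper's proof step for step.
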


\begin{figure}[ht]
\centering
$
\begin{array}{ccc}
\includegraphics[height=36mm]{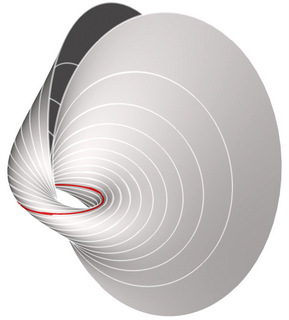} \quad &  \quad
\includegraphics[height=36mm]{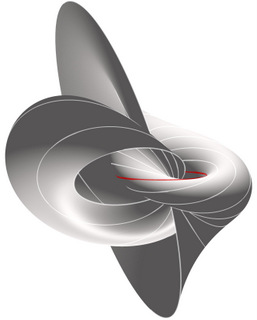} \quad &  \quad
\includegraphics[height=36mm]{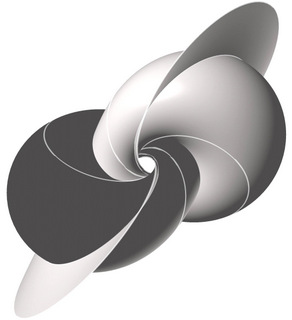}   \\
\ell=0, \,\,h=1  &   \ell=1, \,\, h=1   &   \ell=1, \,\, h=0 \vspace{1ex}\\
\end{array}$\\
\caption{Examples of Willmore Hopf cylinders. }
\label{figurehopf}
\end{figure}

\subsection{Special classes of non-rotational surfaces}
\subsubsection{Willmore Hopf cylinders, Case $r=1$:} Here the data simplifies to
 \[
(\mu_1,\mu_2,k_1,k_2,\rho_1,\rho_2) =
\left( \ell,\,h\ell, \,-\frac{h}{2}, \, \frac{1}{2}, \, \frac{{h}^{2}({\ell}^{2}+1)-\ell^2-1}{2}, \,h
 \left( {\ell}^{2}+1 \right) \right),
\]
which only depends on $h$ and $\ell$.  Hence there is a two parameter family of Willmore Hopf cylinders.  According to Lemma \ref{minimalcharthm}, the surface is M\"obius equivalent to  a minimal
surface in some space form if and only if
 $\ell=h=0$, in which case the data is of the form
$(0,0,0,1/2,-1/2,0)$,
a Clifford torus in $\SSS^3$. Otherwise, the surface is not minimal. This re-derives Proposition 2 of \cite{Pinkall1985}.

\begin{figure}[ht]
\centering
$
\begin{array}{ccc}
\includegraphics[height=30mm]{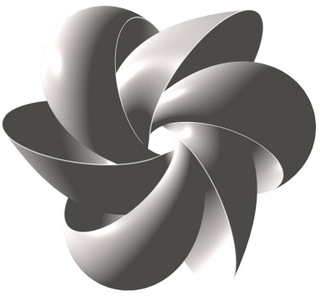} \quad & \quad
\includegraphics[height=30mm]{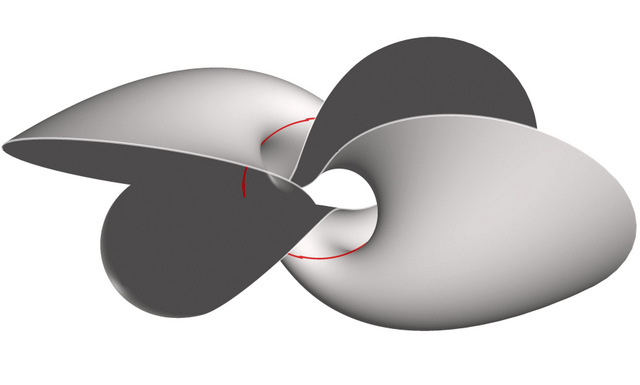} \quad & \quad
\includegraphics[height=30mm]{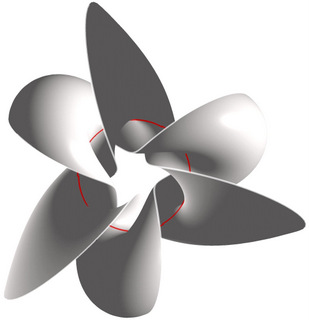}   \\
  r=\frac{3}{2}  &   r=2 & r=3 \vspace{1ex}\\
\end{array}$\\
\caption{Equivariant Willmore cylinders containing an equator (Section \ref{equicircsection}).
All have $\ell=h=1$. The value of $r$ is the number of times that the normal rotates
around the circle in one revolution. The surface is a cylinder if $r$ is rational. }
\label{figureequicirc}
\end{figure}

\begin{figure}[ht]
\centering
$
\begin{array}{cccc}
\includegraphics[height=36mm]{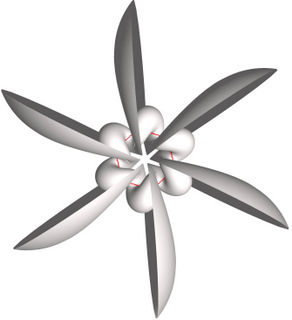} \quad &  \quad
\includegraphics[height=36mm]{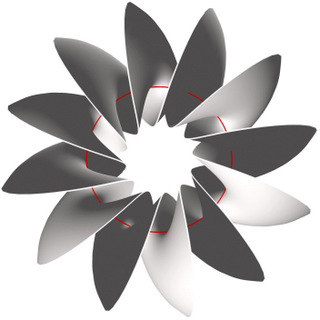} \quad &  \quad
\includegraphics[height=36mm]{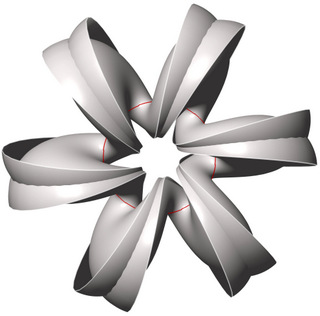}   \\
(\ell,h)=(1,1) &   (\ell,h)=(1,0)  &   (\ell,h)=(0,1) \vspace{1ex}\\
\end{array}$\\
\caption{The effect of changing the value of $h$ and $\ell$. All
have $r=6$. }
\label{figureequicirc2}
\end{figure}

\subsubsection{Equivariant surfaces containing an equator, Case $a=1$, $b=0$:}
In this case, the data $(\mu_1,\mu_2,k_1,k_2,\rho_1,\rho_2)$ are equal to
\label{equicircsection}
 \[
\left(\ell,{\frac {\ell Rh}{r}},-\frac{h}{2},{\frac {r}{2R}},\,{\frac {{h}^{2}
 \left( {\ell}^{2}+{r}^{2} \right) -{r}^{2}(1+\ell^2)}{2{R}^{2}{r}^{2}}},{\frac {
 \left( {\ell}^{2}+{r}^{2} \right) h}{Rr}}\right).
\]
The surface is minimal if and only if $h=\ell=0$, and then the data
reduces to  $(0,0,0,r/2,-1/2,0)$, the Lawson-type surfaces of Example \ref{lawsonex}.  Non-minimal examples are shown in Figures \ref{figureequicirc}
and \ref{figureequicirc2}.

\subsection{$SO(4)$-equivariant minimal surfaces}
\begin{theorem}
If a non-rotational $SO(4)$equivariant Willmore surface in $\SSS^3$  is M\"obius equivalent to  a minimal surface in some space form, that space form is necessarily $\SSS^3$. The boundary potential is given by the following
data, where $a,b,c,d,r$ and $R$ are as in Theorem \ref{equibjorlingthm}:
\[
(\mu_1,\mu_2,k_1,k_2,\rho_1,\rho_2)=
\left(0, \, \frac{abd \left( {r}^{2}-1 \right) }{R}, \, -\,\frac {abc
 \left( {r}^{2}-1 \right) }{2R}, \, \frac{r}{2R}, \, -\, \frac{{R}^{2}}{2},0 \right)
\]
\end{theorem}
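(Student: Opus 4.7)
The plan is to apply Lemma~\ref{minimalcharthm} to the general non-rotational equivariant potential of Theorem~\ref{equibjorlingthm}, ruling out two of the three possible target space forms and simplifying the potential in the remaining one.

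First I would eliminate the $\real^{3}$ case by a symmetry argument. By Lemma~\ref{lemma-minimal}, a Willmore surface M\"obius equivalent to a minimal surface in $\real^{3}$ has its dual $[\hat Y]$ degenerating to a single point $q\in\SSS^{3}$. Since the dual is M\"obius invariantly associated to $y$, and $y$ is invariant under the $1$-parameter subgroup $G_{1}\subset SO(4)$ generated by $(z,w)\mapsto(e^{it}z,e^{irt}w)$, the point $q$ must be fixed by every element of $G_{1}$. But for $r\neq 0$ the action of $G_{1}$ on $\SSS^{3}$ has no fixed points at all: $e^{it}z=z$ and $e^{irt}w=w$ for every $t\in\real$ force $z=w=0$. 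Hence this case cannot occur.

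For the $\SSS^{3}$ and $\HHH^{3}$ cases, Lemma~\ref{minimalcharthm} requires $\mu_{1}=\rho_{2}=0$. Since $\mu_{1}=\ell$ in Theorem~\ref{equibjorlingthm}, this forces $\ell=0$. With $\ell=0$ every term in the displayed formula for $\rho_{2}$ vanishes except the one proportional to $h$, leaving $\rho_{2}=hr/R$. As $r\neq 0$ and $R=\sqrt{a^{2}+r^{2}b^{2}}>0$, the condition $\rho_{2}=0$ forces $h=0$. Substituting $\ell=h=0$ into the remaining coefficients is then a direct simplification: almost every summand in $\mu_{2}$, $k_{1}$, $\rho_{1}$ is proportional to $\ell$, $h$, or their products and drops out, leaving
\[
(\mu_{1},\mu_{2},k_{1},k_{2},\rho_{1},\rho_{2})=\left(0,\ \frac{abd(r^{2}-1)}{R},\ -\frac{abc(r^{2}-1)}{2R},\ \frac{r}{2R},\ -\frac{R^{2}}{2},\ 0\right),
\]
precisely the claimed data. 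In particular $\rho_{1}=-R^{2}/2<0$ since $R>0$, which matches the $\SSS^{3}$ case of Lemma~\ref{minimalcharthm} and rules out the $\HHH^{3}$ case (where $\rho_{1}>0$ is required).

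The main obstacle is purely computational book-keeping when simplifying the long formulas for $\mu_{2}$, $\rho_{1}$ and $\rho_{2}$ under $\ell=h=0$; conceptually the proof is simply a combination of Lemma~\ref{minimalcharthm} and Theorem~\ref{equibjorlingthm}, together with the elementary observation that a non-rotational $1$-parameter subgroup of $SO(4)$ acts freely on $\SSS^{3}$.
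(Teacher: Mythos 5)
Your proposal is correct and follows essentially the same route as the paper: apply Lemma \ref{minimalcharthm} to the potential of Theorem \ref{equibjorlingthm}, deduce $\ell=0$ from $\mu_1=0$, then $h=0$ from $\rho_2=hr/R=0$, and conclude from $\rho_1=-R^2/2<0$ that only the $\SSS^3$ case survives; your algebraic simplifications all check out. The only real divergence is the $\real^3$ case: the paper notes that $[\hat Y]$ constant still forces $\mu_1=0$ locally, so that case is swept into the same computation and excluded because $\rho_1=-R^2/2\neq 0$ contradicts Lemma \ref{minimalcharthm}\ref{minimalcharthm1}, whereas you exclude it a priori by observing that the degenerate dual would have to be a fixed point of the $1$-parameter subgroup, which acts freely on $\SSS^3$ when $r\neq 0$. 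Your argument is a pleasant geometric alternative (it implicitly uses that the dual is canonically, hence equivariantly, attached to $y$), but it is not needed, since the uniform computation already disposes of all three space forms at once.
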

\begin{proof}
Considering Lemma \ref{minimalcharthm}, note that
if the surface is minimal in $\real^3$, so that $[\hat Y]$ is
constant, we can assume, at least locally, that $\hat Y$ is
constant, so that $\mu_1$ is zero, as it is also for minimal
surfaces in the other two space forms.
  Inserting $\ell=\mu_1=0$ into the potential
given at Theorem \ref{equibjorlingthm}, we obtain the potential data:
\[
\left(0,{\frac {ab d\left( {r}^{2}-1 \right)}{R}},
\,{\frac {abc(1-r^2)}{2R}}-\frac{H}{2},  \,{\frac {r}{2R}},
\,\frac{Habc({r}^{2}-1)}{R}+\frac{H^2-R^2}{2},
\, {\frac {rH}{R}}\right)
\]
in particular $\rho_2= rH/R$, and this is zero
if and only if $H=0$. With $\ell=H=0$, the data reduce to that
given in the statement of the theorem.  Since $r\neq 0$, we have $\rho_1<0$ and so the surface can only be minimal in
$\SSS^3$.
\end{proof}



\section{$SO(1,3)$-Equivariant Surfaces} \label{hypequisection}
Given a lift $Y$ of a Willmore surface $y$ in $\SSS^3$ to the light cone in $\real_1^5$, any of the projections to $\HHH^3 \subset \real_1^4$, for example
\[
(Y_0,Y_1,...,Y_4) \mapsto (Y_0,Y_1,Y_2,Y_3)/Y_4,
\]
 gives a Willmore surface (possibly with singularities) in $\HHH^3$, M\"obius equivalent to  $y$. Each choice of subgroup $SO(1,3)$ in $SO(1,4)$ corresponds to one of these
projections.  For definiteness, we choose the projection above, which corresponds
to the subgroup $SO(1,3) \times \{1\}$.  Since we have already considered the
subgroup $\SSS^1$, the only $1$-parameter subgroups left are of the form
\[
\exp \left\{ \textup{diag}\left(\bbar{cc} 0 & t \\ t & 0 \ebar ,
          \bbar{cc} 0 & rt \\ -rt & 0 \ebar, 0 \right) \right\}, \quad r \in \real.
\]
After an action of
$SO(1,1) \times SO(2)$, and a rescaling so that $\langle Y^\prime, Y^\prime \rangle=1$, we can  assume the initial curve is of the form
\[
T(u)Y(0) = \bbar {ccccc}
        \cosh u & \sinh u & 0 & 0 & 0\\
				\sinh u & \cosh u & 0 & 0 & 0\\
				0 & 0  & \cos r u & -\sin ru  & 0 \\
				0 & 0 &  \sin ru &  \cos ru & 0 \\
				0 & 0 & 0 & 0 & 1 \ebar
				\bbar{c} a\\ 0  \\ b \\ 0 \\ c \ebar,
				~~
				\begin{array} {c} a^2+r^2b^2=1, \\
				 c^2=a^2-b^2.
				\end{array}
\]
The general solution can be found as in the $SO(4)$ case. To simplify matters, we
discuss two interesting cases: one case which includes  the hyperbolic
analogue of rotational surfaces in the next
subsection, and then the case $r=1$ in the  following subsection.

\subsection{Case $a=1, \,\, b=0,\,\,c=1$:} This case includes, but is
not restricted to, the case $r=0$, because if $r$ is zero then $a=\pm 1$, and the lower right part of $T(u)$
is the $3 \times 3$ identity matrix $I_3$. In this case, there are many possible hyperbolic spaces on which
$T(u)$ acts isometrically, and we can freely rotate among the last three
coordinates without losing any generality. Hence we can assume that our
initial point is $(1, 0, 0, 0, 1)$, that is, $a=c=1$ and $b=0$.
 A suitable invariant frame along the curve is given by $\xi_i:=T(u)(E_i)$, namely:
\beqas
\xi_0=(\cosh u, \sinh u, 0,0,0), \quad \xi_1=(\sinh u, \cosh u, 0,0,0), \\
\xi_2 = (0,0,\cos ru, \sin ru, 0), \quad
\xi_3 = (0,0,-\sin ru, \cos ru, 0), \quad \xi_4 = (0,0,0,0,1).
\eeqas
Writing vectors as coordinate vectors in the frame $\xi_i$, we find, for $a=1$, $b=0$, the frame:
\beq  \label{bzeroframe}
\begin{split}
Y= (1,0,0,0,1), \quad \hat Y = \left(\frac{h^2+1}{2}, 0, h \cos \theta, h \sin \theta, \frac{h^2-1}{2}\right), \\
P_1 = (0,1,0,0,0),\quad
P_2 = (0, 0, \sin \theta, -\cos \theta, 0),\\
\psi = (h, 0, \cos \theta, \sin \theta, h )
\end{split}
\eeq
where $h$ and $\theta$ are arbitrary real constants.  Using the equations (\ref{eq-moving-y and dual}), we find the potential data:
\[
(\mu_1,\,\,\mu_2,\,\,k_1,\,\,k_2,\,\, \rho_1,\,\, \rho_2) =
   \left( 0, \,\, 0,\,\, - \frac{h}{2}, \,\, -\frac{r}{2},\,\, \frac{h^2+1}{2}, \,\,-hr \right).
\]
Note that these surfaces are congruent to minimal surfaces in $\HHH^3$ if and only if $hr=0$.
If $hr \neq0$ then they are not congruent to a minimal surface in any space form.

\subsubsection{The minimal case, $hr=0$}
Note that a discussion of rotational minimal surfaces in $\HHH^3$ can be found in \cite{doDa}.
If both $h$ and $r$ are zero then the surface is a totally umbilic sphere. Other than this
there are two types: surfaces with $r=0$, which are a hyperbolic version of surfaces of
revolution, and surfaces with $h=0$, a hyperbolic analogue of the Lawson-type surfaces
in Example \ref{lawsonex}.

\begin{figure}[ht]
\centering
$
  \begin{array}{cccc}
  \includegraphics[height=21mm]{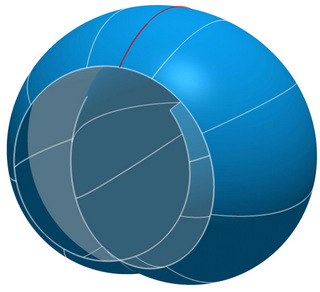} \,  &  \,
  \includegraphics[height=21mm]{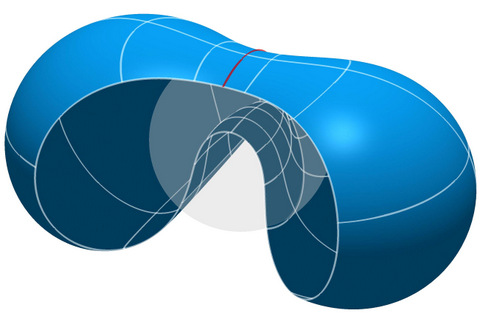}  \,  &  \,
  \includegraphics[height=21mm]{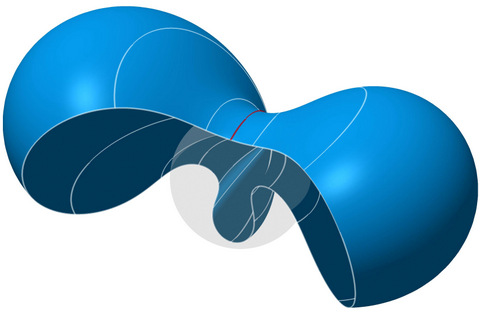}   \,  &  \,
   \includegraphics[height=21mm]{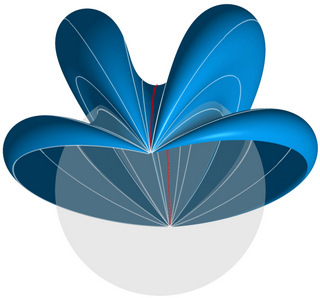}  \vspace{2ex}\\
	  \includegraphics[height=21mm]{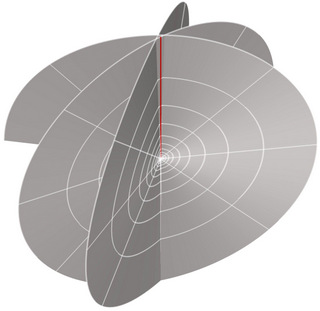}  \,  &  \,
  \includegraphics[height=21mm]{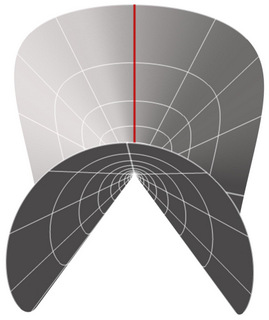}  \,  &  \,
  \includegraphics[height=21mm]{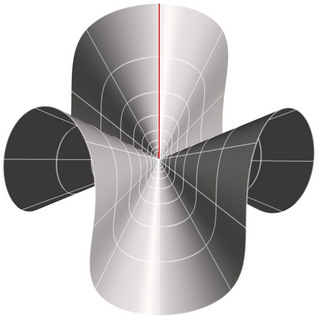}  \,  &  \,
   \includegraphics[height=21mm]{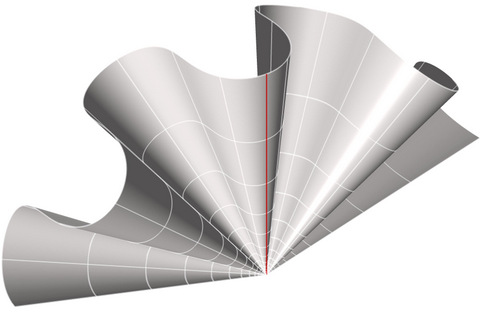}  \\
	   h=0.1 & h=0.6221 & h=1.065 & h=3
  \end{array}
$
\caption{Top: hyperbolic Willmore surfaces of revolution in $\HHH^3$ (case $r=0$), projected to the Poincar\'e ball. Bottom: M\"obius equivalent surfaces in $\SSS^3$
projected from the point $(-1,0,0,0)$. The latter are cones.}
\label{figurehypSOR}
\end{figure}

Note that in the case $r=0$, the action is by $SO(1,1) \times \{I_3\}$, so
the surfaces $(Y_0, Y_1, Y_2, Y_4)/Y_3$, and
$(Y_0, Y_1,Y_3,Y_4)/Y_2$ will also be rotationally invariant in $\HHH^3$.
Some examples from the case $r=0$ are displayed in Figure \ref{figurehypSOR}.  The initial curve is $Y(u,0) = (\cosh (u), \sinh(u), 0, 0, 1)$.
 We have plotted the projection
$(Y_0,Y_1,Y_2,Y_3,Y_4) \mapsto (Y_1,Y_2,Y_4)/(Y_0-Y_3)$, which can be regarded either as
the stereographic projection from $(0,0,1,0)$ of the surface $(Y_1,Y_2,Y_3,Y_4)/Y_0$ in
$\SSS^3$, or a Poincar\'e ball image of the surface
$(Y_0,Y_1,Y_2,Y_4)/Y_3$ in $\HHH^3$.  As surfaces in $\HHH^3$ they have several
pieces, as they pass through the boundary of the Poincar\'e ball.
A different projection of the same surfaces in
$\SSS^3$ is also shown. This corresponds to a different Willmore surface in
$\HHH^3$, which is not isometrically  equivalent, only
M\"obius equivalent.
  For certain values of $h$
(the middle two surfaces), the numerics indicate that the surface closes up in the $v$ direction.

The other type of minimal surfaces in $\HHH^3$ here are those with $h=0$, $r\neq0$. The
potential data is:
\[
(\mu, k, \rho) = \left(0, -i \frac{r}{2}, \frac{1}{2}\right),
\]
differing from the Lawson-type potentials of Example \ref{lawsonex}
only in the sign of $\rho$.  Again we have an explicit form for
the solutions: consider the surface in $\HHH^3 \subset \real_1^3 \subset \real_1^4$ given by:
\beq \label{hypLawson}
f(u,\tilde v) = \left(\cosh \tilde v \cosh u,  \,
\cosh \tilde v \sinh u,  \,  \sinh \tilde v \cos ru, \, \sinh \tilde v \sin ru \right).
\eeq
This is an analogue in $\HHH^3$ of the Lawson type surfaces, and a geodesically ruled minimal surface, that appears in \cite{doDa}.
Consider now the lift to the light cone and associated frame given by,
for $R=\sqrt{\cosh^2 \tilde v + r^2 \sinh^2 \tilde v}$:
\beqas
Y(u, \tilde v) &=& \left( f(u,\tilde v), \,\, 1 \right), \quad \quad
\hat Y (u, \tilde v) = \frac{1}{2}\left( f(u,\tilde v), \,\,   -1 \right),\\
P_1(u, \tilde v)  &=& \frac{1}{R} \left( \cosh \tilde v \sinh u,
   \cosh \tilde v \cosh u, -r \sinh \tilde v \sin ru, r \sinh \tilde v \cos ru, 0 \right), \\
P_2(u, \tilde v)  &=&  \left( \sinh \tilde v \cosh u,
  \sinh \tilde v \sinh u, \cosh \tilde v \cos ru, \cosh \tilde v \sin ru,
	  0 \right),\\
\psi (u, \tilde v)  & =& -\frac{1}{R}
\left( r \sinh \tilde v \sinh u,  r \sinh \tilde v \cosh u, \cosh \tilde v \sin ru, - \cosh \tilde v \cos ru, 0 \right).
\eeqas

\begin{figure}[ht]
\centering
$
\begin{array}{cc}
  \includegraphics[height=32mm]{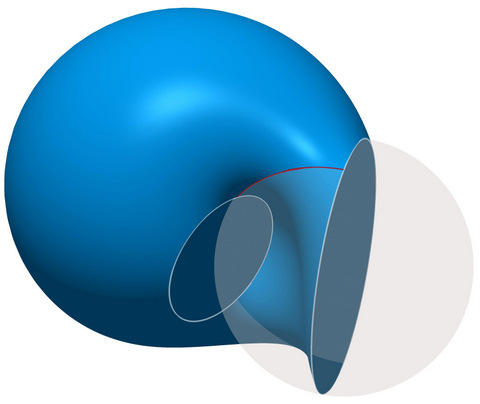} \quad &  \quad
  \includegraphics[height=32mm]{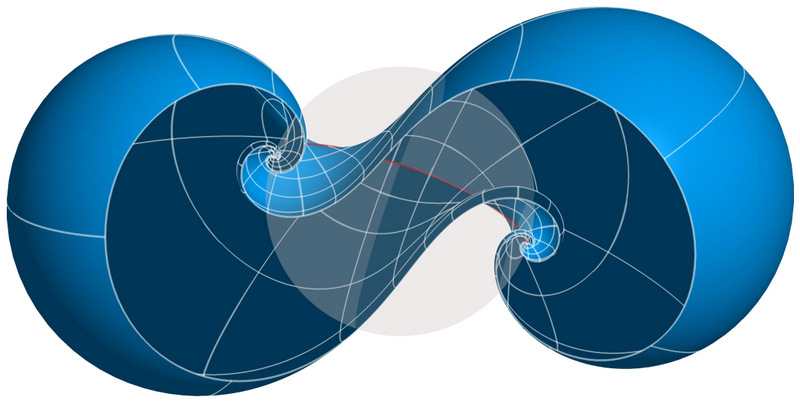} \\
  \vspace{2ex}\\
  \end{array}
$
$
\begin{array}{cccc}
  \includegraphics[height=24mm]{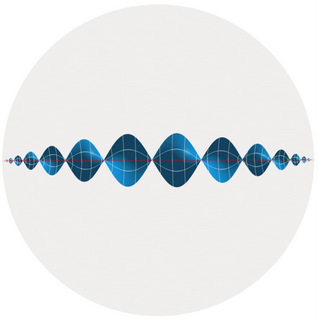} \, &  \,
  \includegraphics[height=24mm]{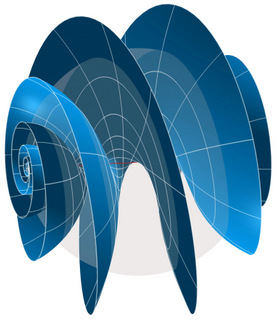}  \,  &  \,
	\includegraphics[height=24mm]{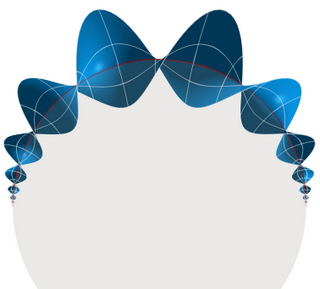}  \,  &  \,
	  \includegraphics[height=24mm]{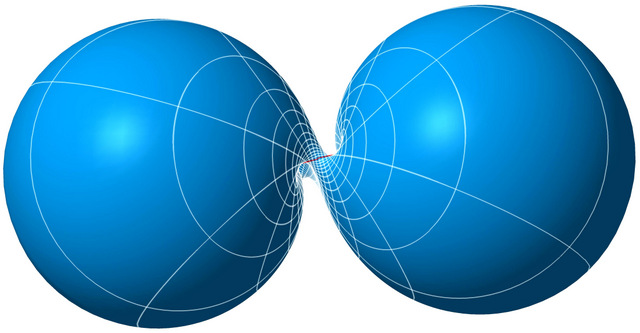}  \\
	  (0,0,0,1) & (0,0,0,1)   &  (0,0,1,0) & (0,0,1,0)
  \end{array}
$
\caption{Top: the hyperbolic Lawson surface \eqref{hypLawson2} with $r=2$,
projected from the point $(0,0,1,0)$. The $v$-curves are circles.
Bottom: two different projections of the case $r=5$. The projection from $(0,0,0,1)$ is the Poincare ball image of a minimal surface in $H^3$. The projection from $(0,0,1,0)$ is a topological cylinder.}
\label{hyplawsonr5}
\end{figure}

With respect to the coordinates $(u,v)$, where $v$ is given by
\[
v(\tilde v)= \int_0^{\tilde v}
\left( \cosh^2  \nu
 +{r}^{2}  \sinh^2  \nu \right)^{-1/2}\dd \nu,
\]
the maps $Y$ and $\hat Y$ are conformally immersed,  and canonical lifts of $f$,
by which we mean that $\langle Y, \hat Y \rangle = -1$, and $|\dd Y|^2 = |\dd z|^2$.
Additionally, $\psi_z$ is orthogonal to both $Y$ and $\hat Y$, and the frame is
orthonormal. Finally, along the curve $v=0$, this frame is nothing other than
the frame given above at (\ref{bzeroframe}), for the case $h=0$, with
$\theta=\pi/2$. The value of $\theta$ is not relevant, since it does not appear
in the potential.  Hence the
maps $Y(u,v)$, for $r\neq0$, give all the solutions for this case.

Note that the  $\tilde v$ coordinate in (\ref{hypLawson}) only gives a part of the surface,
namely that part that lies in one copy of $\HHH^3$. The map $\tilde v \mapsto v$ takes the whole
real line to a bounded open interval in $\real$.  Computing the rest of the surface
with the coordinate $v$, we find that the surface continues smoothly through the boundary.
In fact the curves $u=\textup{constant}$ are closed curves, and the surface
\beq \label{hypLawson2}
y(u, v) = \frac{1}{\cosh  \tilde v \cosh u}
 \left( \cosh  \tilde v \sinh  u,  \,  \sinh  \tilde v \cos ru,
\, \sinh \tilde v \sin ru , \, 1 \right),
\eeq
 in $\SSS^3$ is apparently a topological cylinder.

\subsubsection{The non-minimal case, $hr\neq 0$}  \label{sectionhrn0}
Examples that are not congruent to minimal surfaces are shown in Figure \ref{hypequibzero2},
where we used the projection $(Y_0,Y_1,Y_2,Y_3,Y_4) \mapsto (Y_1,Y_2,Y_3)/(Y_0+Y_4)$.
The initial curve in this projection is the straight line segment $\{(x,0,0) ~|~ -1<x<1\}$.
A different projection, $(Y_1,Y_2,Y_4)/(Y_0-Y_3)$ of the case $r=2$ is also shown
in Figure \ref{figure2}.

\begin{figure}[ht]
\centering
$
\begin{array}{ccc}
  \includegraphics[height=30mm]{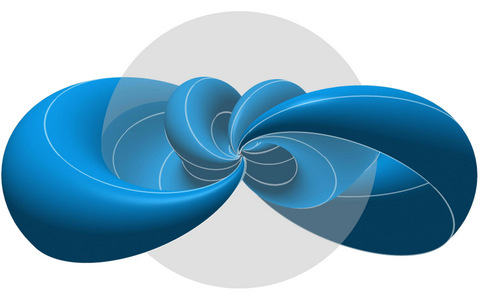} \quad &  \quad
  \includegraphics[height=30mm]{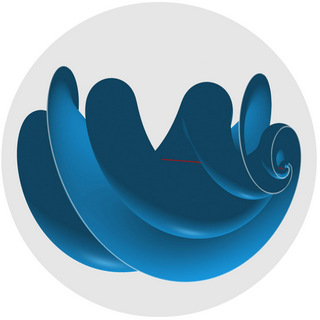}  \quad &  \quad
	  \includegraphics[height=30mm]{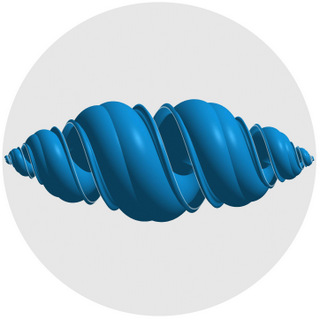}  \\
	  r=0.5 & r=2   &  r=5
  \end{array}
$
\caption{Nonminimal $SO(1,3)$-equivariant  surfaces in $\HHH^3$. All have
$b=0$ and $h=2$.  }
\label{hypequibzero2}
\end{figure}

\subsection{Hyperbolic Hopf surface analogues: Case $r=1$:}
We again write vectors as coordinate vectors in the frame $\xi_i$. The initial
curve is thus:
\[
Y=(a,0,b,0,c), \quad a^2+b^2=1, \quad c^2=a^2-b^2.
\]
After suitable isometries of the ambient space, we can assume that $a$, $b$ and $c$ are
all non-negative, so that there is a unique constant $\theta$ satisfying:
\[
a=\cos \theta , \quad b=\sin \theta , \quad c=\sqrt{\cos 2 \theta},
 \quad \quad 0\leq \theta \leq \frac{\pi}{4}.
\]

The most general choice for $\psi$ and $P_1=\psi^\prime + \mu_1 \psi$, invariant along the curve are, in the basis $\xi_i$:
\beqas
\psi &=& \left( 0, \,  -b^2 q,  \,  -\frac{c}{a}\sqrt{1-b^2 q^2}, \, abq, \, \frac{b}{a}\sqrt{1-b^2 q^2} \right) +h Y,  \\
  P_1&=& \left(0, \,a, \, 0,\, b, \,0\right) + mY,
	\quad \quad h,  \, m\, \in \real, \quad  |q| \leq \frac{1}{|b|},
\eeqas
where $m$, $q$ and $h$ are all constant.
We extend these	using linear algebra to find the most general form for
\[
P_2	= \left(-cq, \, -b \sqrt{1-b^2 q^2}, \,  0, \, a\sqrt{1-b^2q^2},\, -aq \right)
+pY,
\quad p\in \real,
\]
and finally find the unique null vector $\hat Y$ orthogonal to $P_1$, $P_2$, $\psi$ and $\psi^\prime$ satisfying $\langle \hat Y, Y \rangle = -1$.
The condition $\langle \hat Y, \psi^\prime\rangle=0$ gives a further constraint on the parameters:
\beq \label{dualityconstraint}
amh+{a}^{2}q-bcm\sqrt {1-{b}^{2}{q}^{2}}-acp=0.
\eeq
Substituting $Y$, $\hat Y$, $\psi$, $P_1$ and $P_2$ into (\ref{eq-moving-y and dual}), we obtain
the boundary potential data:
\beq  \label{hypequir1potential}
\begin{split}
(\mu_1,  \, \mu_2, \,  k_1, \, k_2) &=
  \left(m, \,\,   acq-p,  \,\,{\frac {bc\sqrt {1-b^2 {q}^{2}}}{2a}}-\frac{h}{2}, \,\,  -\frac{c}{2} \right), \\
\rho_1 &=
 {\frac {{a}^{2}{h}^{2}-2\,abc h\sqrt {1-{q}^{2}{b}^{2}}+{a}^{2}{p
}^{2}+ {c}^{4} {q}^{2}-2\,{a}^{3}cpq-{a}^{2}{m}^{2}+{c}^{2}}{2{a}^{2}}},\\
\rho_2 &=
  {\frac {{a}^{2}cmq-ach-amp- b \sqrt {1-{q}^{2}{b}^{2}}}{a}}.
\end{split}
\eeq

\begin{figure}[ht]
\centering
$
\begin{array}{cccc}
\includegraphics[height=28mm]{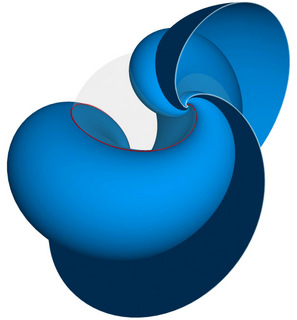} \,\, & \,\,
\includegraphics[height=28mm]{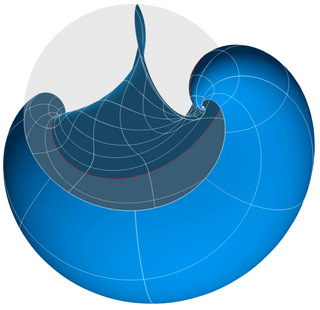} \,\, & \,\,
\includegraphics[height=28mm]{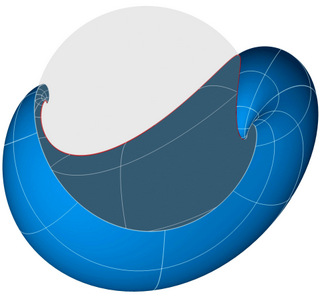} \,\, & \,\,
\includegraphics[height=28mm]{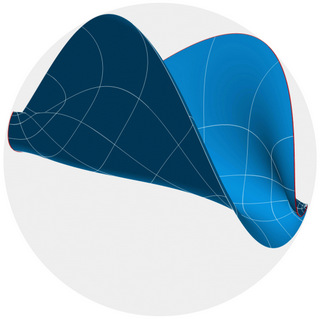}   \\
\end{array}$\\
\caption{Several partial plots of a hyperbolic Hopf-type surface. Here $c=0$
and  $h=m=p=1$. The third images shows one of the pieces outside the Poincar\'e
sphere, the fourth image one of the inside pieces. When $c=0$, the initial
curve lies on the sphere itself.  }
\label{figurehypHopf1}
\end{figure}

If $c$ is non-zero, we can eliminate $p$ by solving the constraint (\ref{dualityconstraint}) for $p$,  while if $c$ is zero, all the data simplifies
and we can eliminate $q$ instead.  We summarize this as
\begin{theorem}
All $SO(1,3)$-equivariant surfaces with $r=1$ are determined by
the boundary potentials with data as follows:
\begin{enumerate}
\item
 If $c \neq 0$, then the boundary
potential data is (\ref{hypequir1potential}), where $a= \cos \theta$,
$b=\sin \theta$, $c = \sqrt{\cos 2 \theta}$, and
\[
p={\frac {amh+{a}^{2}q -bcm \sqrt {1-{b}^{2}{q}^{2}}}{ac}}.
\]
The real parameters
$\theta$, $m$,  $q$ and $h$ are arbitrary, subject to the
conditions:
\[
0 \leq \theta < \frac{\pi}{4}, \quad
|q| \leq \frac{1}{|\sin \theta|}.
\]
\item
 If $c=0$, the boundary potentials are given by:
\[
(\mu_1, \mu_2, k_1, k_2, \rho_1, \rho_2) =
\left( m, \,\,\, -p, \, \, \,-\frac{h}{2}, \, \,0, \, \, \,
  \frac{h^2+p^2-m^2}{2}, \,\,\, -pm-\sqrt{1-h^2m^2}\right),
\]
for $h$, $m$ and $p$ arbitrary real numbers subject to the condition
$|hm|\leq 1$.
\end{enumerate}
\end{theorem}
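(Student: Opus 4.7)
The theorem is a consolidation of the invariant-frame computation carried out just before its statement; what remains is to verify exhaustiveness and to split into cases according to whether $c$ vanishes. I would organize the proof as follows.

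First, by the same argument used for $SO(4)$-equivariance, a Willmore surface is $SO^+(1,3)$-equivariant with $v=0$ an equivariant curve exactly when its boundary potential is independent of $u$. Thus by Theorem \ref{thm-Bjorling-BP} it suffices to enumerate all constant tuples $(\mu_1,\mu_2,k_1,k_2,\rho_1,\rho_2)$ arising from an invariant frame $F_0 = \bigl((Y+\hat Y)/\sqrt 2,\, (-Y+\hat Y)/\sqrt 2,\, P_1, P_2, \psi\bigr)$ along the normalized initial curve $Y=(a,0,b,0,c)$ with $a=\cos\theta$, $b=\sin\theta$, $c=\sqrt{\cos 2\theta}$.

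Second, invariance forces each of $Y,\hat Y,P_1,P_2,\psi$ to have constant coordinates in the basis $\xi_i$. The orthogonality and normalization conditions, together with $Y_u=-\mu_1Y+P_1$, successively determine $\psi$, $P_1$, $P_2$, and $\hat Y$ up to the four real parameters $h,m,q,p$ appearing in the displayed ans\"atze. The extra requirement that $\hat Y$ be the dual surface (so that $Y\wedge\hat Y$ is an isotropic harmonic map in the sense of Definition \ref{isotropicdef}, following Section \ref{section-anotherharmonicmap}) is equivalent to $\langle \hat Y,\psi^\prime\rangle=0$, which is the single algebraic relation \eqref{dualityconstraint}.

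Third, substituting $Y,\hat Y,P_1,P_2,\psi$ into the structure equations produces the formulas \eqref{hypequir1potential}, giving $(\mu_1,\mu_2,k_1,k_2,\rho_1,\rho_2)$ as polynomial expressions in $(a,b,c,h,m,q,p)$. The two-case split is forced by whether the coefficient $-ac$ of $p$ in \eqref{dualityconstraint} is nonzero. When $c\neq 0$, solve \eqref{dualityconstraint} for $p$ to obtain the expression in part (i); the parameters $(\theta,h,m,q)$ remain free subject to $0\leq\theta<\pi/4$ and $|q|\leq 1/|\sin\theta|$, the latter ensuring reality of $\sqrt{1-b^2q^2}$. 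When $c=0$, one has $\theta=\pi/4$ and $a=b=1/\sqrt 2$; the constraint collapses to $a^2q+amh=0$, i.e.\ $q=-mh/a$. Substituting this together with $c=0$ into \eqref{hypequir1potential}, and using $b^2q^2=m^2h^2$ so that $\sqrt{1-b^2q^2}=\sqrt{1-m^2h^2}$, all $\theta$-dependence drops out and the data reduce to the form stated in part (ii); the bound $|q|\leq 1/|b|$ becomes $|mh|\leq 1$.

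The principal obstacle is bookkeeping rather than conceptual: one must check that the invariant ans\"atze are the \emph{most general} such, using the residual $SO(1,1)\times SO(2)$ gauge to normalize the orientation in the $\psi$-ansatz, and verify that no additional identity is forced by differentiating the structure equations. This last point is automatic, since once $Y,\hat Y,P_1,P_2,\psi$ are chosen as above, the remaining rows of the system serve to \emph{define} $\mu_2,k_1,k_2,\rho_1,\rho_2$, and constant coefficients automatically satisfy the $u$-derivative compatibility of the Maurer--Cartan equation. Existence of the resulting surface in each case is then guaranteed by Theorem \ref{thm-Bjorling-BP} applied to the constant potential.
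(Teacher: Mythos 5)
Your proposal is correct and follows essentially the same route as the paper: the paper's argument is precisely the invariant-frame computation preceding the theorem (general ans\"atze for $\psi$, $P_1$, $P_2$, $\hat Y$ with constant coefficients in the basis $\xi_i$, the duality constraint \eqref{dualityconstraint} from $\langle\hat Y,\psi'\rangle=0$, substitution into \eqref{eq-moving-y and dual} to get \eqref{hypequir1potential}), followed by the same case split — solving for $p$ when $c\neq 0$ and for $q$ (giving $q=-mh/a$, hence $\sqrt{1-b^2q^2}=\sqrt{1-m^2h^2}$ and $|mh|\le 1$) when $c=0$. Your added remarks on exhaustiveness of the ans\"atze and automatic compatibility of constant data are sound and only make explicit what the paper leaves implicit.
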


\section{Isotropic and half-isotropic harmonic maps associated to Willmore surfaces in $\SSS^{n+2}$}  \label{section7}

 H\'elein's treatment \cite{Helein}  of  Willmore surfaces   has been generalized in \cite{Xia-Shen} to $\SSS^{n+2}$. However, the geometry inside was unclear prior to the introduction of adjoint transforms by Xiang Ma \cite{Ma2006}. One aim in this section is to clarify this interesting relationship between Willmore surfaces and isotropic harmonic maps using the language of \cite{BPP} and \cite{Ma2006}.
In Section \ref{section8}, we will use half-isotropic maps to solve the Bj\"orling problem for all Willmore surfaces in any codimension,
with or without umbilics.

\subsection{Adjoint transforms and harmonic maps associated to Willmore surfaces}

Let $Y$ be a Willmore surface as  in Section \ref{section-anotherharmonicmap}. As before, consider another  lightlike vector $\hat{Y}$ in the mean curvature sphere of $Y$, given by
\[
\hat{Y}=N+ \bar\mu Y_{z}+ \mu Y_{\bar{z}}+ \frac{1}{2}|\mu|^2Y,
\]
satisfying $\langle\hat Y, Y\rangle=-1$.
\begin{definition} \cite{Ma2006} The map into $\SSS^n$ determined by $\hat{Y}$,
defined as above, is called an adjoint transform of the Willmore surface $Y$ if the following two equations hold for $\mu$:
\begin{equation}\label{eq-h-1II}
\mu_z-\frac{\mu^2}{2}-s=0,
\mu_z-\frac{\mu^2}{2}-s=0,
\end{equation}
\begin{equation}\label{eq-h-m}
\langle D_{\bar z}\kappa+\frac{\bar\mu}{2}\kappa, D_{\bar z}\kappa+\frac{\bar\mu}{2}\kappa \rangle=\sum_j\gamma_j^2=0.
\end{equation}
\end{definition}

\begin{theorem}\cite{Ma2006}\ {\em Willmore property and existence of adjoint transform:}
  The adjoint transform $\hat{Y}$ of a Willmore surface $y$ is also a Willmore surface (may degenerate). Moreover,
\begin{enumerate}
\item
When $\langle \kappa,\kappa\rangle \equiv0$, any solution to the equation \eqref{eq-h-1II}  is a solution to both  \eqref{eq-h-1II} and \eqref{eq-h-m}.
\item When $\langle \kappa,\kappa\rangle \neq 0$ and
$\Omega \dd z^6:= \left( \langle D_{\bar{z}} \kappa,\kappa\rangle^2 -\langle \kappa,\kappa\rangle \langle D_{\bar{z}}\kappa,D_{\bar{z}}\kappa\rangle \right)  \dd z^6\neq 0,$
 there are exactly two different solutions to equation \eqref{eq-h-m}, which also solve \eqref{eq-h-1II}, that is, exactly two adjoint surfaces of $[Y]$.
\item When $\langle \kappa,\kappa\rangle \neq 0$ and $\langle D_{\bar{z}} \kappa,\kappa\rangle^2 -\langle \kappa,\kappa\rangle \langle D_{\bar{z}}\kappa,D_{\bar{z}}\kappa\rangle \equiv 0,$ there exists a unique solution to  \eqref{eq-h-m}, which also solves  \eqref{eq-h-1II}, that is, a unique adjoint surface  of $[Y]$.
\end{enumerate}
\end{theorem}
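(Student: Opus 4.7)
The plan is to separate the theorem into two independent assertions: the Willmore property of $\hat{Y}$, and the count of simultaneous solutions of (\ref{eq-h-1II})--(\ref{eq-h-m}) in each regime. Expanding the inner product in (\ref{eq-h-m}) rewrites it pointwise as the quadratic
\[
Q(\bar\mu) \;:=\; \tfrac{1}{4}\langle\kappa,\kappa\rangle\,\bar\mu^{2} \,+\, \langle D_{\bar z}\kappa,\kappa\rangle\,\bar\mu \,+\, \langle D_{\bar z}\kappa, D_{\bar z}\kappa\rangle \;=\; 0
\]
in the single unknown $\bar\mu$, with discriminant precisely $\Omega$. Hence the algebraic count of candidates in the three cases is routine; the substance is that such candidates actually satisfy the Riccati equation, and that the resulting $\hat{Y}$ is Willmore.

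For the Willmore property I would invoke the isotropic-harmonic-map characterization recorded just before Definition \ref{isotropicdef}. Assuming (\ref{eq-h-1II})--(\ref{eq-h-m}), one verifies that $f_h = Y\wedge\hat{Y}$ is an isotropic harmonic map into $SO^+(1,n+3)/(SO^+(1,1)\times SO(n+2))$: the Riccati equation delivers harmonicity, while (\ref{eq-h-m}) delivers the isotropic condition $B_1B_1^t=0$. Because $Y$ and $\hat{Y}$ occur symmetrically in $f_h$ through the two null directions $e_{-1}\pm e_0$ of any adapted frame, the same $f_h$ presents $\hat{Y}$ as a (possibly degenerate) Willmore surface paired with $Y$, finishing the first assertion.

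Case (1) then reduces to showing that (\ref{eq-h-m}) is automatic when $\langle\kappa,\kappa\rangle\equiv 0$. Differentiating this identity once with respect to $\bar z$ and using the metric compatibility of $D$ gives $\langle D_{\bar z}\kappa,\kappa\rangle \equiv 0$; differentiating a second time, and substituting the Willmore equation $D_{\bar z}D_{\bar z}\kappa + \tfrac{\bar s}{2}\kappa=0$ to handle the term $\langle D_{\bar z}D_{\bar z}\kappa,\kappa\rangle = -\tfrac{\bar s}{2}\langle\kappa,\kappa\rangle = 0$, forces $\langle D_{\bar z}\kappa, D_{\bar z}\kappa\rangle \equiv 0$. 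All three coefficients of $Q$ thus vanish identically, so any Riccati solution automatically yields an adjoint transform.

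The main obstacle is cases (2)--(3): verifying that an algebraic root of $Q$ satisfies the conjugate Riccati equation $\bar\mu_{\bar z} - \tfrac{\bar\mu^2}{2} - \bar s = 0$. My plan is to differentiate $Q(\bar\mu)\equiv 0$ with respect to $\bar z$, substitute the Willmore equation wherever a second derivative $D_{\bar z}D_{\bar z}\kappa$ appears, and reorganize terms using the symmetry $\langle D_{\bar z}\kappa,\kappa\rangle=\langle\kappa,D_{\bar z}\kappa\rangle$; a bookkeeping calculation should yield the identity
\[
Q'(\bar\mu)\bigl(\bar\mu_{\bar z} - \tfrac{\bar\mu^2}{2} - \bar s\bigr) \;=\; -\,\bar\mu\, Q(\bar\mu) \;=\; 0,
\]
where $Q'(\bar\mu) = \tfrac{1}{2}\langle\kappa,\kappa\rangle\,\bar\mu + \langle D_{\bar z}\kappa,\kappa\rangle$. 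In case (2), $\Omega\neq 0$ forces $Q'(\bar\mu)\neq 0$ at each of the two distinct roots, so the Riccati equation follows and one obtains exactly two adjoint transforms. In the coincident-root case (3) one has $Q'(\bar\mu)\equiv 0$ and this cleanest argument is inconclusive; here I would fall back on the closed form $\bar\mu = -\,2\langle D_{\bar z}\kappa,\kappa\rangle/\langle\kappa,\kappa\rangle$, differentiating it directly and substituting both the Willmore equation and the conformal Codazzi identity $\tfrac{1}{2}s_{\bar z} = 3\langle\kappa,D_z\bar\kappa\rangle + \langle D_z\kappa,\bar\kappa\rangle$ from (\ref{eq-integ}) to verify the Riccati equation by hand on the single smooth branch. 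This last step is the principal technical hurdle, but it reduces to the explicit computation in \cite{Ma2006}.
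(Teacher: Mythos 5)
The paper does not prove this theorem: it is quoted from \cite{Ma2006} with no internal argument, so there is nothing in the text to compare your proposal against, and it must be judged on its own terms. On those terms your reconstruction is essentially correct. The reformulation of \eqref{eq-h-m} as the quadratic $Q(\bar\mu)=\tfrac14\langle\kappa,\kappa\rangle\bar\mu^2+\langle D_{\bar z}\kappa,\kappa\rangle\bar\mu+\langle D_{\bar z}\kappa,D_{\bar z}\kappa\rangle=0$ with discriminant $\Omega$ is right; case (1) follows exactly as you say from two $\bar z$-differentiations of $\langle\kappa,\kappa\rangle\equiv 0$ together with the Willmore equation; and the identity you hope for in case (2) does hold: writing $a=\langle\kappa,\kappa\rangle$, $b=\langle D_{\bar z}\kappa,\kappa\rangle$, $c=\langle D_{\bar z}\kappa,D_{\bar z}\kappa\rangle$, metric compatibility of $D$ and $D_{\bar z}D_{\bar z}\kappa=-\tfrac{\bar s}{2}\kappa$ give $a_{\bar z}=2b$ and $b_{\bar z}=-\tfrac{\bar s}{2}a+c$, and a direct expansion confirms
\[
\frac{\dd}{\dd\bar z}\,Q(\bar\mu)\;=\;Q'(\bar\mu)\Bigl(\bar\mu_{\bar z}-\tfrac{\bar\mu^2}{2}-\bar s\Bigr)+\bar\mu\,Q(\bar\mu),
\]
so along a simple root the conjugate Riccati equation follows. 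Deducing the Willmore property of $\hat Y$ from the isotropic-harmonic-map characterization is also legitimate here, since the harmonicity of $Y\wedge\hat Y$ under \eqref{eq-h-1II} and the converse statement (isotropic harmonic implies adjoint Willmore pair) are direct Maurer--Cartan computations independent of the present counting result.

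The one genuine gap is case (3), which you explicitly defer to \cite{Ma2006}. The computation you postpone is in fact short and closes the gap with the same two ingredients already in play: on the double root $\bar\mu=-2b/a$ one computes, using $a_{\bar z}=2b$ and $b_{\bar z}=-\tfrac{\bar s}{2}a+c$,
\[
\bar\mu_{\bar z}-\frac{\bar\mu^2}{2}-\bar s
\;=\;-\frac{2b_{\bar z}}{a}+\frac{2b\,a_{\bar z}}{a^2}-\frac{2b^2}{a^2}-\bar s
\;=\;\frac{2\left(b^2-ac\right)}{a^2}\;=\;\frac{2\Omega}{a^2},
\]
which vanishes precisely because $\Omega\equiv0$ in this case; the Codazzi identity you anticipated needing plays no role. (The same formula shows, conversely, that the double-root branch fails the Riccati equation wherever $\Omega\neq0$, consistent with case (2).) The only remaining caveat is interpretive: the hypotheses $\langle\kappa,\kappa\rangle\neq0$ and $\Omega\neq0$ should be read as ``not identically zero,'' so your pointwise arguments apply off a discrete set and extend everywhere by real analyticity.
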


\begin{theorem}
Let  $[Y]$ be a Willmore surface. Let $\mu$ be a solution to the Riccati equation \eqref{eq-h-1II} on $U$, defining $\hat Y$
as above.
Let
$f_h:  U \to  SO^{+}(1,n+3)/(SO^+(1,1)\times SO(n+2))$ be
the map taking $p$ to  $Y(p)\wedge \hat{Y}(p)$.  Then:
\begin{enumerate}
\item (\cite{Helein}, \cite{Xia-Shen}) The map $f_h$ is harmonic,
and is called a {\em half$-$isotropic harmonic map with respect to $Y$}.
\item (\cite{Ma2006}) If $\mu$ also solves \eqref{eq-h-m},  $f_h$ is  \emph{conformally} harmonic, and is called an \emph{isotropic harmonic
map with respect to $Y$}.
\end{enumerate}
\end{theorem}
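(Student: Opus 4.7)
The plan is to verify harmonicity via the loop-parameter Maurer-Cartan criterion recalled in Section \ref{section3}. For a suitable frame $F$ of $f_h$, decompose $F^{-1}\dd F = \alpha_0 + \alpha_1' + \alpha_1''$ according to the Cartan splitting $\mathfrak{g} = \mathfrak{k} \oplus \mathfrak{p}$ and bidegree. Since $\alpha$ is already flat (as the Maurer-Cartan form of a genuine frame) and $\alpha_1'$ is $\mathfrak{p}$-valued of type $(1,0)$ on a Riemann surface, the components of $\dd\alpha_\lambda + \tfrac{1}{2}[\alpha_\lambda \wedge \alpha_\lambda]=0$ are trivial or automatic except in orders $\lambda^{\pm 1}$, and these two are complex conjugate. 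Harmonicity reduces to the single equation $\bar\partial\alpha_1' + [\alpha_0'' \wedge \alpha_1'] = 0$.

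First, I would build the frame. Mirroring the $\SSS^3$ construction in the Proposition preceding Definition \ref{isotropicdef}, set
\[
F = \left(\tfrac{1}{\sqrt{2}}(Y+\hat{Y}),\ \tfrac{1}{\sqrt{2}}(-Y+\hat{Y}),\ P_1,\ P_2,\ \psi_1, \ldots, \psi_n\right),
\]
with $\tfrac{1}{2}(P_1-iP_2) = Y_z + \tfrac{\mu}{2}Y$ and $\{\psi_j\}$ a real orthonormal frame of $V^\perp$. The Riccati equation \eqref{eq-h-1II} is exactly the vanishing of $\theta = \mu_z - \mu^2/2 - s$ in the expression for $\hat Y_z$ above \eqref{eq-h}; this is what forces the $(1,0)$-part of $F^{-1}\dd F$ restricted to $\mathfrak{p}$ to take the block off-diagonal form
\[
\alpha_1' = \bbar{cc} 0 & B_1 \\ -B_1^t I_{1,1} & 0 \ebar \dd z, \qquad
B_1 = \frac{1}{2\sqrt{2}}\bbar{ccc} 1+\rho & -i(1+\rho) & 2\sqrt{2}\,\gamma^t \\ 1-\rho & -i(1-\rho) & -2\sqrt{2}\,\gamma^t \ebar,
\]
where $\gamma = (\gamma_1,\ldots,\gamma_n)^t \in \C^n$ collects the components of $\zeta = D_{\bar z}\kappa + \tfrac{\bar\mu}{2}\kappa$ in the frame $\{\psi_j\}$.

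For part (1), I would verify the equation $\bar\partial\alpha_1' + [\alpha_0'' \wedge \alpha_1']=0$ directly from this form. The $\bar z$-derivatives of $\rho$ combine with the commutators from the $A_1$- and $(P_1,P_2)$-blocks of $\alpha_0''$ through the conformal Codazzi identity in \eqref{eq-integ}. The $\bar z$-derivatives of the $\gamma_j$, together with the normal-connection part of $\alpha_0''$, assemble into $D_{\bar z}\zeta$. A short calculation using the Willmore equation of Theorem \ref{thm-willmore}(iii) together with the complex conjugate of the Riccati equation \eqref{eq-h-1II} gives $D_{\bar z}\zeta = \tfrac{\bar\mu}{2}\zeta$, and this residual term is absorbed by the commutator contribution coming from the $\bar\mu/2$ entry of $A_1$ in $\alpha_0''$. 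This establishes harmonicity.

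For part (2), under the additional hypothesis \eqref{eq-h-m}, I would verify the isotropy condition $B_1 B_1^t = 0$ of Definition \ref{isotropicdef}, which characterizes conformal harmonicity in this setting. A direct block calculation with the formula above gives
\[
B_1 B_1^t = 2(\gamma^t\gamma)\bbar{cc} 1 & -1 \\ -1 & 1 \ebar,
\]
so $B_1 B_1^t = 0$ if and only if $\gamma^t\gamma = \sum_j \gamma_j^2 = \langle \zeta,\zeta\rangle = 0$, which is exactly the content of \eqref{eq-h-m}. The main technical obstacle is the bookkeeping in step (1): one must carefully track the normal connection $D$ as it sits inside the $SO(n+2)$-block of $\alpha_0''$, and verify that the three distinct ways in which Willmore and Riccati enter — via $\rho$, via $\bar s$, and via $\bar\mu_{\bar z}$ — align to produce the stated cancellations. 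Once organized, the argument is a routine generalization of H\'elein's $\SSS^3$ computation.
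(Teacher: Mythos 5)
Your proposal is correct and follows exactly the route the paper has in mind: the theorem itself is stated without proof (it is attributed to H\'elein, Xia--Shen and Ma), but the Proposition immediately following it records precisely the frame, the matrix $B_1$ and the identity $B_1B_1^t=2\bigl(\sum_j\gamma_j^2\bigr)\mathbf{E}$ on which your argument turns, and your key computation $D_{\bar z}\zeta=\frac{\bar\mu}{2}\zeta$ is the correct consequence of the Willmore equation combined with the conjugate of the Riccati equation \eqref{eq-h-1II}, which is indeed the heart of the harmonicity verification. The only blemish is a harmless normalization slip: with the paper's conventions the $\gamma$-entries of $B_1$ are $\sqrt{2}\,\gamma_j$ rather than $\gamma_j$, and with your own normalization the product would come out as $(\gamma^t\gamma)\mathbf{E}$ rather than $2(\gamma^t\gamma)\mathbf{E}$ --- neither affects the equivalence $B_1B_1^t=0\Leftrightarrow\sum_j\gamma_j^2=0$, which is condition \eqref{eq-h-m}.
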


\begin{proposition}
Let  $f_h=Y\wedge\hat{Y}$ be a half$-$isotropic harmonic map.
Choose $e_1$, $e_2$ with
 $Y_{z}+\frac{\mu}{2} Y=\frac{1}{2}(e_1-ie_2)$, and
a frame $\{\psi_j,j=1,\cdots,n\}$ of the normal bundle $V^{\perp}$,
so that $\kappa=\sum_{j=1}^{n}k_j\psi_j,\ \zeta=\sum_{j=1}^{n}\gamma_j\psi_j,\ \ D_z\psi_j=\sum_{l=1}^{n}b_{jl}\psi_l,\ \ b_{jl}+b_{lj}=0.
$
Set
\[
F=\left(\frac{1}{\sqrt{2}}(Y+\hat{Y}),\frac{1}{\sqrt{2}}(-Y+\hat{Y}),e_1,e_2,\psi_1,\cdots,\psi_n\right).
\]
Then the Maurer-Cartan form
 $\alpha=F^{-1} \dd F=\alpha^\prime+\alpha^{\prime \prime}$ of $F$ has the structure:
\begin{equation}\label{eq-hm-m-c-h}
\alpha^\prime =\left(      \begin{array}{cc}
                     A_1 & B_1 \\
                     -B_1^tI_{1,1} & A_2 \\
                   \end{array}
                 \right)\dd z,
\end{equation}
with
\[
A_1=\left(
                      \begin{array}{cc}
                        0 & \frac{\mu}{2} \\
                        \frac{\mu}{2} & 0 \\
                      \end{array}
                    \right),\
 B_1=\left(
      \begin{array}{ccccccc}
                       \frac{1+\rho}{2\sqrt{2}} &  \frac{-i-i\rho}{2\sqrt{2}}  & \sqrt{2}\gamma_1 & \cdots & \sqrt{2}\gamma_n\\
                        \frac{1-\rho}{2\sqrt{2}} &  \frac{-i+i\rho}{2\sqrt{2}} &  -\sqrt{2}\gamma_1 & \cdots & -\sqrt{2}\gamma_n\\
      \end{array}
    \right)=\left(
              \begin{array}{c}
                b_1^t \\
                b_2^t \\
              \end{array}
            \right),
\]
and
\begin{equation}\label{eq-B1-h}
B_1B_1^t=2\left(\sum_{j=1}^{n}\gamma_j^2\right)\cdot {\bf E},\ \hbox{ with } {\bf E}:=\left(
                           \begin{array}{cc}
                             1 & -1 \\
                             -1 & 1 \\
                           \end{array}
                         \right).
\end{equation}
Moreover, $f_h$ is an isotropic harmonic map, and hence $\hat Y$ an adjoint
transform of $Y$,
if and only if $f_h$ is a conformally harmonic map,
  if and only if
 \begin{equation}\label{eq-B1-Ma-h}
 B_1B_1^t=0.
\end{equation}
\end{proposition}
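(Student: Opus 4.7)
The plan is to verify the three claims by direct computation in the given frame. Throughout, write $v_1=(Y+\hat Y)/\sqrt{2}$, $v_2=(-Y+\hat Y)/\sqrt{2}$, $v_3=e_1$, $v_4=e_2$, $v_{4+j}=\psi_j$, so that $F=(v_1,\dots,v_{n+4})$ and the $(i,j)$ entry of $F^{-1}\dd F$ is the coefficient of $v_i$ in $\dd v_j$.

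For the block structure \eqref{eq-hm-m-c-h}, the relation $e_1-ie_2=2(Y_z+\tfrac{\mu}{2}Y)$ gives at once $Y_z=-\tfrac{\mu}{2}Y+\tfrac{1}{2}(e_1-ie_2)$, while the formula
\[
\hat Y_z=\tfrac{\mu}{2}\hat Y+\theta\bigl(Y_{\bar z}+\tfrac{\bar\mu}{2}Y\bigr)+\rho\bigl(Y_z+\tfrac{\mu}{2}Y\bigr)+2\zeta,
\]
computed in Section \ref{section-anotherharmonicmap}, simplifies under the half-isotropic hypothesis because the Riccati equation \eqref{eq-h-1II} says $\theta=0$. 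Adding and subtracting $Y_z$ and $\hat Y_z$ and collecting terms in the basis $\{v_i\}$ yields $(v_1)_z$ and $(v_2)_z$ explicitly, and reading off the coefficients produces precisely $A_1$ together with the stated rows $b_1^t,b_2^t$ of $B_1$. The lower-right block $A_2$ records the $\mathfrak{k}$-entries of $(e_1\mp ie_2)_z$ and $\psi_{j,z}$, which come directly from the structure equations in Section \ref{section2} and the definition of the normal connection $b_{jl}$; the sign pattern $-B_1^tI_{1,1}$ in the lower-left block is forced by $\langle v_1,v_1\rangle=-1$ and $\langle v_2,v_2\rangle=+1$.

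For \eqref{eq-B1-h}, I compute $B_1B_1^t=(b_i^tb_j)_{i,j=1,2}$ entry by entry. Each diagonal entry equals
\[
b_i^tb_i=\frac{(1\pm\rho)^2}{8}+\frac{(-i(1\pm\rho))^2}{8}+2\sum_j\gamma_j^2=2\sum_j\gamma_j^2,
\]
the $\rho$-contribution cancelling because $1+(-i)^2=0$; the same cancellation gives $b_1^tb_2=b_2^tb_1=-2\sum_j\gamma_j^2$, and together these assemble into $2(\sum_j\gamma_j^2)\mathbf E$.

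The equivalence \eqref{eq-B1-Ma-h} is then immediate. Since $\mathbf E\neq 0$, the condition $B_1B_1^t=0$ is equivalent to $\langle\zeta,\zeta\rangle=\sum_j\gamma_j^2=0$, which is exactly \eqref{eq-h-m}. Combined with the assumed Riccati equation \eqref{eq-h-1II}, this is precisely the defining condition for $\hat Y$ to be an adjoint transform of $Y$, and $f_h$ is then isotropic harmonic by the natural generalization of Definition \ref{isotropicdef} to $\SSS^{n+2}$. The equivalence with the conformality of $f_h$ as a map into the Grassmannian of oriented timelike $2$-planes of $\mathbb R^{n+4}_1$ is obtained by interpreting $\langle f_{h,z},f_{h,z}\rangle$ in the Pl\"ucker model $\Lambda^2\mathbb R^{n+4}_1$ and identifying it with a nonzero scalar multiple of $\sum_j\gamma_j^2$. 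The main obstacle is purely bookkeeping — tracking the Lorentzian signs when passing from the null frame $\{Y,\hat Y\}$ to the pseudo-orthonormal frame $\{v_1,v_2\}$ — but no conceptual novelty beyond the Riccati equation is needed.
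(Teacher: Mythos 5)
Your frame computation is correct and is exactly the (unwritten) proof the paper intends: with $\theta=\mu_z-\tfrac{\mu^2}{2}-s=0$ from the Riccati equation one reads off $A_1$ and the two rows of $B_1$ from $(Y\pm\hat Y)_z$, the lower-left block is forced by $\alpha^t I_{1,n+3}+I_{1,n+3}\alpha=0$ with $\langle v_1,v_1\rangle=-1$, $\langle v_2,v_2\rangle=1$, and the entrywise evaluation of $B_1B_1^t$ (with the cancellation $1+(-i)^2=0$ killing the $\rho$-terms) gives $2(\sum_j\gamma_j^2)\mathbf{E}$. The identification $B_1B_1^t=0\iff\sum_j\gamma_j^2=\langle\zeta,\zeta\rangle=0\iff$ equation \eqref{eq-h-m} $\iff$ $\hat Y$ is an adjoint transform is also right.

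There is, however, a genuine error in your justification of the equivalence with conformality of $f_h$. You propose to identify $\langle f_{h,z},f_{h,z}\rangle$ in the bivector (Pl\"ucker) model with a nonzero multiple of $\sum_j\gamma_j^2$. That quantity is in fact \emph{identically zero} for every half-isotropic map: writing $Y_z=-\tfrac{\mu}{2}Y+P$ and $\hat Y_z=\tfrac{\mu}{2}\hat Y+Q$ with $P=\tfrac12(e_1-ie_2)$, $Q=\tfrac{\rho}{2}(e_1-ie_2)+2\zeta$, one has $\langle Y_z,\hat Y\rangle=\tfrac{\mu}{2}$, $\langle Y,\hat Y_z\rangle=-\tfrac{\mu}{2}$, $\langle Y_z,\hat Y_z\rangle=\tfrac{\mu^2}{4}$, and the three terms of $\langle Y_z\wedge\hat Y+Y\wedge\hat Y_z,\,Y_z\wedge\hat Y+Y\wedge\hat Y_z\rangle$ are $-\tfrac{\mu^2}{4}+\tfrac{\mu^2}{2}-\tfrac{\mu^2}{4}=0$; the potentially interesting term $\langle Q,Q\rangle=4\sum_j\gamma_j^2$ only ever appears multiplied by $\langle Y,Y\rangle=0$. (Equivalently, on the $\mathfrak{p}$-block the invariant form is $\mathrm{tr}(I_{1,1}B_1B_1^t)$, and $\mathrm{tr}(I_{1,1}\mathbf{E})=0$.) So this pairing cannot detect $\sum_j\gamma_j^2$ and your argument for the last ``if and only if'' does not go through as written. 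The quantity that does equal $4\sum_j\gamma_j^2$ is $\langle\hat Y_z,\hat Y_z\rangle=\tfrac12(b_1-b_2)^t(b_1-b_2)$, i.e.\ the conformality of the second envelope $\hat Y$ (that of $Y$ being automatic, $(b_1+b_2)^t(b_1+b_2)=0$); this is precisely the sense in which the paper, following Ma, uses ``conformally harmonic'' --- see the remark immediately following the proposition that $B_1B_1^t=0$ is equivalent to $\langle Y_z,Y_z\rangle=\langle\hat Y_z,\hat Y_z\rangle=0$, and the definition of the adjoint transform via \eqref{eq-h-m}. Replacing your Pl\"ucker argument by this observation repairs the proof; everything else stands.
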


\begin{lemma} The maps $[Y]$ and $[\hat{Y}]$ associated to a
half$-$isotropic harmonic map are a pair of dual (S-)Willmore surfaces if and only if $rank(B_1)=1$.
\end{lemma}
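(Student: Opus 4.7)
The plan is to reduce the rank condition on $B_1$ to the vanishing of the coefficients $\gamma_j$ of $\zeta = D_{\bar z}\kappa + \tfrac{\bar\mu}{2}\kappa$, and then invoke the characterization of dual Willmore (S-Willmore) surfaces stated earlier in the excerpt, namely that $\hat Y$ is the geometric dual of $Y$ if and only if $\zeta = 0$. Since by definition an S-Willmore surface is a Willmore surface admitting a dual, and since the dual of an S-Willmore surface (when nondegenerate) is also S-Willmore by Ejiri, the equivalence ``dual (S-)Willmore pair $\iff \zeta = 0$" is immediate from the results quoted in Section \ref{section2}.

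Thus the real content is the linear-algebraic equivalence
\[
\mathrm{rank}(B_1) = 1 \quad \Longleftrightarrow \quad \gamma_1 = \cdots = \gamma_n = 0.
\]
For the ($\Leftarrow$) direction I would simply substitute $\gamma_j=0$ into the formula for $B_1$ and observe that the last $n$ columns vanish while the surviving first two columns of each row are
\[
b_1 \;=\; \tfrac{1+\rho}{2\sqrt{2}}\bigl(1,-i,0,\dots,0\bigr)^t, \qquad
b_2 \;=\; \tfrac{1-\rho}{2\sqrt{2}}\bigl(1,-i,0,\dots,0\bigr)^t,
\]
which are manifestly proportional; since $(1,-i)\ne 0$ and at least one of the scalars $\tfrac{1\pm\rho}{2\sqrt 2}$ is nonzero, $\mathrm{rank}(B_1)=1$.

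For the ($\Rightarrow$) direction, I would argue by cases on whether $b_1$ and $b_2$ are both nonzero. If both are nonzero then rank one forces $b_1 = \lambda b_2$ for some $\lambda \in \mathbb C^*$. Comparing the first two entries gives
\[
(1+\rho) = \lambda(1-\rho), \qquad -i(1+\rho) = -i\lambda(1-\rho),
\]
which fixes $\lambda$ uniquely (in particular $\lambda \ne -1$, as $\lambda=-1$ would force $1+\rho = -(1-\rho)$, i.e.\ $1=-1$). Comparing the $(j+2)$-th column for $j=1,\dots,n$ then gives $\sqrt{2}\gamma_j = -\lambda\sqrt{2}\gamma_j$, and since $\lambda\ne -1$ we conclude $\gamma_j=0$ for all $j$. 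If instead one of the rows, say $b_1$, is zero, then looking at the entries of $b_1$ the first two entries force $\rho=-1$, so $b_2$'s first two entries are $(\tfrac{1}{\sqrt{2}}, -\tfrac{i}{\sqrt{2}})\ne 0$; but $b_1 = 0$ also means $\gamma_j=0$ for all $j$. The symmetric case $b_2=0$ is identical. In all cases, $\gamma_j\equiv 0$.

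The only mild subtlety is carefully handling the degenerate sub-cases where $\rho = \pm 1$, but these are clean and do not need deeper ideas. After the linear-algebra step, the geometric conclusion follows by combining the identification $\zeta=0 \iff \hat Y$ is the dual of $Y$ (Bryant/Ejiri, as recalled in Section \ref{section2}) with the fact that the dual of a (nondegenerate) Willmore surface is Willmore, yielding a dual pair of S-Willmore surfaces. I do not foresee a real obstacle here; the proof is essentially a two-line matrix calculation packaged with a citation.
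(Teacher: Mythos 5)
Your argument is correct, and it is exactly the proof the paper intends: the lemma is stated without proof immediately after the Proposition giving the explicit form of $B_1$, whose last $n$ columns are $\pm\sqrt{2}\gamma_j$ with $\zeta=\sum_j\gamma_j\psi_j=D_{\bar z}\kappa+\tfrac{\bar\mu}{2}\kappa$, so the content is precisely your equivalence $\mathrm{rank}(B_1)=1\iff\gamma_1=\cdots=\gamma_n=0$ combined with the duality criterion $\zeta=0$ recalled in Section \ref{section2}. Your case analysis (in particular ruling out $\lambda=-1$ via $1+\rho=-(1-\rho)$, and the sub-cases $\rho=\pm1$ where a row vanishes) is complete and correct.
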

For any $\Psi_1 \in SO(1,1)$ there exists some $a\in\mathbb{R}^+$ such that
\begin{equation}\label{eq-conjugation invariant}
\Psi_1{\bf E}\Psi_1^t=a^2\cdot\left(
                           \begin{array}{cc}
                             1 & -1 \\
                             -1 & 1 \\
                           \end{array}
                         \right)=a^2{\bf E}.
\end{equation}
It follows that the condition \eqref{eq-B1-h} on $B_1$ is independent of the choice of frame $F$ for $f_h$.
The following theorem shows that Equation \eqref{eq-B1-h} is a good condition to characterize half$-$isotropic harmonic maps. We refer to \cite{Helein}, \cite{Xia-Shen}, \cite{Wang} for a proof.
 \begin{theorem}\label{thm-mc form2}
Let  $f:M\rightarrow SO^+(1,n+3)/(SO^+(1,1)\times SO(n))$ be a harmonic map satisfying \eqref{eq-B1-h}. Then either $f=Y\wedge \hat{Y}$ is a half$-$isotropic harmonic map associated with the Willmore surface $Y$, or
                $B_1$ is of the form
						\[ \left(
                   \begin{array}{cc}
                     b_1  &
                     -b_1 \\
                   \end{array}
                 \right)^t
								\]
                 for some $b_1$. In the latter case $[Y]$ is a constant point in $\SSS^{n+2}$.
\end{theorem}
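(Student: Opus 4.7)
The plan is to split cases based on whether the combination $w_+ := b_1 + b_2$ vanishes identically, using the fact that the hypothesis $B_1 B_1^t = 2\bigl(\sum_j \gamma_j^2\bigr)\mathbf E$ is equivalent to $w_+^t w_+ = 0$ and $w_+^t w_- = 0$ (with $w_- := b_1 - b_2$). From the block structure \eqref{eq-hm-m-c-h} of $\alpha' = F^{-1}\partial_z F \, dz$, reading off the first two columns gives the identities
\[
Y_z + \tfrac{\mu}{2} Y = \tfrac{1}{\sqrt 2}\sum_i (w_+)_i\,\hat e_i, \qquad \hat Y_z - \tfrac{\mu}{2} \hat Y = \tfrac{1}{\sqrt 2}\sum_i (w_-)_i\,\hat e_i,
\]
where $Y = \tfrac{1}{\sqrt 2}(e_{-1}-e_0)$, $\hat Y = \tfrac{1}{\sqrt 2}(e_{-1}+e_0)$ and $\hat e_i \in \{e_1, e_2, \psi_1, \ldots, \psi_n\}$. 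If $w_+ \equiv 0$, then $B_1 = (b_1, -b_1)^t$ and $Y$ satisfies $Y_z = -\tfrac{\mu}{2}Y$ together with its conjugate, so $[Y]$ is a constant point in $\SSS^{n+2}$: this is the second alternative.

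The remainder of the proof addresses the case $w_+ \not\equiv 0$. Here conformality of $[Y]$ is automatic since $\langle Y_z, Y_z\rangle = \tfrac{1}{2} w_+^t w_+ = 0$ and $\langle Y_z, Y_{\bar z}\rangle = \tfrac{1}{2}|w_+|^2 > 0$ on $\{w_+\neq 0\}$. The central step is a gauge normalization: the group $K = SO^+(1,1)\times SO(n+2)$ acts on $F$ on the right, transforming $B_1 \to \Psi_1 B_1 \Psi_2^{-1}$, and this action preserves the $\mathbf E$-structure of $B_1 B_1^t$ by \eqref{eq-conjugation invariant} together with $\Psi_2\Psi_2^t = I$. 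Writing $w_+ = u + iv$ with $u,v \in \real^{n+2}$, the isotropy $w_+^tw_+=0$ forces $u\perp v$ and $|u|=|v|$, so $\Psi_2 \in SO(n+2)$ may be chosen to rotate $(u,v)$ into $\mathrm{span}(e_1, e_2)$; a further $SO(2)\subset SO(n+2)$ rotation and an $SO^+(1,1)$ boost then bring $B_1$ into precisely the form displayed in the preceding Proposition. In this gauge $w_+$ has no $\psi_j$-component, so $Y_z \in \mathrm{span}(Y, e_1, e_2)$, identifying the mean curvature sphere of $[Y]$ with $\mathrm{span}(Y, \hat Y, e_1, e_2)$ and the conformal normal bundle with $\mathrm{span}(\psi_1, \ldots, \psi_n)$; formula \eqref{eq-hat-Y-def} then exhibits $\hat Y$ as the lift in the mean curvature sphere determined by the scalar $\mu$ in the $A_1$-block.

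It remains to verify that $[Y]$ is Willmore and $\hat Y$ is the corresponding adjoint-type transform. This follows from the harmonicity $d\alpha_\lambda + \tfrac{1}{2}[\alpha_\lambda\wedge\alpha_\lambda] = 0$: projecting its $\lambda^{-1}$-coefficient onto the $A_1$-block of $\mathfrak{k}$ and using the normalized shape of $B_1$, $A_1$, $A_2$ established above, one recovers the Riccati equation $\mu_z - \tfrac{\mu^2}{2} - s = 0$, which by Theorem \ref{thm-willmore}(iii) is exactly the Willmore equation. Hence $f = Y\wedge \hat Y$ is a half-isotropic harmonic map associated to the Willmore surface $[Y]$, as claimed. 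The main technical obstacle is the gauge-normalization step in the second paragraph: the $SO(n+2)$ rotation aligning $w_+$ with $\mathrm{span}(e_1, e_2)$ can only be chosen smoothly on the complement of the analytic set $\{w_+ = 0\}$, but since the Willmore and adjoint-transform conditions are local, analytic, and gauge-invariant, the conclusion extends by continuity across that set.
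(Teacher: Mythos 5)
Your opening moves are sound: the reduction of \eqref{eq-B1-h} to the pair of conditions $w_+^t w_+ = w_+^t w_- = 0$ for $w_\pm = b_1\pm b_2$, the identities $Y_z+\frac{\mu}{2}Y=\frac{1}{\sqrt2}\sum_i(w_+)_i\hat e_i$ and $\hat Y_z-\frac{\mu}{2}\hat Y=\frac{1}{\sqrt2}\sum_i(w_-)_i\hat e_i$, the dichotomy on the vanishing of $w_+$, and the gauge normalization of $w_+$ are all correct and are the natural starting point. (Note the paper gives no proof of this theorem; it defers to H\'elein, Xia--Shen and \cite{Wang}, so there is no in-paper argument to compare against.) The problem is the final paragraph, where the actual content of the theorem --- that $[Y]$ is Willmore and that $\hat Y$ is a lift of the form \eqref{eq-hat-Y-def} with $\mu$ solving the Riccati equation --- is supposed to be established. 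Three things go wrong there. First, the $\lambda^{-1}$-coefficient of $\dd\alpha_\lambda+\frac12[\alpha_\lambda\wedge\alpha_\lambda]$ is $\dd\alpha_1'+[\alpha_0\wedge\alpha_1']$, which is $\mathfrak p$-valued; it has no $A_1$-block, so ``projecting onto the $A_1$-block of $\mathfrak k$'' is not an available operation. Second, the Riccati equation \eqref{eq-h} is not the Willmore equation: Theorem \ref{thm-willmore}(iii) states the Willmore condition as $D_{\bar z}D_{\bar z}\kappa+\frac{\bar s}{2}\kappa=0$, and the two are equivalent only under the duality condition $D_{\bar z}\kappa+\frac{\bar\mu}{2}\kappa=0$ with $\kappa\neq0$ --- exactly the situation you are not in, since the whole point of the half-isotropic case is that $\zeta=D_{\bar z}\kappa+\frac{\bar\mu}{2}\kappa$ need not vanish. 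So even granting the Riccati equation, the Willmore property of $[Y]$ is never proved. Third, the Riccati equation does not in fact come from harmonicity: once $\hat Y$ is known to lie in the mean curvature sphere $V$, so that $\hat Y_z-\frac{\mu}{2}\hat Y$ decomposes as $\theta(Y_{\bar z}+\frac{\bar\mu}{2}Y)+\rho(Y_z+\frac{\mu}{2}Y)+2\zeta$ with $\theta=\mu_z-\frac{\mu^2}{2}-s$, the purely algebraic condition $w_+^tw_-=0$ kills the $(e_1+ie_2)$-component of $w_-$, i.e.\ forces $\theta=0$. What must be extracted from the harmonic map equation is the Willmore equation itself (from the $\psi_j$-columns of $\partial_{\bar z}B_1+A_1(\partial_{\bar z})B_1-B_1A_2(\partial_{\bar z})=0$, or by showing the conformal Gauss map $Y\wedge\hat Y\wedge e_1\wedge e_2$ is harmonic); this computation is absent.

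There is also a gap one step earlier: from ``$w_+$ has no $\psi_j$-component'' you conclude that the mean curvature sphere of $[Y]$ is $\mathrm{span}(Y,\hat Y,e_1,e_2)$. The gauge normalization only gives $\mathrm{span}(Y,Y_u,Y_v)=\mathrm{span}(Y,e_1,e_2)$; the fourth direction of $V$ is $Y_{z\bar z}$, and one must separately check that $Y_{z\bar z}$ has no $\psi_j$-component, equivalently that $\hat Y\perp V^\perp$. This is not a consequence of the algebra: it requires the harmonic map equation. (Concretely, summing the two rows of $\partial_{\bar z}B_1+A_1(\partial_{\bar z})B_1-B_1A_2(\partial_{\bar z})=0$ gives $\partial_{\bar z}w_++\frac{\bar\mu}{2}w_++A_2(\partial_{\bar z})w_+=0$; in the gauge $w_+=\frac{1}{\sqrt2}(1,-i,0,\dots,0)^t$ the $\psi_j$-components of this identity are exactly the vanishing of $\langle Y_{z\bar z},\psi_j\rangle$.) Without this, $\hat Y$ cannot be written in the form \eqref{eq-hat-Y-def}, so the Schwarzian $s$ and the quantity $\theta$ invoked above are not yet defined. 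In short: the skeleton is right, but the two substantive facts --- $\hat Y\in V$ and $[Y]$ Willmore --- are each asserted where a computation with the harmonic map equation is required, and the one equation you do claim to derive is attributed to the wrong source and misidentified with the Willmore condition.
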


\subsection{Harmonic maps into $SO^+(1,n+3)/(SO^+(1,1)\times SO(n+2))$}
Let $f: M\rightarrow SO^+(1,n+3)/(SO^+(1,1)\times SO(n+2))$
 be an harmonic map with a (local) lift frame $F:M\rightarrow SO^+(1,n+3)$ and the Maurer-Cartan form $\alpha=F^{-1}\dd F$. Let $z$ be a local complex coordinate of $M$. Then
\[
\alpha_0^\prime=\left(
                   \begin{array}{cc}
                     A_1 &0 \\
                     0 & A_2 \\
                   \end{array}
                 \right) \dd z,\ \ \alpha_1^\prime=\left(
                   \begin{array}{cc}
                     0 & B_1 \\
                     -B_1^tI_{1,1} & 0 \\
                   \end{array}\right) \dd z.
\]

To have a detailed discussion of half$-$isotropic and isotropic harmonic maps, we first take a look at their normalized potentials.
  \begin{theorem}(\cite{Helein}, \cite{Helein2}, \cite{Xia-Shen}) The normalized potential of a half$-$isotropic harmonic map $f=Y\wedge\hat{Y}$ is of the form
 \[
\eta=\lambda^{-1}\left(
                   \begin{array}{cc}
                    0 & \hat{B}_1 \\
                     -\hat{B}_1^tI_{1,1} & 0\\
                   \end{array}
                 \right)\dd z,
\]
with
\begin{equation}\label{eq-b1 of H-A}
\hat{B}_1\hat{B}_1^t=\hat{\gamma}{\bf E}.
\end{equation}  And if $f$ is an isotropic harmonic map, then
 \begin{equation}\label{eq-b1 of a}\hat{B}_1\hat{B}_1^t=0.
\end{equation}
Moreover, $[Y]$ and $[\hat{Y}]$ forms a pair of dual (S-)Willmore surfaces if and only if $rank(
\hat{B}_1)=1$.
\end{theorem}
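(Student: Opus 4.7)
The plan is to derive the structure of $\hat B_1$ directly from the Birkhoff decomposition $F = F_-\cdot F_+$ of the extended frame of $f_h$, and then track the half-isotropic condition through the resulting gauge transformation. The off-diagonal block structure of $\eta=\lambda^{-1}\eta_{-1}\,\dd z$ is immediate from Theorem~\ref{thm-DPW}: we have $\eta_{-1}\in\mathfrak{p}^{\mathbb{C}}$, and $\mathfrak{p}^{\mathbb{C}}$ consists precisely of the off-diagonal block matrices of the required form, so $\hat B_1$ is defined and only the condition $\hat B_1\hat B_1^t=\hat\gamma\mathbf{E}$ requires real work.

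Next, from $F=F_-F_+$ I would derive the standard gauge formula
\[
F_-^{-1}\dd F_- \;=\; F_+\,\alpha_\lambda\,F_+^{-1} - \dd F_+\cdot F_+^{-1},
\]
where $\alpha_\lambda=\lambda^{-1}\alpha_1'+\alpha_0+\lambda\alpha_1''$. Because $F_+$ is holomorphic in $\lambda$ at $0$ and $\dd F_+\cdot F_+^{-1}$ has no negative powers of $\lambda$, equating $\lambda^{-1}$-coefficients gives $\hat B_1\,\dd z=\Psi_1(B_1\,\dd z)\Psi_2^{-1}$, where $F_+|_{\lambda=0}=\mathrm{diag}(\Psi_1,\Psi_2)$; the block-diagonal form is forced by the $\sigma$-twist at $\lambda=0$, and then $\Psi_1\in SO^+(1,1,\mathbb{C})$, $\Psi_2\in SO(n+2,\mathbb{C})$.

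The key computation is then
\[
\hat B_1\hat B_1^t \;=\; \Psi_1 B_1\Psi_2^{-1}(\Psi_2^{-1})^t B_1^t\Psi_1^t \;=\; \Psi_1(B_1B_1^t)\Psi_1^t,
\]
using $\Psi_2\Psi_2^t=I$. Substituting the half-isotropic hypothesis $B_1B_1^t=\gamma\mathbf{E}$ and writing a general $\Psi_1\in SO^+(1,1,\mathbb{C})$ in the form $\mathrm{b}_1$ recorded in the excerpt, I observe that the null vector $e=(1,-1)^t$ is a common eigenvector, $\Psi_1 e=e^{-i\theta}e$, whence $\Psi_1\mathbf{E}\Psi_1^t=\Psi_1(ee^t)\Psi_1^t=e^{-2i\theta}\mathbf{E}$. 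This gives $\hat B_1\hat B_1^t=\hat\gamma\mathbf{E}$ with $\hat\gamma=\gamma e^{-2i\theta}$. The isotropic case is the specialization $\gamma\equiv 0\Rightarrow\hat\gamma\equiv 0$, and the rank characterization follows from the preceding lemma (rank$B_1=1$ iff $[Y],[\hat Y]$ are dual S-Willmore) combined with invertibility of $\Psi_1,\Psi_2$, which yields $\mathrm{rank}(\hat B_1)=\mathrm{rank}(B_1)$ pointwise.

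The main subtlety I would flag is that $\Psi_1,\Psi_2,B_1$ are individually $(z,\bar z)$-dependent, whereas $\hat B_1$ is holomorphic in $z$: this is not proved directly but is forced by the Birkhoff decomposition, and it has the nontrivial consequence that the scalar $\hat\gamma=\gamma e^{-2i\theta}$ comes out holomorphic even though $\gamma$ and $\theta$ individually are not. Modulo this point, the argument is purely algebraic, expressing the fact that the quadratic condition $BB^t\in\mathbb{C}\cdot\mathbf{E}$ is invariant under conjugation by the complexified isotropy group $K^{\mathbb{C}}=S(O^+(1,1,\mathbb{C})\times O(n+2,\mathbb{C}))$, in direct parallel with identity~\eqref{eq-conjugation invariant} already established for the real isotropy group.
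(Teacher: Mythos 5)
Your proof is correct, and it reaches the same algebraic core as the paper's --- the invariance of the condition $BB^t\in\C\cdot{\bf E}$ under conjugation by $K^{\C}$, i.e.\ $\Psi_1{\bf E}\Psi_1^t\in\C\cdot{\bf E}$ together with $\Psi_2\Psi_2^t=I$ --- but you get there by a genuinely different route. The paper invokes Wu's formula (Theorem \ref{thm-wu}): the normalized potential is $\lambda^{-1}\Psi\,\tilde\delta_1\,\Psi^{-1}\dd z$, where $\tilde B_1$ is the \emph{holomorphic part} of $B_1$ and $\Psi$ is the holomorphic solution of an ODE built from the holomorphic part of $\alpha_0'$; this requires the extra (easy but necessary) observation that passing to holomorphic parts preserves the structural form of $B_1$, so that $\tilde B_1\tilde B_1^t=\tilde\gamma{\bf E}$ already holds before conjugating. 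You instead extract the $\lambda^{-1}$-coefficient of the gauged Maurer--Cartan form in the Birkhoff factorization, obtaining $\hat B_1=\Psi_1 B_1\Psi_2^{-1}$ with the actual $(z,\bar z)$-dependent $B_1$ and with $\mathrm{diag}(\Psi_1,\Psi_2)$ the $\lambda^0$-part of $F_+$, and you correctly flag that holomorphy of the product is supplied by the Birkhoff decomposition rather than checked by hand. This is in fact exactly the computation the paper runs for the \emph{converse} statement (proof of Theorem \ref{thm-normalized potential}), so your argument unifies the two directions; it also supplies a justification for the rank assertion (invertibility of $\Psi_1,\Psi_2$ gives $\mathrm{rank}\,\hat B_1=\mathrm{rank}\,B_1$ away from the poles of $F_-$), which the paper's proof leaves implicit. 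One small imprecision: a general element of $SO^+(1,1,\C)$ is $\left(\begin{smallmatrix}\cosh t&\sinh t\\ \sinh t&\cosh t\end{smallmatrix}\right)$ with $t\in\C$, i.e.\ the form $\mathrm{b}_1$ only if you allow a complex angle; the eigenvector computation $\Psi_1(1,-1)^t=e^{-t}(1,-1)^t$, hence $\Psi_1{\bf E}\Psi_1^t=e^{-2t}{\bf E}$, goes through verbatim and is the complex analogue of \eqref{eq-conjugation invariant} that the paper also uses without comment.
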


\begin{proof} Let $\tilde{A}_1,$ $\tilde{A}_2$ and $\tilde{B}_1$ be the holomorphic part of $A_1,$ $A_2$ and $B_1$  respectively,
with respect to some base point $z_0$ such that $F(z_0,\lambda)=I$. So $\tilde{B}_1$ has the same form as $B_1$ and hence
$
\tilde{B}_1\tilde{B}_1^t=\tilde{\gamma}{\bf E}
$
                         for some $\hat{\gamma}$.
Let
$\Psi=\hbox{diag}\{
                            \Psi_1 ,  \Psi_2  \}$
be a solution to
\[
\Psi^{-1}d\Psi=\left(
                           \begin{array}{cc}
                            \tilde{A}_1 & 0 \\
                             0 & \tilde{A}_2 \\
                           \end{array}
                         \right)\dd z,\ ~~~\Psi(z_0)=I.
\]
By Wu's formula in Theorem \ref{thm-wu},
\[
\eta=\lambda^{-1}\Psi\left(
                           \begin{array}{cc}
                            0& \tilde{B}_1   \\
                             \tilde{B}_1^tI_{1,1} &0 \\
                           \end{array}
                         \right)\Psi^{-1}\dd z=\lambda^{-1} \left(
                           \begin{array}{cc}
                            0& \hat{B}_1   \\
                             \hat{B}_1^tI_{1,1} &0 \\
                           \end{array}
                         \right)\dd z,
\]												
with $\hat{B}_1=\Psi_1\tilde{B}_1\Psi_2^{-1}=\Psi_1\tilde{B}_1\Psi_2^{t}$. So we have
$
\hat{B}_1\hat{B}_1^t=\Psi_1\tilde{B}_1\Psi_2^{t}\Psi_2\tilde{B}_1\Psi_1^{t}=\hat{\gamma}\Psi_1{\bf E}\Psi_1^t.
$
     Then \eqref{eq-b1 of a} follows directly.  And \eqref{eq-b1 of H-A} follows from \eqref{eq-conjugation invariant}.
                         \end{proof}

Note that the isotropic condition $B_1 B_1^t=0$  is equivalent to
the pair of equations $\langle Y_z,Y_z\rangle=\langle\hat{Y}_z,\hat{Y}_z\rangle=0$. So if a non-constant harmonic map $f$ is isotropic, by Theorem  4.8 of \cite{Ma2006}, $Y$ and $\hat{Y}$  form a pair of adjoint Willmore surfaces. Then one has
(compare also \cite{Helein}, \cite{Helein2} and \cite{Xia-Shen}):
\begin{theorem}\cite{Ma2006}, \cite{Helein2} Let $f_h=Y\wedge \hat{Y}$ be an isotropic harmonic map. Then $Y$ and $\hat{Y}$  form a pair of adjoint Willmore surfaces. Moreover, set
\[
B_1=(b_1\ b_2)^t \hbox{ with } b_1,b_2\in\mathbb{C}^{n+2}.
\]
 Then $Y$ is immersed at the points $(b_1^t+b_2^t)(\bar{b}_1+\bar{b}_2)>0$ and $\hat{Y}$ is immersed  at the points  $(b_1^t-b_2^t)(\bar{b}_1-\bar{b}_2)>0$. Especially, when $[Y]$ and $[\hat{Y}]$ are in $\SSS^3$, they are a pair of dual Willmore surfaces.
\end{theorem}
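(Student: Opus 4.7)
The plan is to combine the classification of half-isotropic harmonic maps established in Theorem \ref{thm-mc form2} with a direct computation of $\partial_z Y$ and $\partial_z \hat Y$ from the Maurer-Cartan block $B_1$. I would first observe that $B_1 B_1^t = 0$ is the special case (zero coefficient) of the half-isotropic relation $B_1 B_1^t = 2(\sum_j \gamma_j^2)\mathbf{E}$ in \eqref{eq-B1-h}, so Theorem \ref{thm-mc form2} already provides the decomposition $f_h = Y \wedge \hat Y$ with $Y$ Willmore and $\hat Y$ of the form \eqref{eq-hat-Y-def}, together with $\mu$ satisfying the Riccati equation \eqref{eq-h-1II}. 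The extra vanishing $\sum_j \gamma_j^2 = 0$ forced by the isotropy is precisely the second adjoint condition \eqref{eq-h-m}, so by definition $\hat Y$ is an adjoint transform of $Y$, and Ma's theorem cited just above then gives that $\hat Y$ is also Willmore (possibly degenerate).

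For the immersion conditions, I would read the derivatives of $Y$ and $\hat Y$ off the first two columns of $\alpha' = F^{-1}\partial_z F\,dz$. Writing the frame as $F = (e_{-1}, e_0, e_1, e_2, \psi_1, \ldots, \psi_n)$ with $e_{-1} = (Y+\hat Y)/\sqrt 2$ and $e_0 = (\hat Y - Y)/\sqrt 2$, the block structure of $\alpha'$ gives
\[
\partial_z e_{-1} = \tfrac{\mu}{2}\, e_0 + \textstyle\sum_k (b_1)_k v_k, \qquad \partial_z e_0 = \tfrac{\mu}{2}\, e_{-1} - \textstyle\sum_k (b_2)_k v_k,
\]
where $v_k$ enumerates the orthonormal frame $(e_1, e_2, \psi_1, \ldots, \psi_n)$. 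Taking sum and difference yields
\[
\partial_z Y + \tfrac{\mu}{2} Y = \tfrac{1}{\sqrt 2}\textstyle\sum_k (b_1 + b_2)_k v_k, \qquad \partial_z \hat Y - \tfrac{\mu}{2}\hat Y = \tfrac{1}{\sqrt 2}\textstyle\sum_k (b_1 - b_2)_k v_k.
\]
Since $Y$ is null and orthogonal to the $v_k$, the conformal factor of $Y$ is $\langle \partial_z Y, \partial_{\bar z} Y\rangle = \tfrac12 (b_1 + b_2)^t(\bar b_1 + \bar b_2)$, and so $Y$ is immersed exactly where this quantity is positive; the criterion for $\hat Y$ comes from the analogous calculation with $(b_1 - b_2)$. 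As a consistency check, $B_1 B_1^t = 0$ unpacks as $b_i^t b_j = 0$ for all $i, j$, hence $\langle \partial_z Y, \partial_z Y\rangle = \langle \partial_z \hat Y, \partial_z \hat Y\rangle = 0$, which confirms conformality of both surfaces wherever they are immersed.

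For the concluding $\SSS^3$ statement I would use that when $n = 1$ the normal bundle $V^\perp$ is of complex rank one, so $D_{\bar z}\kappa$ is automatically parallel to $\kappa$; the equation $D_{\bar z}\kappa + \tfrac{\bar\mu}{2}\kappa = 0$ uniquely determines $\bar\mu$ away from umbilics and produces Bryant's dual surface, while \eqref{eq-h-m} becomes the trivial identity $\zeta = 0$. Hence the adjoint transform and the dual surface coincide in codimension one. The main obstacle I expect is the signs-and-indices bookkeeping needed to translate the abstract $2 \times (n+2)$ block $B_1$ into the derivative data of the pair $(Y, \hat Y)$; once that identification is clean, the immersion criteria are immediate and the adjoint-transform conclusion reduces to quoting Theorem \ref{thm-mc form2} and reading the isotropic condition as \eqref{eq-h-m}.
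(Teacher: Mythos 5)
Your argument is correct, and in substance it coincides with the paper's own (very brief) justification: the authors merely remark that $B_1B_1^t=0$ is equivalent to $\langle Y_z,Y_z\rangle=\langle\hat Y_z,\hat Y_z\rangle=0$ and then invoke Theorem 4.8 of \cite{Ma2006}, leaving the rest to the cited references. Your frame computation $\partial_z Y+\tfrac{\mu}{2}Y=\tfrac{1}{\sqrt2}\sum_k(b_1+b_2)_kv_k$ (and its analogue for $\hat Y$) is exactly the content of that equivalence, and it is what delivers the immersion criteria, which the paper does not prove at all; your identification of $B_1B_1^t=0$ with $\sum_j\gamma_j^2=0$ via \eqref{eq-B1-h} and with the adjoint condition \eqref{eq-h-m} is a legitimate, more self-contained packaging of the same idea, routed through Theorem \ref{thm-mc form2} and Ma's adjoint-transform theorem as stated in the paper rather than through an external citation. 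The codimension-one argument ($\gamma_1^2=0$ forces $\gamma_1=0$, hence $\zeta=0$ and $\hat Y$ is Bryant's dual) is also sound. The one point to tidy up: Theorem \ref{thm-mc form2} is a dichotomy, and you silently discard the branch $B_1=(b_1\ \, -b_1)^t$ in which $[Y]$ degenerates to a constant point (the mirror of case (1) of Theorem \ref{thm-minimal}). Your immersion criterion correctly reports that $Y$ is nowhere immersed in that branch, but the conclusion ``pair of adjoint Willmore surfaces'' must then be read with the ``possibly degenerate'' caveat that the paper attaches to adjoint transforms elsewhere; a sentence acknowledging this would close the gap.
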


\begin{theorem} \label{thm-normalized potential}(\cite{Helein}, \cite{Helein2}, \cite{Xia-Shen}) Let $f=Y\wedge \hat{Y}$ be an harmonic map with  normalized potential
\[
\eta=\lambda^{-1}\left(
                   \begin{array}{cc}
                    0 & \hat{B}_1 \\
                     -\hat{B}_1^tI_{1,1} & 0\\
                   \end{array}
                 \right)\dd z
\]
                 satisfying \eqref{eq-b1 of H-A}. Then either $f$ is a half$-$isotropic harmonic map (and $Y$ is a Willmore surface), or
\[
\hat{B}_1=\left(
              \begin{array}{cc}
                \hat{b}_1 &
                -\hat{b}_1  \\
              \end{array}
            \right)^t.
\]
\end{theorem}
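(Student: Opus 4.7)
The plan is to run DPW in the reverse direction from the preceding theorem. Starting from the normalized potential $\eta$, Theorem \ref{thm-DPW} produces an extended frame $F(z,\bar z,\lambda)\in\Lambda G_\sigma$ via the Iwasawa decomposition $F_-(z,\lambda)=F(z,\bar z,\lambda)V_+(z,\bar z,\lambda)$, with $V_+\in\Lambda^+_B G^{\mathbb C}_\sigma$. I will then identify the upper-right block $B_1$ of the Maurer-Cartan form of $F$ in terms of $\hat B_1$, verify that $B_1 B_1^t$ has the form required by Theorem \ref{thm-mc form2}, and finally translate the resulting dichotomy back to $\hat B_1$.

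First, computing
\[
\alpha=F^{-1}dF=V_+\eta V_+^{-1}-dV_+\cdot V_+^{-1}
\]
and extracting the coefficient of $\lambda^{-1}$, only the first term contributes (since $V_+$ is holomorphic at $\lambda=0$), yielding
\[
\begin{pmatrix} 0 & B_1 \\ -B_1^t I_{1,1} & 0 \end{pmatrix}dz = V_0 \begin{pmatrix} 0 & \hat B_1 \\ -\hat B_1^t I_{1,1} & 0 \end{pmatrix} V_0^{-1}dz,
\]
where $V_0=V_+|_{\lambda=0}=\mathrm{diag}(\mathfrak b_1,\mathfrak b_2)\in\mathfrak B$. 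The block structure immediately gives $B_1=\mathfrak b_1\,\hat B_1\,\mathfrak b_2^{-1}$. Since $\mathfrak b_2\in SO(n+2,\mathbb C)$ satisfies $\mathfrak b_2\mathfrak b_2^t=I$, we obtain
\[
B_1 B_1^t=\mathfrak b_1\,\hat B_1\hat B_1^t\,\mathfrak b_1^t=\hat\gamma\,\mathfrak b_1\,{\bf E}\,\mathfrak b_1^t.
\]
The explicit form of $\mathfrak b_1$ in Theorem \ref{thm-iwasawa} gives $\mathfrak b_1 v=e^{-i\theta}v$ for $v=(1,-1)^t$, and since ${\bf E}=vv^t$, one finds $\mathfrak b_1{\bf E}\mathfrak b_1^t=e^{-2i\theta}{\bf E}$. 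Hence $B_1 B_1^t=\gamma\,{\bf E}$.

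With $B_1 B_1^t$ in this form, Theorem \ref{thm-mc form2} applies, giving the dichotomy: either $f=Y\wedge\hat Y$ is a half-isotropic harmonic map associated to the Willmore surface $Y$, or $B_1=(b_1,-b_1)^t=v b_1^t$. In the first case we are done. In the second, inverting $B_1=\mathfrak b_1\hat B_1\mathfrak b_2^{-1}$ and using $\mathfrak b_1^{-1}v=e^{i\theta}v$ yields
\[
\hat B_1=\mathfrak b_1^{-1}\,v\,b_1^t\,\mathfrak b_2=e^{i\theta} v\,(\mathfrak b_2^t b_1)^t=v\,\hat b_1^t=(\hat b_1,-\hat b_1)^t,
\]
where $\hat b_1:=e^{i\theta}\mathfrak b_2^t b_1$, exactly the degenerate form in the statement.

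The main obstacle is isolating the gauge covariance of ${\bf E}$ under the Iwasawa decomposition: the $SO^+(1,1,\mathbb C)$ block $\mathfrak b_1$ preserves ${\bf E}$ only up to the scalar $e^{-2i\theta}$, while the $SO(n+2,\mathbb C)$ block $\mathfrak b_2$ drops out by complex orthogonality. Recognising $v=(1,-1)^t$ as the common null direction both for ${\bf E}=vv^t$ and for the degenerate normal form $B_1=vb_1^t$ is what makes the whole argument transparent and allows the dichotomy of Theorem \ref{thm-mc form2} to be pulled back from $B_1$ to $\hat B_1$, mirroring the Wu-formula computation in the proof of the preceding theorem.
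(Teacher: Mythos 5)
Your proposal is correct and follows essentially the same route as the paper: both gauge the $\lambda^{-1}$ coefficient of the Maurer--Cartan form by the $\lambda=0$ term of the positive factor in the loop group decomposition, observe that the $SO(n+2,\mathbb{C})$ block drops out by complex orthogonality while the $SO(1,1,\mathbb{C})$ block only rescales $\mathbf{E}$ (the paper's equation \eqref{eq-conjugation invariant}), and then invoke Theorem \ref{thm-mc form2}. Your only additions are welcome bits of explicitness the paper leaves implicit, namely the eigenvector computation $\mathfrak{b}_1 v=e^{-i\theta}v$ for $v=(1,-1)^t$ and the back-translation of the degenerate case $B_1=vb_1^t$ to $\hat{B}_1=(\hat{b}_1,-\hat{b}_1)^t$.
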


\begin{proof}
By the DPW construction, an extended frame $F$ of $f$ is derived from the decomposition
$F=F_-\cdot F_+,$ for some $F_-$  such that $F_-^{-1}\dd F_-=\eta,  F_-(0,\lambda)=I.$
Assume that
$F_+=\sum_{j=0}\lambda^{j}F_{+j}$ is the Taylor expansion of $F_+$ with respect to $\lambda\in\mathbb{C}$. So
$
F_{+0}= \textup{diag} \left(
                   F_{+01}
                     , F_{+02}
                 \right),  $$ \hbox{ with } F_{+01}\in SO(1,1,\mathbb{C}), F_{+02}\in SO(n+2,\mathbb{C}).$
Then let
\[F^{-1}\dd F=\lambda^{-1}\alpha_1+\alpha_0+\lambda\alpha_{-1}\ \hbox{ with }\
\alpha_1= \left(
                           \begin{array}{cc}
                            0&  B _1   \\
                             {B}_1^tI_{1,1} &0 \\
                           \end{array}
                         \right)\dd z.
\]
We have
\[
\left(
                           \begin{array}{cc}
                            0&  B_1   \\
                             \hat{B}_1^tI_{1,1} &0 \\
                           \end{array}
                         \right)=F_{+0}^{-1}\left(
                           \begin{array}{cc}
                            0& \hat{B}_1   \\
                             \hat{B}_1^tI_{1,1} &0 \\
                           \end{array}
                         \right)F_{+0}.
\]												
So
$B_1=F_{+01}^{-1}\hat{B}_1F_{+02}$. By \eqref{eq-conjugation invariant},
$B_1$ satisfies \eqref{eq-b1 of H-A}. The rest follows from Theorem \ref{thm-mc form2}.
\end{proof}

Concerning holomorphic potentials, by similar methods, we have
 \begin{theorem}\label{thm-holo-potent-half}
Let $f:\mathbb{D}\rightarrow SO^+(1,n+3)/(SO^+(1,1)\times SO(n+2))$ be a non-constant harmonic map, with an extended frame $F(z,\bar{z},\lambda)$ . Let
 \[
\Xi=C^{-1}\dd C=\sum_{j=-1}^{+\infty}\lambda^{j}\xi_{j} \dd z
\]
be a holomorphic potential of $f$ given by a holomorphic frame $C=F\cdot V_+$. Assume that
\[
\xi_{-1}= \left(
                   \begin{array}{cc}
                    0 & \hat{B}_1 \\
                     -\hat{B}_1^tI_{1,1} & 0\\
                   \end{array}
                 \right).
\]							
Then
\begin{enumerate}
\item
 $f=Y\wedge\hat{Y}$ is an isotropic harmonic map if and only if
 \begin{equation}\label{eq-b1 of a-xi}\hat{B}_1\hat{B}_1^t=0.
\end{equation}
Moreover, $[Y]$ and $[\hat{Y}]$ forms a pair of dual (S-)Willmore surfaces if and only if $rank(
\hat{B}_1)=1$.
\item If $f$  is a half$-$isotropic harmonic map, then $\hat{B}_1$ satisfies
$\hat{B}_1\hat{B}_1^t=\widehat{\gamma}{\bf E}.$ Conversely, if  $\hat{B}_1$ satisfies $\hat{B}_1\hat{B}_1^t=\widehat{\gamma}{\bf E}$, then either $f$ is a half$-$isotropic harmonic map, or
\[
\hat{B}_1=\left(
              \begin{array}{cc}
                \hat{b}_1&                -\hat{b}_1 \\
              \end{array}
            \right)^t.
\]
In the latter case, $f$ is not a half isotropic harmonic map. But if $\widehat{\gamma}\equiv 0$, then  $\hat{Y}$ is M\"obius equivalent to  a minimal surface in $\mathbb{R}^{n+2}$ and $\tilde {f}:=\hat Y\wedge {Y}$  is the isotropic harmonic map given by $\hat{Y}$  and its dual surface $Y$.
\end{enumerate}
\end{theorem}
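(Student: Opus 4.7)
The plan is to mirror the argument used for Theorem \ref{thm-normalized potential}, but to bypass Wu's formula by exploiting the direct gauge relation $F=CV_+^{-1}$. A short calculation gives
\[
F^{-1}\dd F \;=\; V_+\,\Xi\, V_+^{-1}\;+\;V_+\,\dd V_+^{-1}.
\]
Expanding $V_+ = \sum_{j\geq 0}\lambda^{j}V_{+j}$ and noting that $V_+\,\dd V_+^{-1}$ has no negative powers of $\lambda$, the $\lambda^{-1}$--coefficient of the left side is $V_{+0}\,\xi_{-1}\,V_{+0}^{-1}$. Since $V_{+0} = \textup{diag}(V_{+01},V_{+02})\in K^{\mathbb{C}}$ with $V_{+01}\in SO^+(1,1,\mathbb{C})$ and $V_{+02}\in SO(n+2,\mathbb{C})$, this block-diagonal conjugation preserves the off-diagonal shape of $\xi_{-1}$ and yields the key identity
\[
B_1 \;=\; V_{+01}\,\hat{B}_1\,V_{+02}^{\,t}.
\]

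Next I transfer the algebraic conditions on $\hat{B}_1$ across this identity. From the formula above, $B_1 B_1^{t} = V_{+01}\,\hat{B}_1 \hat{B}_1^{t}\, V_{+01}^{t}$. Observing that $\mathbf{E}=vv^{t}$ with $v=(1,-1)^{t}$ and that every element of $SO^+(1,1,\mathbb{C})$ is of the form $\bigl(\begin{smallmatrix}\cosh t & \sinh t\\ \sinh t & \cosh t\end{smallmatrix}\bigr)$, which scales $v$ by $e^{-t}$, the complex analogue of \eqref{eq-conjugation invariant} holds: $V_{+01}\mathbf{E}V_{+01}^{t} = a^{2}\mathbf{E}$ for some nonzero $a\in\mathbb{C}$. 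Consequently $\hat{B}_1\hat{B}_1^{t}=0$ iff $B_1B_1^{t}=0$, and $\hat{B}_1\hat{B}_1^{t}=\hat{\gamma}\mathbf{E}$ iff $B_1B_1^{t}=\gamma\mathbf{E}$ for a related constant $\gamma$. Because $V_{+01}$ and $V_{+02}$ are invertible, we also have $\textup{rank}\,\hat{B}_1=\textup{rank}\,B_1$. Part (1) follows by combining the $B_1B_1^{t}=0$ characterization of isotropic harmonic maps with the rank $1$ criterion for (S-)Willmore duality.

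For part (2) the forward direction is immediate. For the converse, once $B_1B_1^{t}=\gamma\mathbf{E}$ is known, Theorem \ref{thm-mc form2} forces either half-isotropy of $f$ or $B_1=(b_1,-b_1)^{t}$. In the latter case the conjugation argument transfers the shape back: since $V_{+01}^{-1}v = c\,v$ for some nonzero $c\in\mathbb{C}$,
\[
\hat{B}_1 \;=\; V_{+01}^{-1}B_1\,V_{+02} \;=\; c\begin{pmatrix}1\\-1\end{pmatrix}b_1^{t}V_{+02} \;=\; (\hat{b}_1,\,-\hat{b}_1)^{t},
\qquad \hat{b}_1 := c\,V_{+02}^{t}b_1,
\]
producing the degenerate form asserted.

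The main obstacle is the final geometric assertion. When $\hat{\gamma}\equiv 0$ and $\hat{B}_1=(\hat{b}_1,-\hat{b}_1)^{t}$, the identity $\hat{B}_1\hat{B}_1^{t}=0$ forces $\hat{b}_1^{t}\hat{b}_1=0$, and by Theorem \ref{thm-mc form2} the map $[Y]$ is a constant point of $\SSS^{n+2}$. The plan for identifying $\hat{Y}$ is to swap the roles of $Y$ and $\hat{Y}$: this corresponds to interchanging the first two entries of the frame $F$ (with an appropriate sign), which sends the two rows of $B_1$ into each other and turns the minus sign in $(b_1,-b_1)^{t}$ into a plus. The resulting $\widetilde{B}_1 = (\tilde b_1,\tilde b_1)^{t}$ satisfying $\tilde b_1^{t}\tilde b_1=0$ is precisely the higher-codimension analogue of the first case of Theorem \ref{thm-minimal}, so the swapped map $\tilde f = \hat{Y}\wedge Y$ is isotropic harmonic with $[\hat{Y}]$ M\"obius equivalent to a minimal surface in $\mathbb{R}^{n+2}$ and $[Y]$ its dual. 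Verifying that the row-swap is compatible with the canonical-lift, Hopf-differential and $K$-twisting conventions of Section \ref{section-anotherharmonicmap} is the only delicate point; the rest is linear algebra.
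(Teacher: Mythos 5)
Your proposal is correct and follows essentially the same route the paper intends: the paper offers no written proof of this theorem, stating only that it follows ``by similar methods'' from the normalized-potential version (Theorem \ref{thm-normalized potential}), whose proof likewise extracts the $\lambda^{-1}$-coefficient relation $B_1=V_{+01}\hat B_1 V_{+02}^{t}$ from the gauge between the extended frame and the holomorphic frame, transfers the quadratic conditions via the $SO^+(1,1,\mathbb{C})$-conjugation identity \eqref{eq-conjugation invariant}, and concludes with Theorem \ref{thm-mc form2}. Your explicit verification of the complexified identity $V_{+01}\mathbf{E}V_{+01}^{t}=a^{2}\mathbf{E}$ and the row-swap argument for the final $\widehat{\gamma}\equiv 0$ case are exactly the details the paper leaves implicit.
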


\section{Generalized Bj\"{o}rling's Problem for Willmore surfaces in $\SSS^{n+2}$}  \label{section8}
 We are now in a position to  solve a generalization of
the Bj\"{o}rling problem for all Willmore surfaces  in $\SSS^{n+2}$, with or without umbilic points.

\subsection{The $\SSS^{3}$ case.}
To address  Willmore surfaces with umblilic points in $\SSS^{3}$, one needs to consider half$-$isotropic harmonic maps instead of the isotropic ones, because, at umbilic points, $[Y]$ and $[\hat Y]$ may coincide and then $Y\wedge \hat Y$ is not well-defined.
In the half$-$isotropic case, if we only prescribe $Y$, $\hat Y$ and $\psi$, we will not have enough information on the tangent
plane of $\hat Y$ to generate a unique solution.
 A solution is to additionally prescribe the $v$ derivative $\hat Y_v$ along the curve.

\begin{theorem}\label{thm-Bjorling-G}
Let $\psi_0=\psi_0(u):\mathbb{I}\rightarrow \SSS^4_1$ denote a non-constant real analytic sphere congruence from $\mathbb{I}$ to $\SSS^3$, with a real analytic enveloping curve $[Y_0]$ and $u$ being the arc-parameter of $Y_0:\mathbb{I}\rightarrow \mathcal{C}_+^{4}\subset\mathbb{R}^5_1$. Let $\hat{Y}_0:\mathbb{I}\rightarrow \mathcal{C}_+^{4}$ be a real analytic map such that $\langle \psi_0, \hat{Y}_0\rangle=0$ and  $\langle Y_0, \hat{Y}_0\rangle=-1$. Let $\gamma_{12}:\mathbb{I}\rightarrow \real$ be a real analytic function.

 Then there exists a unique Willmore surface $y:\Sigma\rightarrow \SSS^3$, with conformal Gauss map $\psi$, $\Sigma$ some simply connected open subset containing $\mathbb{I}$ and $z=u+iv$ a complex coordinate of $\Sigma$, such that:
  \begin{enumerate}
\item  The canonical lift $Y$ of $y$ satisfies
$Y|_{\mathbb{I}}=Y_0$;
\item The conformal Gauss map $\psi$ satisfies $\psi|_{\mathbb{I}}=\psi_0$ and  $\langle\psi_{v}|_{\mathbb{I}}, \hat{Y}_{0}\rangle=-\gamma_{12}.$
\end{enumerate}\end{theorem}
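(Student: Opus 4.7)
The plan is to adapt the proof of Theorem \ref{thm-Bjorling-BP} to the half-isotropic setting of Section \ref{section7}. The essential new feature is this: because $\hat Y$ is now only required to be a lightlike section of the mean curvature sphere bundle rather than the dual, the off-diagonal block $B_1$ of the Maurer-Cartan form has, in the $n=1$ case, an additional complex coefficient $\gamma_1=\gamma_{11}+i\gamma_{12}$; its real part $\gamma_{11}$ is still extractable from the moving frame equations along $\mathbb{I}$, but the imaginary part $\gamma_{12}$ is a genuinely transverse quantity, and this is precisely what is supplied by the extra datum $\langle\psi_v|_{\mathbb{I}},\hat Y_0\rangle=-\gamma_{12}$.

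First I would build the frame $F_0(u)=\bigl((Y_0+\hat Y_0)/\sqrt 2,\,(-Y_0+\hat Y_0)/\sqrt 2,\,P_1,\,P_2,\,\psi_0\bigr)$ along $\mathbb{I}$, with $\mu_1:=\langle\hat Y_0,Y_{0u}\rangle$ and $P_1:=Y_{0u}+\mu_1 Y_0$ (which forces $P_1\perp\hat Y_0$), and with $P_2$ the unique positively oriented unit vector orthogonal to $\{Y_0,\hat Y_0,P_1,\psi_0\}$. Decomposing $F_0^{-1}F_0'$ according to the half-isotropic block structure of the Proposition in Section \ref{section7}, I would read off the five real functions $\mu_2,k_1,k_2,\rho_1,\rho_2$ exactly as in Theorem \ref{thm-Bjorling-BP}, together with one new real function $\gamma_{11}=\tfrac{1}{4}\langle\hat Y_{0u},\psi_0\rangle$, which vanishes precisely when $\hat Y_0$ is dual to $Y_0$. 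Combining with the prescribed $\gamma_{12}$, I then set $\rho=\rho_1+i\rho_2$ and $\gamma_1=\gamma_{11}+i\gamma_{12}$.

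Next, exactly as in the proof of Theorem \ref{thm-Bjorling-BP}, I would form the boundary potential
\[
\Xi=\bigl(\lambda^{-1}\mathcal A_1(z)+\mathcal A_0(z)+\lambda\mathcal A_{-1}(z)\bigr)\,\dd z,
\]
where $\mathcal A_0$ is the holomorphic extension of the block-diagonal real matrix formed from $\mu_1,\mu_2,k_1,k_2$, $\mathcal A_1$ is the holomorphic extension of the half-isotropic block with parameters $\rho(z)$ and $\gamma_1(z)$, and $\mathcal A_{-1}(z):=\overline{\mathcal A_1(\bar z)}$. By construction $\mathcal A_1$ satisfies the half-isotropy $\hat B_1\hat B_1^t=2\gamma_1^{\,2}\mathbf E$, so Theorem \ref{thm-holo-potent-half} applies. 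Via DPW (Theorem \ref{thm-DPW2}) one then solves $\mathfrak F^{-1}\dd\mathfrak F=\Xi$ with $\mathfrak F(u_0,\lambda)=F_0(u_0,\lambda)$, performs the pointwise Iwasawa decomposition $\mathfrak F=\check F\check F_+$ on a simply connected neighbourhood $\Sigma$ of $\mathbb{I}$, and extracts from $\check F$ a Willmore surface $y=[Y]$ together with its lift $Y$, the half-isotropic partner $\hat Y$, and the conformal Gauss map $\psi$. The matching initial condition, together with the fact that on $\mathbb{I}$ the potential $\Xi$ reduces to $\hat\alpha_\lambda\,\dd u$, forces $\check F|_{\mathbb{I}}=F_0$, yielding condition (i) and $\psi|_{\mathbb{I}}=\psi_0$ at once.

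Finally, the $\psi_v$ condition is verified from the structure equation $\psi_z=D_z\psi+2\langle\psi,D_{\bar z}\kappa\rangle Y-2\langle\psi,\kappa\rangle Y_{\bar z}$: combined with $\langle Y_{\bar z},\hat Y\rangle=\bar\mu/2$ and $\zeta=\gamma_1\psi$ one obtains $\langle\psi_z,\hat Y\rangle=-2\gamma_1$, and then $\psi_v=i(\psi_z-\psi_{\bar z})$ shows $\langle\psi_v,\hat Y\rangle|_{\mathbb{I}}$ is a real multiple of $\gamma_{12}$, the multiplier being fixed to $-1$ by the very definition of $\gamma_1$ in $\mathcal A_1$. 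Uniqueness is then immediate: any other Willmore surface satisfying the stated conditions determines, via its extended frame on $\mathbb{I}$, the same six real entries in $\mathcal A_0|_{\mathbb{I}}$ and $2\,\mathrm{Re}\,\mathcal A_1|_{\mathbb{I}}$ together with the same $\gamma_{12}$, hence the same potential and the same surface. The main obstacle is the careful bookkeeping of conjugation and scaling conventions needed to verify both that the holomorphic extension $\gamma_1(z)=\gamma_{11}(z)+i\gamma_{12}(z)$ preserves the half-isotropy $\hat B_1\hat B_1^t=2\gamma_1^{\,2}\mathbf E$ on all of $\Sigma$ (not merely on $\mathbb{I}$) and that the normalisation in the last identity is exactly $-\gamma_{12}$; once these are settled the result is packaged by Theorem \ref{thm-holo-potent-half}, with the half-isotropic setting being exactly what permits $[\hat Y]$ and $[Y]$ to coincide at umbilics.
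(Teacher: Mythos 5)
Your proposal is correct and follows essentially the same route as the paper: the paper derives this theorem from its boundary-potential version (Theorem \ref{thm-Bjorling-BP-G}), whose proof is taken verbatim from that of Theorem \ref{thm-Bjorling-BP} except that $\gamma_{1}$ is now allowed to be nonzero, with $\gamma_{11}$ read off from the frame equations \eqref{eq-moving-y-G}, $\gamma_{12}$ prescribed, and the $\psi_v$ condition checked via $\gamma_1=\tfrac12\langle \hat Y_z,\psi\rangle$ --- exactly your steps. The only point to settle carefully is the one you already flagged, namely the numerical constant relating $\langle\psi_v|_{\mathbb I},\hat Y_0\rangle$ to $\gamma_{12}$ (the factors of $4$ in \eqref{eq-moving-y-G} versus the normalization of $\gamma_1$ in $B_1$), which is a bookkeeping convention rather than a mathematical gap.
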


Theorem \ref{thm-Bjorling-G} is a straightforward corollary of the following
\begin{theorem}\label{thm-Bjorling-BP-G}
We retain the assumptions and  notations   in Theorem \ref{thm-Bjorling-G}. Choose two real analytic unit vector fields $P_1$ and $P_2$ on $\mathbb{I}$ such that
\[
Y_{0u}=P_1\mod Y_0,\ ~P_2\perp\{\psi_0,Y_0,\hat{Y}_0,P_1\}\ \hbox{ and }~\det(Y_0,\hat{Y}_0,P_1,P_2,\psi_0)=1.
\]
There exist real analytical functions
  $\mu_1=\mu_1(u)$, $\rho_1=\rho_1(u)$,  $\rho_2=\rho_2(u)$, $k_1=k_1(u)$,
	$k_2=k_2(u)$ and 	$\gamma_{11}=\gamma_{11}(u)$  on $\mathbb{I}$ such  that
\begin{equation}\label{eq-moving-y-G}
\left\{\begin {array}{lllll}
Y_{0u}=-\mu_1Y_0+P_1,\\
\hat{Y}_{0u}=\mu_1\hat{Y}_0+\rho_1P_1+\rho_2P_2+4\gamma_{11}\psi_0,\\
P_{1u}=\mu_2P_2+2k_1\psi_0+\hat{Y}_0+\rho_1 Y_0,\\
P_{2u}=-\mu_2P_1-2k_2\psi_0+\rho_2 Y_0,\\
\psi_{0u}=-2k_1P_1+2k_2P_2  + 4\gamma_{11} \hat Y_0,\\
\end {array}\right.
\end{equation}
holds. Set $\mu=\mu_1+i\mu_2,$ $k=k_1+ik_2$, $\rho=\rho_1+i\rho_2$  and $\gamma_1=\gamma_{11}+i\gamma_{12}$.
For a real analytic function $x(u)$ on $\mathbb I$, denote its analytic extension
to a simply connected open subset containing ${\mathbb I}$ by $x(z)$.
Consider the holomorphic potential
 \[
 \Xi=\left(\lambda^{-1}\mathcal{A}_1 + \mathcal{A}_0 +\lambda\mathcal{A}_{-1} \right) \dd z,
\]
with
\beqas
\mathcal{A}_0=\bbar {cc} A_1 & 0 \\ 0 & A_2 \ebar, \quad
\mathcal{A}_1 = \bbar{cc} 0 & B_1 \\ -B_1^t I_{1,1} & 0 \ebar, \quad
\mathcal{A}_{-1}(z) = \overline{\mathcal{A}_1(\bar z)},\\
 A_1(z) = \bbar{cc} 0 & \mu_1(z) \\ \mu_1(z) & 0 \ebar, \quad
A_2(z) =\bbar{ccc}   0 &  -\mu_2(z)  & -2k_1(z)  \\
       \mu_{2}(z) &0 & 2k_2(z)    \\
       2k_1(z) & -2k_2(z) & 0  \ebar,  \\
B_1(z) = \frac{1}{2\sqrt{2}} \bbar{ccc}
        1+\rho(z) &  -i-i\rho(z)  & 4 \gamma_1\\
        1-\rho(z) &  -i+i\rho(z) &  -4\gamma_1 \ebar. \\
\eeqas
By DPW, Theorem \ref{thm-holo-potent-half}, the potential $\Xi$ provides a half$-$isotropic harmonic map, together with a unique Willmore surface  $y:\Sigma\rightarrow \SSS^3$, with conformal Gauss map $\psi$, $\Sigma$ some simply connected open subset containing $\mathbb{I}$ and $z=u+iv$ a complex coordinate of $\Sigma$,  such that the canonical lift $Y$ of $y$ satisfy $
Y|_{\mathbb{I}}=Y_0$.
Then
$\psi|_{\mathbb{I}}=\psi_0$ and $\langle\psi_{v}|_{\mathbb{I}}, \hat{Y}_{0}\rangle=-\gamma_{12}$. \end{theorem}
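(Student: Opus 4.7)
The strategy is to adapt the proof of Theorem \ref{thm-Bjorling-BP}, making the two modifications forced by the fact that $\hat Y_0$ is no longer the dual of $Y_0$: the boundary Maurer--Cartan form acquires a new $\gamma_{11}$-coupling between $\hat Y_0$ and $\psi_0$, and the DPW output is only half-isotropic rather than isotropic, so Theorem \ref{thm-holo-potent-half}(2) replaces Theorem \ref{thm-np of HA}. The prescribed $\gamma_{12}$ then plays the role of the transverse datum that pins down the Willmore surface.

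First I would assemble along $\mathbb I$ the $SO^+(1,4)$-frame
\[
F_0(u) = \left( \tfrac{Y_0 + \hat Y_0}{\sqrt 2},\ \tfrac{-Y_0 + \hat Y_0}{\sqrt 2},\ P_1,\ P_2,\ \psi_0 \right);
\]
the orthogonality in the hypothesis and the arc-length normalization of $Y_0$ force $F_0^{-1} F_0'$ to have precisely the block structure prescribed by \eqref{eq-moving-y-G}, which reads off the real analytic functions $\mu_1, \mu_2, k_1, k_2, \rho_1, \rho_2, \gamma_{11}$. Combining $\gamma_{11}$ with the prescribed $\gamma_{12}$ into $\gamma_1 = \gamma_{11} + i \gamma_{12}$ and extending everything analytically into a simply connected neighbourhood $\mathbb D_0 \supset \mathbb I$ produces the holomorphic potential $\Xi$ of the statement. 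Next, introducing the spectral parameter $\lambda$ along the curve by $\hat\alpha_\lambda = (\lambda^{-1} \mathcal A_1 + \mathcal A_0 + \lambda \overline{\mathcal A_1})\,du$, I let $F_0(u,\lambda)$ be the solution of $dF_0 = F_0 \hat\alpha_\lambda$ with $F_0(u_0,\lambda) = F_0(u_0) \in SO^+(1,4)$. Since $\Xi|_{z=u} = \hat\alpha_\lambda$, the unique solution $\mathfrak F$ of $\mathfrak F^{-1} d\mathfrak F = \Xi$ with the same initial condition coincides with $F_0(u,\lambda) \in \Lambda G_\sigma$ on $\mathbb I$, and hence lies in the Iwasawa big cell $\Lambda G_\sigma \cdot \Lambda^+_B G^{\C}_\sigma$ of Theorem \ref{thm-iwasawa} throughout an open neighbourhood of $\mathbb I$ that may be taken to be $\mathbb D_0$. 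Iwasawa factoring $\mathfrak F = \check F \cdot \check F_+$ and invoking Theorem \ref{thm-DPW} yields an extended frame $\check F : \mathbb D_0 \to \Lambda G_\sigma$ of some harmonic map $f : \mathbb D_0 \to SO^+(1,4)/(SO^+(1,1) \times SO(3))$, with $\check F|_{\mathbb I} = F_0$.

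The central algebraic verification is the $2 \times 2$ identity $B_1 B_1^t = 2 \gamma_1^2 \, \mathbf E$: the two $\rho$-columns contribute $0$ exactly as in the isotropic case of Theorem \ref{thm-Bjorling-BP}, while the $\gamma_1$-column contributes $2 \gamma_1^2 \, \mathbf E$ by virtue of its antisymmetric $\pm \sqrt 2 \gamma_1$ pattern. By Theorem \ref{thm-holo-potent-half}(2), $f$ is then a half-isotropic harmonic map $Y \wedge \hat Y$ with $Y$ a Willmore surface and $\hat Y$ an adjoint transform, unless $B_1$ takes the degenerate form $(b, -b)^t$; the latter is excluded since the sum of the two rows of $B_1$ equals $(1, -i, 0)/\sqrt 2 \ne 0$, a non-degeneracy that survives the $\Lambda^+_B G^{\C}_\sigma$-twist relating our $B_1$ to the reduced $\hat B_1$ of that theorem. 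Setting $Y = (\check F_1 - \check F_2)/\sqrt 2$, $\hat Y = (\check F_1 + \check F_2)/\sqrt 2$ and $\psi = \check F_5$ from $\check F|_{\lambda=1} = (\check F_1, \ldots, \check F_5)$, the identity $\check F|_{\mathbb I} = F_0$ delivers at once $Y|_{\mathbb I} = Y_0$, $\hat Y|_{\mathbb I} = \hat Y_0$ and $\psi|_{\mathbb I} = \psi_0$, with $z = u + iv$ the conformal coordinate.

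It remains to verify the $v$-derivative identity and uniqueness. For the former I compute the $\mathfrak p$-part of $\check F^{-1} \check F_v|_{u, \lambda=1}$, which equals $-2 \, \mathrm{Im}(\mathcal A_1)|_u$; the $(1,5)$ and $(2,5)$ entries are opposite real multiples of $\gamma_{12}$ produced by the $\pm 4 \gamma_1$ entries of $B_1$, and pairing with $\hat Y_0$ using $\langle \check F_1, \hat Y_0 \rangle = -1/\sqrt 2$, $\langle \check F_2, \hat Y_0 \rangle = 1/\sqrt 2$, and the orthogonality of $\check F_3, \check F_4, \check F_5$ to $\hat Y_0$, yields $\langle \psi_v|_{\mathbb I}, \hat Y_0 \rangle = -\gamma_{12}$ --- the constant $4$ in the definition of $B_1$ was chosen precisely for this normalization. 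For uniqueness, any Willmore surface $y$ satisfying the three conclusions has canonical lift $Y$ and adjoint transform $\hat Y$ defining the same frame $F_0$ along $\mathbb I$, so the seven real analytic functions are forced by \eqref{eq-moving-y-G}; the prescribed $\hat Y_v$ condition then recovers $\gamma_{12}$, hence the boundary potential coincides with $\Xi$ and the DPW construction returns $y$. I expect the main obstacle to be the bookkeeping of this last computation --- tracking the precise constant $-1$ in the $v$-derivative identity through the Iwasawa absorption into $\check F_+$, and confirming that the degeneracy exclusion from Theorem \ref{thm-holo-potent-half}(2) holds on all of $\mathbb D_0$ and not only at the base point.
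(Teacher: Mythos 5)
Your proposal is essentially the paper's own proof, which literally reduces the argument to that of Theorem \ref{thm-Bjorling-BP} (frame $F_0$ along $\mathbb I$, boundary potential, Iwasawa decomposition near $\mathbb I$), with the sole modification that $\gamma_{11}$ is read off from \eqref{eq-moving-y-G} while $\gamma_{12}$ is freely prescribed, the transverse identity being deduced from $\gamma_1=\tfrac{1}{2}\langle \hat Y_z,\psi\rangle$ --- exactly the content of your $B_1B_1^t=2\gamma_1^2\,\mathbf{E}$ verification, your exclusion of the degenerate $(b,-b)^t$ case via the nonvanishing row sum, and your $v$-derivative pairing. One caveat: if you actually carry out that final pairing (equivalently, combine $\gamma_1=\tfrac{1}{2}\langle \hat Y_z,\psi\rangle$ with $\langle\psi,\hat Y\rangle\equiv 0$), you obtain $\langle\psi_{v}|_{\mathbb{I}},\hat Y_0\rangle=4\gamma_{12}$ rather than $-\gamma_{12}$, and consistency of \eqref{eq-moving-y-G} with $\langle\hat Y_0,\psi_0\rangle=0$ forces the last line of that system to end in $4\gamma_{11}Y_0$ rather than $4\gamma_{11}\hat Y_0$ (as it does in the higher-codimension system \eqref{eq-eq-moving-y-GH}) --- these are normalization slips in the statement itself rather than defects of your method, but your claim that the constant $4$ ``was chosen precisely'' to produce $-\gamma_{12}$ does not survive the computation.
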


\begin{proof} The proof can be taken verbatim from the proof of Theorem \ref{thm-Bjorling-BP}, with the only difference being that
 here the function $\gamma_1$ in the matrix $B_1(z)$ is allowed to be non-zero.
The real part of $\gamma_1(u)$ can be read off from \eqref{eq-moving-y-G}. But the imaginary part of  $\gamma_1(u)$ stays unknown, and we prescribe this as $\gamma_{12}(u)$. The rest is the same as the proof of Theorem \ref{thm-Bjorling-BP}.  The equality
$\langle\psi_{v}|_{\mathbb{I}}, \hat{Y}_{0}\rangle=-\gamma_{12}(u)$ follows from the fact that for a Willmore surface $Y$
 with a half$-$isotropic harmonic map $Y\wedge \hat Y$, $\gamma_1=\frac{1}{2}\langle \hat Y_z,\psi\rangle$.
\end{proof}

The potential $\Xi$ defined in the above theorem is also called the \emph{boundary potential} of the harmonic map.

\begin{remark}  \begin{enumerate}
\item In contrast to the fully isotropic framework, here one can, for any Willmore surface $y$,  locally choose a solution $\mu$ to the equation $\mu_z-\frac{\mu^2}{2}-s=0$ with $\mu$ finite. Then one obtains a half$-$isotropic harmonic map $Y\wedge \hat Y$.
Thus, the above theorem holds locally for any Willmore surface in $S^3$.

\item Choose $\hat Y_0$ to be an enveloping curve of $\psi_0$, pointwisely different from $Y_0$, and set $\gamma_{12}\equiv0$. Then we re-obtain Theorem \ref{thm-Bjorling-BP}.
\item An extremal case is that $Y_0(\mathbb{I})$ is an umbilic curve of $Y$.
 For example, the Willmore tori constructed by Babich and Bobenko \cite{bab-bob}  contain an umbilic curve at the
intersection of the upper and lower hemisphere models of $\HHH^3$.
We can construct any Willmore surface with a line of umbilics with the following characterization
(see Figure \ref{umbilicexamples}):
\end{enumerate}
\end{remark}
\begin{corollary}
We retain the assumptions and notations of Theorem \ref{thm-Bjorling-G} and \ref{thm-Bjorling-BP-G}. Then $Y_0(\mathbb I)$ is  an umbilic curve of $Y$  if and only if $k_1=k_2\equiv0$ on $\mathbb I$.
\end{corollary}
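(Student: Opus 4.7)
The plan is to identify the real analytic functions $k_1, k_2$ appearing in the boundary potential with the real and imaginary parts, along $\mathbb I$, of the complex scalar $k$ occurring in the decomposition $\kappa = k\psi$ of the conformal Hopf differential. Umbilicity at a point is by definition the vanishing of $\kappa$, and in codimension one (target $\SSS^3$) this is equivalent to $k=0$, since $\psi$ is a unit section of the real rank one normal bundle $V^\perp$. So once the identification $k_1 + i k_2 = k|_{\mathbb I}$ is established, the corollary is immediate.

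I would compute $\psi_u$ along $\mathbb I$ in two independent ways. The first is to read off the $P_1$ and $P_2$ coefficients from the fifth line of the moving frame system \eqref{eq-moving-y-G}, which gives $-2k_1$ and $2k_2$ respectively. The second uses the structure equation from Section \ref{section2},
\[
\psi_z = D_z\psi + 2\langle\psi, D_{\bar z}\kappa\rangle Y - 2\langle\psi, \kappa\rangle Y_{\bar z}.
\]
Because $V^\perp$ is a real rank one bundle with unit section $\psi$, the normal connection satisfies $D_z\psi = 0$; substituting $\kappa = k\psi$ then reduces this to $\psi_z = 2 k_{\bar z} Y - 2 k Y_{\bar z}$.

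Next, adding the complex conjugate (using that $\psi$ and $Y$ are real) gives
\[
\psi_u = \psi_z + \psi_{\bar z} = 2(k_{\bar z}+\bar k_z)Y - 2\bigl(kY_{\bar z} + \bar k Y_z\bigr).
\]
Inserting $2Y_z = P_1 - i P_2 - \mu Y$ (from the definition of $P_1, P_2$ in Theorem \ref{thm-Bjorling-BP-G}) and its conjugate $2 Y_{\bar z} = P_1 + i P_2 - \bar\mu Y$, a short algebraic manipulation, using $k + \bar k = 2\,\mathrm{Re}(k)$ and $k - \bar k = 2i\,\mathrm{Im}(k)$, collapses the bracket to $\mathrm{Re}(k)P_1 - \mathrm{Im}(k) P_2 - \mathrm{Re}(k\bar\mu)Y$; hence
\[
\psi_u = \bigl(\text{some real coefficient}\bigr)\cdot Y - 2\,\mathrm{Re}(k)\,P_1 + 2\,\mathrm{Im}(k)\,P_2.
\]

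Finally, matching with the first expression, and using that $P_1$, $P_2$ are linearly independent of each other and of the $Y$ (resp.\ $\hat Y$) direction in the frame, forces $k_1 = \mathrm{Re}(k)$ and $k_2 = \mathrm{Im}(k)$ along $\mathbb I$. Consequently $k|_{\mathbb I} \equiv 0$ if and only if $k_1 \equiv k_2 \equiv 0$ on $\mathbb I$, which is exactly the assertion that $Y_0(\mathbb I)$ is an umbilic curve of $Y$. The computation is entirely direct; there is no conceptual obstacle, only the routine bookkeeping of the factors of $2$ and signs that link the two conventions.
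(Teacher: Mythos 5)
Your proof is correct and is exactly the identification the paper relies on: the corollary is stated without an explicit proof because, by the construction of the boundary potential, the functions $k_1,k_2$ are precisely the restriction to $\mathbb{I}$ of the real and imaginary parts of the coefficient $k$ in $\kappa = k\psi$, so umbilicity ($\kappa\equiv 0$) along $Y_0(\mathbb{I})$ is equivalent to $k_1=k_2\equiv 0$. Your direct verification via the structure equation for $\psi_z$ and the frame relation $2Y_z = P_1 - iP_2 - \mu Y$ is a complete and accurate justification of that identification (note only that the $4\gamma_{11}$ term in the fifth line of \eqref{eq-moving-y-G} should multiply $Y_0$ rather than $\hat Y_0$, as in \eqref{eq-eq-moving-y-GH}, which is consistent with the $Y$-coefficient your computation produces and does not affect the $P_1,P_2$ matching).
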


\begin{figure}[ht]
\centering
$
\begin{array}{ccc}
\includegraphics[height=28mm]{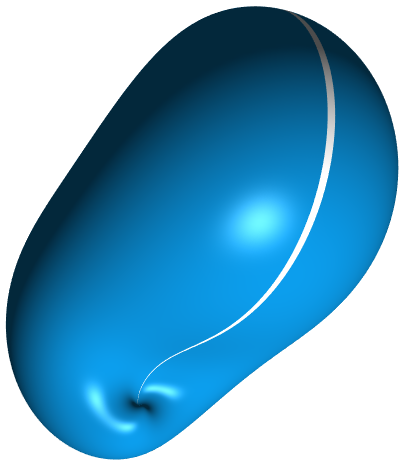} \quad &
\includegraphics[height=28mm]{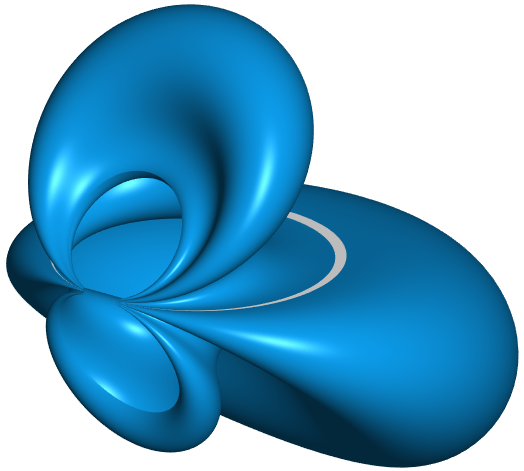}   & \quad
\includegraphics[height=28mm]{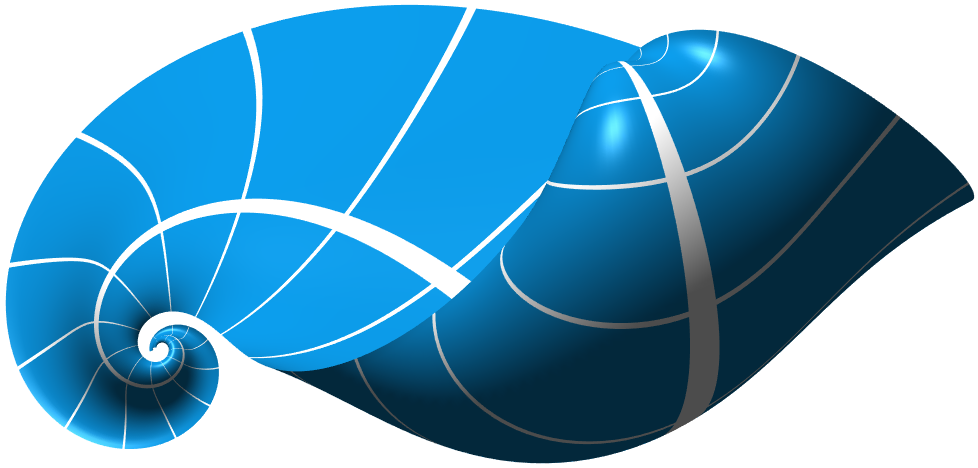}
\end{array}
$
\caption{Willmore surfaces with umbilic lines (Example \ref{example_umbilic}).}
\label{umbilicexamples}
\end{figure}
\begin{example} \label{example_umbilic}
Three examples with lines of umbilics are computed and displayed in Figure \ref{umbilicexamples}.
From left to right, the Bj\"orling data are:
 $(\mu, k, \rho, \gamma_1)=(1+i, 0, 1+i, 1)$,
 $(\mu, k, \rho, \gamma_1)=(0, 0, 0, i)$ and
  $(\mu, k, \rho, \gamma_1)=(\sin u+e^{0.1u}+i(-1+0.5u+\sin u), 0, \cos 3u+i(1+0.3u), 1+0.2u+2i(\sin u +0.6u))$.
\end{example}

\begin{example}  \label{example-n1} Similar to Example \ref{example1},
let us consider a Willmore surface in $\SSS^3$ containing the circle $(\cos u, \sin u, 0,0)$, with a lift $Y=(1,\cos u, \sin u, 0,0)$, $\hat Y = (1/2)(1,-\cos u, -\sin u, 0,0)$ and a free function $\gamma_{12}$. Then similar to discussions in Example \ref{example1}, we have
\[
P_1 = (0,-\sin u, \cos u, 0,0),\ \psi = -E_3\sin\theta +  E_4 \cos\theta, \
P_2 = -E_3\cos\theta - E_4\sin\theta,
\]
 where $\theta$ is any real analytic map $\real \to \real$. We also have
$\rho_1 = -1/2$, $k_2 = \theta^\prime/2$ and $\rho_2 =\gamma_{11}=\mu_2 = k_1 = 0$.
So we can say that all solutions
corresponding to the pair $Y$ and $\hat Y$ above are obtained from a choice of two functions $\theta$ and $\gamma_{12}$ with the boundary potential given by the data:
\[
(\mu,k,\rho, \gamma_1) = (0,i\theta^\prime/2,-1/2,i\gamma_{12}).
\]
Three examples are shown at  Figure \ref{circleexamples2}, the first with no umbilics on the circle,
the second with two umbilics on the circle, and the last with a line of umbilics. The
Bj\"orling data are, in order,
$(\mu, k, \rho, \gamma_1) = (0,i/2, -1/2, i\sin 4u)$,
$(\mu, k, \rho, \gamma_1) = (0, i\sin u, -1/2, i)$,
$(\mu, k, \rho, \gamma_1) = (0, 0, -1/2, i\cos u)$.
\end{example}

\begin{figure}[ht]
\centering
$
\begin{array}{ccc}
\includegraphics[height=34mm]{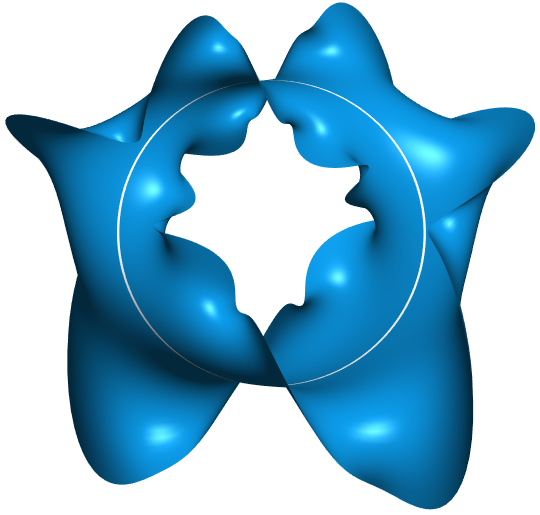} \quad & \quad
\includegraphics[height=34mm]{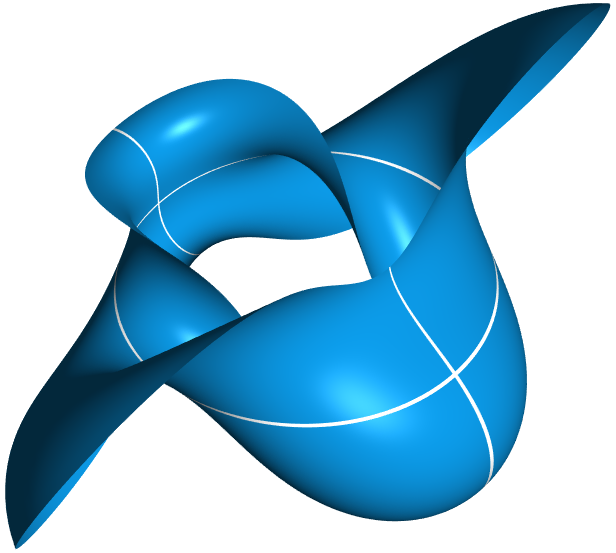} \quad   & \quad
\includegraphics[height=34mm]{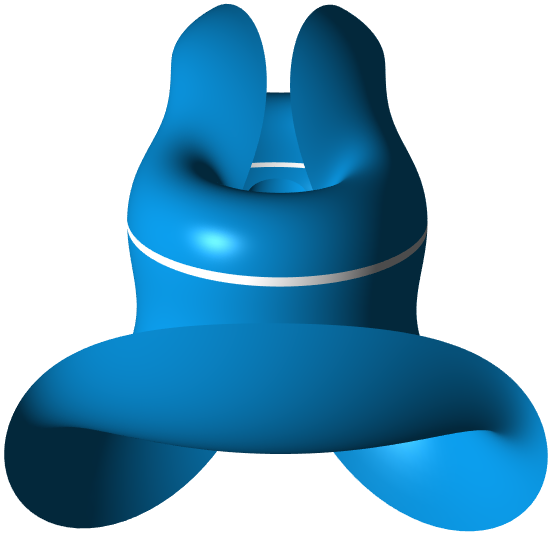}
\end{array}
$
\caption{Willmore surfaces containing a circle (Example \ref{example-n1}).}
\label{circleexamples2}
\end{figure}

\subsection{Generalized Bj\"{o}rling's problem for Willmore surfaces in $\SSS^{n+2}$.}
The above result can be generalized to Willmore surfaces in $\SSS^{n+2}$.
We write down the solution to the generalized Bj\"{o}rling problem for the half-isotropic harmonic maps associated to a Willmore surface in $\SSS^{n+2}$ as follows. In higher codimension, it will be convenient to use $Y_0\wedge\hat Y_0\wedge P_{01}\wedge P_{02}$ to represent  sphere congruences. We refer to \cite{Her1} for the representation of sphere congruences in $\SSS^{n+2}$ (See also \cite{BPP}, \cite{Ma2006} for a discussion of mean curvature spheres).

 \begin{theorem}\label{thm-Bjorling-h0}
Let $\Phi_0=Y_0\wedge\hat Y_0\wedge P_{1}\wedge P_{2}:\mathbb{I}\rightarrow SO^+(1,n+3)/(SO^+(1,3)\times SO(n))$ denote a real analytic sphere congruence from $\mathbb{I}$ to $\SSS^{n+2}$ such that
  \begin{enumerate}
\item $Y_0:\mathbb{I}\rightarrow \mathcal{C}_+^{n+3}\subset\mathbb{R}^{n+4}_1$ is a real analytic curve with arc-parameter $u$ and $[Y_0]$ is an enveloping curve of $\Phi_0$;
\item The real analytic map $\hat{Y}_0:\mathbb{I}\rightarrow \mathcal{C}_+^{n+3}$ satisfies  $\langle Y_0, \hat{Y}_0\rangle=-1$;
\item The real analytic map $\zeta:\mathbb{I}\rightarrow \real^{n+4}_1$ is perpendicular to $\{Y_0,\hat Y_0, P_{1}, P_{2}\}$.
\end{enumerate}

 Then there exists a unique half$-$isotropic harmonic map  $Y\wedge \hat Y :\Sigma\rightarrow SO^+(1,n+3)/(SO^+(1,1)\times SO(n+2))$ and a unique Willmore surface $y=[Y]:\Sigma\rightarrow \SSS^{n+2}$, with conformal Gauss map $\Phi$, $\Sigma$ some simply connected open subset containing $\mathbb{I}$ and $z=u+iv$ a complex coordinate of $\Sigma$, such that:
  \begin{enumerate}
\item  The canonical lift $Y$ of $y$ satisfies
$Y|_{\mathbb{I}}=Y_0$.
\item The map $\hat Y$ satisfies $\hat Y|_{\mathbb{I}}=\hat Y_0,\quad \hat Y_{v}|_{\mathbb{I}}=\zeta \mod\{Y_0,\hat Y_0, P_{1}, P_{2}\}$;
\item  The conformal Gauss map $\Phi$ of $y$ satisfies
$\Phi|_{\mathbb{I}}=\Phi_0$.
\end{enumerate}\end{theorem}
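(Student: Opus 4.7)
The plan is to extend the proof of Theorem \ref{thm-Bjorling-BP-G} to arbitrary codimension by invoking the half$-$isotropic DPW setup of Theorem \ref{thm-holo-potent-half}. First I would complete $\{Y_0,\hat Y_0, P_1, P_2\}$ to a real analytic $SO^+(1,n+3)$-valued frame $F_0$ along $\mathbb I$ by choosing a real analytic orthonormal section $\{\psi_1,\dots,\psi_n\}$ of the orthogonal complement of $\mathrm{span}\{Y_0,\hat Y_0, P_1, P_2\}$, which exists because $\mathbb I$ is contractible. Expanding $F_0^{-1}\,dF_0 = \hat\alpha\,du$ in this frame and decomposing with respect to $\mathfrak g = \mathfrak k \oplus \mathfrak p$ reads off real analytic scalars $\mu_1,\mu_2,\rho_1,\rho_2$, a skew matrix $(b_{jl})$ encoding the normal connection, and $n$-tuples $(k_{1j}),(k_{2j}),(\gamma_{1j})$ that generalize the system \eqref{eq-moving-y-G}. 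The prescribed transverse datum $\zeta$ then supplies the missing $n$ real analytic functions $\gamma_{2j}$: the structure equation for $\hat Y_z$ from Section \ref{section7} shows that the normal component of $\hat Y_z$ equals $2\sum_j \gamma_j \psi_j$, so the Cauchy--Riemann identity $\hat Y_v = i(\hat Y_z - \hat Y_{\bar z})$ forces the normal components of $\hat Y_v|_{\mathbb I}$ to be proportional to $\gamma_{2j}$, and matching with $\zeta$ determines the $\gamma_{2j}$ uniquely. Set $\gamma_j := \gamma_{1j} + i\gamma_{2j}$, $\mu := \mu_1 + i\mu_2$, $\rho := \rho_1 + i\rho_2$, and $k_j := k_{1j} + ik_{2j}$.

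Next I would analytically continue all these functions of $u$ to holomorphic functions of $z = u+iv$ on a simply connected neighborhood $\Sigma$ of $\mathbb I$ and assemble the holomorphic potential $\Xi = (\lambda^{-1}\mathcal A_1 + \mathcal A_0 + \lambda \mathcal A_{-1})\,dz$ in the block form of \eqref{eq-hm-m-c-h}, with $B_1$ the $2\times(n+2)$ matrix whose first two columns are exactly as in Theorem \ref{thm-Bjorling-BP-G} and whose last $n$ columns carry the $\gamma_j$ with the sign pattern of \eqref{eq-hm-m-c-h}, and with $A_2$ the $(n+2)\times(n+2)$ skew matrix carrying $\mu_2,k_{1j},k_{2j}$ and $(b_{jl})$. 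A direct computation gives $B_1 B_1^t = 2\bigl(\sum_j \gamma_j^2\bigr)\mathbf E$, so $\Xi$ satisfies the half$-$isotropic structural constraint of Theorem \ref{thm-holo-potent-half}(2). Integrating $\mathfrak F^{-1}\,d\mathfrak F = \Xi$ with $\mathfrak F(u_0,\lambda)=F_0(u_0,\lambda)$, where $F_0(u,\lambda)\in\Lambda G_\sigma$ is the loopified boundary frame determined by $dF_0(u,\lambda)=F_0(u,\lambda)\hat\alpha_\lambda$ along $\mathbb I$, and using that $\Xi|_{\mathbb I}=\hat\alpha_\lambda$, ODE uniqueness gives $\mathfrak F(u,\lambda) = F_0(u,\lambda)\in\Lambda G_\sigma$ on $\mathbb I$. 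By continuity, $\mathfrak F$ remains in the Iwasawa big cell on an open neighborhood $\mathbb D_0$ of $\mathbb I$, and pointwise Iwasawa decomposition $\mathfrak F = \check F\cdot \check F_+$ (Theorem \ref{thm-iwasawa}) produces an extended frame $\check F = (e_{-1},e_0,e_1,e_2,\psi_1,\dots,\psi_n)$ of a half$-$isotropic harmonic map with $\check F|_{\mathbb I}=F_0|_{\mathbb I}$, from which $Y := (e_{-1}-e_0)/\sqrt 2$, $\hat Y := (e_{-1}+e_0)/\sqrt 2$, and $\Phi := Y\wedge \hat Y\wedge e_1\wedge e_2$ provide the desired Willmore surface and its conformal Gauss map.

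The main obstacle is the bookkeeping needed to verify that the prescribed $\hat Y_v|_{\mathbb I}$ is actually recovered modulo $\mathrm{span}\{Y_0,\hat Y_0,P_1,P_2\}$, since the other boundary conditions $Y|_{\mathbb I}=Y_0$, $\hat Y|_{\mathbb I}=\hat Y_0$ and $\Phi|_{\mathbb I}=\Phi_0$ follow immediately from $\check F|_{\mathbb I}=F_0|_{\mathbb I}$. For this I would use the identity $\gamma_j = \tfrac12\langle \hat Y_z,\psi_j\rangle$, which follows from the structure equation for $\hat Y_z$ in Section \ref{section7} and is exactly the argument used for $n=1$ in the proof of Theorem \ref{thm-Bjorling-BP-G}; combined with the Cauchy--Riemann relation above, the very construction of $\gamma_{2j}$ then forces the normal components of $\hat Y_v|_{\mathbb I}$ to agree with $\zeta$, while the tangential components lie in $\mathrm{span}\{Y_0,\hat Y_0,P_1,P_2\}$ by construction. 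For uniqueness, any half$-$isotropic harmonic map $\tilde Y \wedge \tilde{\hat Y}$ satisfying the three boundary conditions would, through the same structure equations along $\mathbb I$ and the same analytic continuation, yield exactly the same potential $\Xi$; Theorem \ref{thm-holo-potent-half} together with the Iwasawa decomposition then reconstructs the extended frame, and so $\tilde Y$ and $\tilde{\hat Y}$, uniquely up to the prescribed initial value $F_0(u_0,\lambda)$. The Willmore property of $y=[Y]$ is built into Theorem \ref{thm-mc form2}, and $\Phi|_{\mathbb I}=\Phi_0$ is automatic since the mean curvature sphere is $Y\wedge \hat Y \wedge e_1\wedge e_2$ by construction.
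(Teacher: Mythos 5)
Your proposal is correct and follows essentially the same route as the paper: complete $\{Y_0,\hat Y_0,P_1,P_2\}$ with an orthonormal normal frame $\psi_{01},\dots,\psi_{0n}$, read off the coefficients from the structure equations (the higher-codimension analogue of \eqref{eq-moving-y-G}), determine the $\gamma_{j2}$ from $\zeta$ via $\hat Y_v=i(\hat Y_z-\hat Y_{\bar z})$, and then run the boundary-potential/Iwasawa argument of Theorem \ref{thm-Bjorling-BP-G}. The paper leaves these details to the reader, stating only $\gamma_{j2}=\frac{1}{4}\langle\zeta,\psi_{0j}\rangle$; your write-up supplies them along the identical path.
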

\begin{proof} Assume that
the real analytic maps $P_{1},P_{2}:\mathbb{I}\rightarrow S^{n+3}_1$ satisfies
 \[\hbox{$P_1 =Y_{0u} \mod Y_0$, $\{P_{1}, P_{2}\}\perp \{Y_0,\hat Y_0\}$ and $P_{1}\perp P_{2}$},\]
 and $\{\psi_{01}, \dots \psi_{0n}\}$ is a real analytic orthonormal basis of the orthogonal complement of $\{P_{1}, P_{2},  Y_0,\hat Y_0\}$.
The proof follows from the higher co-dimensional
analogue of Theorem \ref{thm-Bjorling-BP-G}, the statement and
proof of which generalize,
replacing $\psi_0$ of Theorem \ref{thm-Bjorling-BP-G} with
$\psi_{01}, \dots \psi_{0n}$, substituting the equations
\begin{equation}\label{eq-eq-moving-y-GH}
\left\{\begin {array}{lllll}
Y_{0u}=-\mu_1Y_0+P_1,\\
\hat{Y}_{0u}=\mu_1\hat{Y}_0+\rho_1P_1+\rho_2P_2+4\sum_j\gamma_{j1}\psi_{0j},\\
P_{1u}=\mu_2P_2+2\sum_{j=1}^nk_{j1}\psi_{0j}+\hat{Y}_0+\rho_1 Y_0,\\
P_{2u}=-\mu_2P_1-2\sum_{j=1}^nk_{j2}\psi_{0j}+\rho_2 Y_0,\\
\psi_{0ju}=\sum_{l=1}^n b_{jl1}\psi_{0l}-2k_{j1}P_1+2k_{j2}P_2+4\gamma_{j1} Y,\ 1\leq j \leq n,\\
\end {array}\right.
\end{equation}
for the equations \eqref{eq-moving-y-G},
and writing down the corresponding Maurer-Cartan form for the associated
frame, which has the same form as \eqref{eq-hm-m-c-h}. Note in this case $\gamma_{j2}$ is given by $\zeta$, i.e., $\gamma_{j2}=\frac{1}{4}\langle\zeta, \psi_{0j}\rangle$.
We leave these details to the interested reader.
\end{proof}

To adapt Theorem \ref{thm-Bjorling-h0} to the isotropic case, we need only add the
assumption that $\zeta$ has the same length as $\sum_j\gamma_{j1}\psi_{0j}$ in \eqref{eq-eq-moving-y-GH},
 which is to ensure $\hat Y$ is also conformal in $z$. This is equivalent to prescribing the  mean curvature sphere of $\hat Y$
(in addition to that of $Y$).  See the proof of the following theorem for the details. Note  that in general these two mean curvature spheres are different, which is also the geometric reason  why  two mean curvature spheres are needed to solve the Bjorling problem
in the general case.
 \begin{theorem}\label{thm-Bjorling-h1}
Let $\Phi_0=Y_0\wedge\hat Y_0\wedge P_{1}\wedge P_{2},\ \hat\Phi_0=\hat Y_0\wedge Y_0\wedge \hat{P}_{1}\wedge \hat{P}_{2}:\mathbb{I}\rightarrow SO^+(1,n+3)/(SO^+(1,3)\times SO(n))$ denote two real analytic sphere congruences from $\mathbb{I}$ to $\SSS^{n+2}$ such that
  \begin{enumerate}
\item $Y_0:\mathbb{I}\rightarrow \mathcal{C}_+^{n+3}\subset\mathbb{R}^{n+4}_1$ is a real analytic curve with arc-parameter $u$ and $[Y_0]$ is an enveloping curve of $\Phi_0$;
\item The real analytic map $\hat{Y}_0:\mathbb{I}\rightarrow \mathcal{C}_+^{n+3}$ satisfies  $\langle Y_0, \hat{Y}_0\rangle=-1$. And it is an enveloping curve of $\hat\Phi_0$ at the points it is immersed.
 \end{enumerate}

 Then there exists a unique isotropic harmonic map  $Y\wedge \hat Y :\Sigma\rightarrow SO^+(1,n+3)/(SO^+(1,1)\times SO(n+2))$ and a unique Willmore surface $y=[Y]:\Sigma\rightarrow \SSS^{n+2}$, with   an adjoint transform $\hat y=[\hat Y]$, $\Sigma$ some simply connected open subset containing $\mathbb{I}$ and $z=u+iv$ a complex coordinate of $\Sigma$, such that:
  \begin{enumerate}
\item  The canonical lift $Y$ of $y$ satisfies
$Y|_{\mathbb{I}}=Y_0$;
\item The map $\hat Y$ satisfies $\hat Y|_{\mathbb{I}}=\hat Y_0$;
\item  The conformal Gauss map $\Phi$, $\hat \Phi$ of $y$ and $\hat y$ satisfies $\Phi|_{\mathbb{I}}=\Phi_0$, $\hat\Phi|_{\mathbb{I}}=\hat\Phi_0$.
\end{enumerate}\end{theorem}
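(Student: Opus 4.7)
My strategy is to extend the argument of Theorem \ref{thm-Bjorling-h0} to the isotropic setting by using the second sphere congruence $\hat\Phi_0$ to supply the additional boundary data needed to force the isotropy condition $B_1 B_1^t = 0$. As in that theorem, I first extract from $\Phi_0 = Y_0\wedge\hat Y_0\wedge P_1\wedge P_2$ the tangent frame along $\mathbb I$ and choose an orthonormal frame $\psi_{01},\dots,\psi_{0n}$ for the orthogonal complement of $\Phi_0$. The structure equations \eqref{eq-eq-moving-y-GH} applied to $Y_0$ and its $u$-derivatives then determine the real analytic coefficients $\mu_1,\mu_2,\rho_1,\rho_2,k_{j1},k_{j2}$ and the ``real parts'' $\gamma_{j1}$ of the isotropy coefficients, exactly as in the half-isotropic case.

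The new ingredient is the extraction of the imaginary parts $\gamma_{j2}$ from $\hat\Phi_0$. Since $\hat Y$ is to be conformally immersed in $z=u+iv$ with its own mean curvature sphere $\hat V$ whose restriction to $\mathbb I$ is $\hat\Phi_0$, the vector $\hat Y_v|_{\mathbb I}$ is forced to lie in $\hat\Phi_0 = \operatorname{span}\{\hat Y_0, Y_0, \hat P_1, \hat P_2\}$. This determines $\hat Y_v|_{\mathbb I}$ modulo tangent directions to the curve, hence its projection on each $\psi_{0j}$, and I set $\gamma_{j2} := \tfrac14 \langle \hat Y_v|_{\mathbb I},\psi_{0j}\rangle$. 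With the full $\gamma_j = \gamma_{j1} + i \gamma_{j2}$ in hand, one assembles a boundary potential $\Xi$ of the same block form as in Theorem \ref{thm-Bjorling-BP-G}, with the third block of $B_1$ generalized to the $n$ entries involving $\gamma_j$, and feeds it into the DPW procedure of Theorem \ref{thm-holo-potent-half} to obtain an extended frame $\check F$ on a neighborhood $\Sigma\supset \mathbb I$.

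The core verification is that the resulting harmonic map $Y\wedge \hat Y$ is not merely half-isotropic but isotropic. By the shape of $B_1$, the condition $B_1 B_1^t = 0$ is equivalent to $\sum_j \gamma_j^2 = 0$, which splits into $\sum_j \gamma_{j1}^2 = \sum_j \gamma_{j2}^2$ and $\sum_j \gamma_{j1}\gamma_{j2} = 0$. Both should follow from the hypothesis that $\hat Y_0$ is an enveloping curve of the second sphere congruence $\hat\Phi_0$: the first is the norm equality $|\hat Y_u|^2 = |\hat Y_v|^2$ along $\mathbb I$, and the second is the orthogonality $\langle \hat Y_u, \hat Y_v\rangle = 0$, both being the conformality of $\hat Y$ at $z=u+iv$ that a genuine mean curvature sphere encodes. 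Once isotropy is established, Theorem 4.8 of \cite{Ma2006} (as cited in the text) guarantees that $[Y]$ and $[\hat Y]$ form a Willmore surface with adjoint transform. Uniqueness then follows from the uniqueness built into the Birkhoff/Iwasawa decompositions, exactly as in the proof of Theorem \ref{thm-Bjorling-BP-G}.

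The main obstacle I anticipate is the norm-equality half of the isotropy check. Unlike $Y_0$, the curve $\hat Y_0$ has not been normalized to be a canonical lift with respect to the shared coordinate $z$, so one must carefully track the conformal factor relating $\hat Y$ to its own canonical lift and verify that the sphere congruence $\hat\Phi_0$ prescribed in the hypothesis is consistent, along $\mathbb I$, with using $z = u + iv$ (tied to the arc-length $u$ of $Y_0$) as a conformal parameter for $\hat Y$. A secondary subtlety is the conjugation-invariance of the isotropic form of $B_1$, which must survive the passage through the Iwasawa factor; this is already built into \eqref{eq-conjugation invariant} and should cause no trouble.
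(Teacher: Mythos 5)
Your proposal is correct and follows essentially the same route as the paper: the paper also reduces to Theorem \ref{thm-Bjorling-h0} by using $\hat\Phi_0$ to pin down $\zeta$ (equivalently the $\gamma_{j2}$), writing $\hat Y_{0u}=a\hat P_1$ and $\zeta=a\hat P_2$ modulo $\{Y_0,\hat Y_0,P_1,P_2\}$, so that the isotropy condition $\sum_j\gamma_j^2=0$ is exactly the conformality $\langle\hat Y_z,\hat Y_z\rangle=0$ encoded by the second mean curvature sphere. Your spelled-out verification of the norm equality and orthogonality is precisely what the paper's choice of $\zeta$ builds in by construction.
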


\begin{proof}
Since $\hat Y_0$ is an enveloping curve of $\hat\Phi_0$, $\hat Y_{0u}\in Span\{\hat Y_0, Y_0, \hat P_{1}, \hat P_{2}\}$. So we can assume that $\hat Y_{0u}= a\hat P_1$ and $\zeta= a\hat P_2 \mod \{Y_0,\hat Y_0, P_{1}, P_{2}\}$. Applying Theorem \ref{thm-Bjorling-h0}, we finish the proof.
\end{proof}

Restricting to the case of  a pair of dual S-Willmore surfaces in $\SSS^{n+2}$, we obtain the following

 \begin{theorem}\label{thm-Bjorling-h}
Let $\Phi_0:\mathbb{I}\rightarrow SO^+(1,n+3)/(SO^+(1,3)\times SO(n))$ denote a non-constant real analytic sphere congruence from $\mathbb{I}$ to $\SSS^{n+2}$, with enveloping curves $[Y_0]$ and $[\hat{Y}_0]$ such that $\langle Y_0, Y_0 \rangle=\langle \hat{Y}_0, \hat{Y}_0\rangle=0$, $\langle Y_0, \hat{Y}_0\rangle=-1$, and $u$ is  the arc-length parameter of $Y_0$. Then there exists a unique pair of dual (S-Willmore) Willmore surfaces $y,\hat{y}:\Sigma\rightarrow \SSS^{n+2}$, with $\Sigma$ some open subset containing $\mathbb{I}$, such that
  \begin{enumerate}
\item  There exist lifts $Y$, $\hat Y$ of $y,\hat y$ such that
$Y|_{\mathbb{I}}=Y_0$,  $\hat Y|_{\mathbb{I}}=\hat Y_0$;
\item  The conformal Gauss map $\Phi$ of $y$  satisfies $\Phi|_{\mathbb{I}}=\Phi_0$.
\end{enumerate}\end{theorem}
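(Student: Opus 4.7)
The plan is to deduce Theorem~\ref{thm-Bjorling-h} directly from Theorem~\ref{thm-Bjorling-h1} by exploiting the defining feature of an S-Willmore pair: the two dual surfaces share a common mean curvature sphere. Accordingly, the single sphere congruence $\Phi_0$ prescribed here will play the role of both $\Phi_0$ and $\hat\Phi_0$ in Theorem~\ref{thm-Bjorling-h1}.

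First I set $\hat\Phi_0:=\Phi_0$. The hypotheses of Theorem~\ref{thm-Bjorling-h1} require $[Y_0]$ to be an enveloping curve of $\Phi_0$ and $[\hat Y_0]$ to be an enveloping curve of $\hat\Phi_0$; both are built into the present assumption that $\Phi_0$ has $[Y_0]$ and $[\hat Y_0]$ as its two enveloping curves, and the normalization $\langle Y_0,\hat Y_0\rangle=-1$ with $Y_0,\hat Y_0$ null is already assumed. Applying Theorem~\ref{thm-Bjorling-h1} then produces a unique isotropic harmonic map $Y\wedge\hat Y:\Sigma\to SO^+(1,n+3)/(SO^+(1,1)\times SO(n+2))$, a Willmore surface $y=[Y]$ with adjoint transform $\hat y=[\hat Y]$, and conformal Gauss maps $\Phi,\hat\Phi$ satisfying $\Phi|_{\mathbb{I}}=\hat\Phi|_{\mathbb{I}}=\Phi_0$.

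The remaining task is to upgrade the adjoint pair to a genuinely dual S-Willmore pair, i.e.\ to prove $\Phi\equiv\hat\Phi$ on all of $\Sigma$. I would do this at the level of the boundary potential by examining the coefficient functions $\gamma_j=\gamma_{j1}+i\gamma_{j2}$ appearing in the last $n$ columns of $B_1$ in the higher-codimension analogue of Theorem~\ref{thm-Bjorling-BP-G}. The enveloping condition $\hat Y_{0u}\in\mathrm{Span}\{Y_0,\hat Y_0,P_1,P_2\}$, together with \eqref{eq-eq-moving-y-GH}, forces $\gamma_{j1}\equiv 0$ on $\mathbb I$; the choice $\hat\Phi_0=\Phi_0$ places the transverse datum $\zeta$ of Theorem~\ref{thm-Bjorling-h0} inside $\mathrm{Span}\{Y_0,\hat Y_0,P_1,P_2\}$, and since $\zeta$ is by construction perpendicular to that span it must vanish, giving $\gamma_{j2}\equiv 0$ on $\mathbb I$. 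The holomorphic extension defining the boundary potential then makes every $\gamma_j$ vanish identically on $\Sigma$, so $B_1$ has rank one everywhere, which by the characterization recorded earlier (the lemma asserting that $[Y]$ and $[\hat Y]$ are a dual (S-)Willmore pair iff $\mathrm{rank}(B_1)=1$) is precisely the dual S-Willmore condition.

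Uniqueness follows from the uniqueness clause of Theorem~\ref{thm-Bjorling-h1}: any dual S-Willmore pair $(Y',\hat Y')$ extending the given data automatically has $\hat\Phi'=\Phi'$ on $\Sigma$ and hence satisfies the hypotheses of Theorem~\ref{thm-Bjorling-h1} with $\hat\Phi_0=\Phi_0$, forcing agreement with the pair constructed above. The main technical obstacle is the propagation step: one must verify that the vanishing of $\gamma_{j1}$ and $\gamma_{j2}$ on $\mathbb I$ really does persist under the holomorphic extension built into the boundary potential, so that the adjoint-transform construction of Theorem~\ref{thm-Bjorling-h1} automatically realizes the stricter S-Willmore duality once the two prescribed sphere congruences are made to coincide.
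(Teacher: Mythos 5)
Your proposal is correct and follows essentially the same route as the paper, which states this theorem without separate proof as the restriction of Theorem~\ref{thm-Bjorling-h1} to the case $\hat\Phi_0=\Phi_0$: the enveloping condition on $[\hat Y_0]$ kills $\gamma_{j1}$ and the coincidence of the two sphere congruences kills $\zeta$ and hence $\gamma_{j2}$, so $\mathrm{rank}(B_1)=1$ and the rank-one characterization gives the dual S-Willmore pair. The ``propagation step'' you flag is not actually an obstacle, since the holomorphic extension of the identically zero real-analytic function is identically zero.
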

\vspace{2 mm}

{\small{\bf Acknowledgements}\ \ We would like to thank the referee for helpful suggestions
that improved the results of this paper. The second named author was supported by the NSFC Project No. 11571255.}

\def\refname{Reference}

{\small\ \ \vspace{2mm}

David Brander

Department of Applied Mathematics and Computer Science,
Technical University of Denmark,
Matematiktorvet, Bdg. 303 B,
DK-2800 Kgs. Lyngby, DENMARK.

{\em E-mail address}: dbra@dtu.dk\vspace{2mm}

Peng Wang

Department of Mathematics, Tongji University, Siping Road 1239, Shanghai, 200092, P. R. China.

{\em E-mail address}: {netwangpeng@tongji.edu.cn}
 }

\end{document}